\newtheorem{corollary}{Corollary}
\newtheorem{lemma}{Lemma}
\newcommand{\A}{\mathbf{A}}
\newcommand{\R}[2]{\mathbf{R}(#1,#2)}
\newcommand{\RH}[1]{\mathbf{R}(#1)}
\newcommand{\He}{\mathbf{H}}
\newcommand{\MH}{\mathbb{MH}}
\newcommand{\MN}{\mathbb{MN}}
\newcommand{\var}[1]{\mathsf{#1}}
\newcommand{\To}{\Rightarrow}
\newcommand{\RN}[2]{R(#1,#2)}
\newcommand{\Id}{\operatorname{Id}}
\newcommand{\X}[1]{\operatorname{X}(#1)}
\newcommand{\da}[1]{\operatorname{\downarrow}\hspace{-0.5mm}{#1}}
\newcommand{\ua}[1]{\operatorname{\uparrow}\hspace{-0.5mm}{#1}}
\newcommand{\uaY}[1]{\operatorname{\uparrow_Y}\hspace{-0.5mm}{#1}}
\newcommand{\uaX}[1]{\operatorname{\uparrow_X}\hspace{-0.5mm}{#1}}
\theoremstyle{thmstyleone}%
\newtheorem{theorem}{Theorem}
\newtheorem{proposition}[theorem]{Proposition}%
\theoremstyle{thmstyletwo}%
\newtheorem{example}{Example}%
\newtheorem{remark}{Remark}%
\theoremstyle{thmstylethree}%
\newtheorem{definition}{Definition}%
\begin{document}

\title[Twist-structures isomorphic to modal Nelson lattices]{Twist-structures isomorphic to modal Nelson lattices}

\author*[1,2]{\fnm{Paula} \sur{Mench\'on}}\email{paulamenchon@gmail.com}

\author[3,4]{\fnm{Ricardo O.} \sur{Rodriguez}}\email{ricardo@dc.uba.ar}

\affil*[1]{\orgdiv{Department of Logic}, \orgname{Nicolaus Copernicus University in Toru\'n}, \orgaddress{\country{Poland}}}

\affil[2]{\orgname{CONICET, Universidad Nacional del Centro de la Provincia de Buenos Aires}, \orgaddress{\city{Tandil}, \country{Argentina}}}

\affil[3]{\orgname{UBA-FCEyN. Computer Science Department}, \orgaddress{\country{Argentina}}}

\affil[4]{\orgname{UBA-CONICET. Computer Science Institute}, \orgaddress{\country{Argentina}}}


\abstract{In this paper, we introduce a new variety of Heyting algebras with two unary modal operators that are not interdefinable but satisfy the weakest condition necessary to define modal operators on Nelson lattices. To achieve this, we utilize the representation of Nelson lattices as twist structures over Heyting algebras and establish a categorical equivalence. Finally, we develop a topological duality for this new variety and apply it to derive a topological duality for modal Nelson lattices.}

\keywords{Nelson lattices, modal Heyting algebras, modal logic, Twist-structures}



\maketitle

\section{Introduction}\label{sec1}

It is well known that non-classical logics develop in two main directions: logics with additional operators (such as modal logics, temporal logics, and epistemic logics) and logics with non-classical implications (essentially, those in which $\varphi \to \psi$ is not equivalent to $\neg \varphi \vee \psi$). Our goal is to combine both approaches by taking an underlying logic with a non-classical implication and extending it with modal operators. In particular, since we would like the modal notions of necessity and possibility to be interdefinable, we will consider an underlying non-classical logic that accepts the law of double negation ($\neg \neg \varphi \to \varphi$) while rejecting the law of excluded middle ($\varphi \vee \neg \varphi$). This corresponds to constructive logic with strong negation, originally suggested by D. Nelson~\cite{Nelson49} and A. A. Markov~\cite{Markov50}. From the point of view of this kind of constructive logic, refuting a sentence $\varphi$ by reduction ad absurdum is not equivalent to refuting it by constructing a counterexample of $\varphi$. The first to formalize this idea was Vorob'ev, who provided an axiomatization for this logic~\cite{Vorobev52}. However, these axioms do not make the connection between strong negation and refutations by means of counterexamples immediately evident. Nevertheless, Vakarelov, in~\cite{Vak77}, introduced a special class of lattices whose elements and operations can be interpreted in a way very close to the intuitive notion of a counterexample. This special class of lattices is given by a twist construction in the sense of Kalman~\cite{Kalman58}. Currently, it is known as N3-lattices, and it was also studied independently by Fidel in \cite{Fidel78}. Moreover, a full representation— in fact, a categorical equivalence—was finally obtained by Sendlewski in \cite{Sendlewski}. We follow this line of research and work on one of the most challenging research trends in non-classical logic: the attempt to combine different non-classical approaches, in our case, Nelson logic and modal logic. This combination provides the ability to handle modal notions such as belief, knowledge, and obligations while interacting with other aspects of reasoning that are best addressed using many-valued logics, such as vagueness, incompleteness, and uncertainty. The study we introduce could be particularly relevant from the perspective of Theoretical Computer Science and Artificial Intelligence.

In order to achieve our goal, we first introduce an extension of the well-known twist structure construction for Nelson residuated lattices to the modal setting. The significance of this construction has grown in recent years in the study of algebras related to non-classical logics (see \cite{Busaniche, Galatos2004, RiviOno2014}). Unlike previous approaches in the literature, our proposed extension does not impose monotonicity with respect to modal operators (see \cite{JanRivi2014}).

This article is organized as follows. In Section \ref{sec2}, we introduce the fundamental concepts of the twist construction for Nelson lattices. We also present a detailed review of the known results regarding this structure, which will serve as the foundation for the subsequent sections of the paper. In Section \ref{sec3}, we introduce the variety of modal Heyting algebras, which corresponds to the algebraic semantics of the Hilbert system $\textbf{IE}_3$ presented in \cite[Section 4.2]{Oli2020}. These algebras contain two modal operators that are not interdefinable. In Section \ref{sec4}, we introduce the class of modal Nelson lattices and show that every modal Nelson lattice can be represented as a twist product over a modal Heyting algebra. We then extend this result to establish a categorical equivalence. Using this representation, in Section \ref{sec5}, we study some important subvarieties of modal Nelson lattices. Finally, in Section \ref{sec6}, we establish a topological duality for modal Heyting algebras and use it to derive a corresponding topological duality for modal Nelson lattices.

\section{Preliminaries}\label{sec2}
An algebra $\A = \langle A, \wedge, \vee, *, \To, \top, \bot \rangle$ is called a \emph{residuated lattice} if and only if the following conditions hold:
\begin{enumerate}
    \item The reduct $\langle A, \wedge, \vee, \top, \bot \rangle$ is a bounded lattice with a maximum element $\top$ and a minimum element $\bot$ (with the order denoted by $\leq$).
    \item The reduct $\langle A, *, \top \rangle$ forms a commutative monoid.
    \item The \emph{fusion} operation $*$ (sometimes referred to as the \emph{intensional conjunction} or \emph{strong conjunction}) is residuated, with $\To$ being its residual. That is, for all $a, b, c \in A$, the following condition holds:
    \begin{equation}
    a * b \leq c \iff b\leq a \To c.
    \label{res}
    \end{equation}
\end{enumerate}
In the literature, these lattices are also well-known under other names, such as integral commutative residuated monoids and $\sf{FL_{ew}}$-algebras~\cite{Hoh95, JipTsi02, Ono03}. It is worth pointing out that the class of residuated lattices, denoted $\var{RL}$, is a variety. 

If $a$ is an element of a residuated lattice $\A$, we define $a^1 = a$ and for each $n \geq 1$, $a^{n+1} = a^n * a$. A derived unary operation $\sim$ is defined by $\sim a = a \To \bot$. As usual, this operation is called the negation operation, and an element $a$ satisfying $a = \sim a$ is called a negation fixed point. A residuated lattice is called involutive if it satisfies the double negation equation:
$$ a = \sim \sim a. $$

A \emph{Nelson residuated lattice}, or simply \emph{Nelson lattice} (N3), is an involutive residuated lattice satisfying:
\[
((a^2 \To b) \wedge ((\sim b)^2 \To \sim a)) \To (a \To b) = \top.
\]

In \cite[Theorem 2.2]{Busaniche}, it is proved that every Nelson lattice $\mathbf{A}$ satisfies \emph{3-potency}, i.e., for all $a \in A$,  $a^3 = a^2$. As a consequence of this result, the following corollary can be obtained: 
\begin{corollary} \label{aux1}
Let $\mathbf{A}$ be a Nelson lattice. Then, for all $a, b \in A$:  
 \begin{itemize}
     \item[(a)] If $a^2 \To b = \top$, then $a^2 \To b^2 = \top$.
     \item[(b)] $(a * b)^2 = (a \wedge b)^2$.  
 \end{itemize}
 \end{corollary}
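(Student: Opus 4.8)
The plan is to translate the arrow-identities into order inequalities and then exploit 3-potency to absorb the superfluous powers. The three facts I will lean on throughout are standard for residuated lattices: that $x \To y = \top$ holds precisely when $x \leq y$ (immediate from the residuation condition \eqref{res} together with $\top$ being the monoidal unit, since $x \leq y \iff x * \top \leq y \iff \top \leq x \To y$); that fusion is order-preserving in each argument; and that these lattices are integral, so $a * b \leq a \wedge b$ (because $b \leq \top$ gives $a * b \leq a * \top = a$, and symmetrically $a * b \leq b$). On top of these I will use the 3-potency identity $a^3 = a^2$, which immediately yields $a^n = a^2$ for every $n \geq 2$; in particular $a^4 = a^2$.

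For part (a), I would first rewrite the hypothesis $a^2 \To b = \top$ as the inequality $a^2 \leq b$. Multiplying this inequality by itself and using monotonicity of $*$ gives $a^2 * a^2 \leq b * b$, that is $a^4 \leq b^2$. Now 3-potency collapses the left-hand side, $a^4 = a^2$, so $a^2 \leq b^2$, which is exactly the claim $a^2 \To b^2 = \top$.

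For part (b), I would prove the two inequalities separately. One direction is purely order-theoretic: integrality gives $a * b \leq a \wedge b$, so monotonicity of $*$ yields $(a * b)^2 \leq (a \wedge b)^2$. For the reverse inequality, from $a \wedge b \leq a$ and $a \wedge b \leq b$ I obtain $(a \wedge b)^2 = (a \wedge b) * (a \wedge b) \leq a * b$; squaring this inequality produces $(a \wedge b)^4 \leq (a * b)^2$, and 3-potency turns the left-hand side into $(a \wedge b)^2$, giving $(a \wedge b)^2 \leq (a * b)^2$. Combining the two inclusions yields the desired equality.

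The only genuinely non-formal step, and the reason the corollary is stated right after the 3-potency result, is recognizing that the extra factors produced when squaring an inequality must be reabsorbed via $a^4 = a^2$; once that observation is in place, everything else is routine bookkeeping with residuation, monotonicity, and integrality, so I do not anticipate any real obstacle.
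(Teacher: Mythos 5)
Your proof is correct and follows exactly the route the paper intends: the corollary is stated as an immediate consequence of the 3-potency theorem of \cite{Busaniche}, and your derivation via residuation ($x \To y = \top$ iff $x \leq y$), monotonicity of $*$, integrality, and the collapse $a^4 = a^2$ is the standard way to fill in that argument. No gaps.
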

This result will become important in our presentation.

\subsection{Representation of Nelson lattices as twist-structures over Heyting algebras}

A \emph{Heyting algebra} is a  residuated lattice
$\He =\langle H,\wedge,\vee,\rightharpoonup,\top,\bot\rangle$  in which the monoidal operation coincides with the meet, i.e., $a * b = a \wedge b$. A non-empty subset $F \subseteq H$ is called a \emph{filter} if it is upward closed and closed under finite meets. In a Heyting algebra $\He$, filters and congruences are in bijective correspondence. For each congruence $\theta$ on $\He$, the set $F_\theta = \{a \in H : (a,1) \in \theta\}$ is a filter of $\He$. Conversely, for each filter $F \subseteq H$, the relation $\theta_F = \{(a,b) \in H \times H : (a \to b) \wedge (b \to a) \in F\}$ is a congruence on $\He$. Moreover, these assignments are mutually inverse, thus yielding an isomorphism between the lattice of filters and the lattice of congruences of $\He$. A filter $F$ of $\He$ is said to be \emph{Boolean} if the quotient $\He/\theta_F$ is a Boolean algebra.

Let $\He=\langle H,\wedge,\vee,\rightharpoonup,\top,\bot\rangle$ be a Heyting algebra, and define the pseudocomplement of $a \in H$ by $-a := a \rightharpoonup \bot$.
An element $a \in H$ is said to be \emph{dense} if $--a = \top$, or equivalently, if $-a = \bot$. We denote by $D(\He)$ the filter of dense elements of $\He$, that is,  $D(\He) = \{a \in H : -a =\bot\}$.  It is well known and easy to verify that a filter $F$ of $\He$ is Boolean if and only if $D(\He) \subseteq F$. The interested reader can find more details on Heyting algebras in \cite{Ruth65}. 

Now, we introduce the usual connection between Heyting algebras and Nelson residuated lattices.

\begin{theorem}[Sendlewski + Theorem 3.1 in \cite{Busaniche}] \label{TwistRepresentation} 
Given a Heyting algebra $\He$ 
and a boolean filter $F$ of $\He$, define:
\[
\RN{\He}{F}:=\{(x,y)\in H\times H \mid x\wedge y=\bot \text{ and } x\vee y\in F\}.
\] 
Then, the following hold:
\begin{enumerate}
    \item $\R{\He}{F}=\langle \RN{\He}{F},\wedge,\vee, *, \To,\bot,\top\rangle$ forms a Nelson lattice, where operations are defined as:
    \begin{itemize}
        \item $(x,y)\vee(s,t)=(x\vee s,y\wedge t)$, 
        \item $(x,y)\wedge(s,t)=(x\wedge s,y\vee t)$,
        \item $(x,y)*(s,t)=(x\wedge s,(x \rightharpoonup t)\wedge (s\rightharpoonup y))$,
        \item $(x,y)\To (s,t)=((x\rightharpoonup s)\wedge(t\rightharpoonup y),x\wedge t)$, 
        \item $\top=(\top,\bot)$ and $\bot=(\bot,\top)$.
    \end{itemize}
    
    \item The negation operations are given by:
    \begin{itemize}
        \item $\sim (x,y)=(y,x)$ (strong negation),
        \item $\neg (x,y) = (-x, x)$ (weak negation).
    \end{itemize}

    \item For every Nelson lattice $\A$, there exists a unique (up to isomorphism) Heyting algebra $\He_\A$ and a unique boolean filter $F_\A$ of $\He_\A$ such that $\A$ is isomorphic to $\R{\He_\A}{F_\A}$.

    \item If $F_1,F_2$ are boolean filters of $\He$, then $\R{\He}{F_1}$ is a subalgebra of $\R{\He}{F_2}$ if and only if $F_1\subseteq F_2$.

    \item If $\mathcal{V}$ is a variety of Nelson lattices, then the class $\mathcal{H}^\mathcal{V} :=\{\He_\A \mid \A\in\mathcal{V}\}$ forms a variety of Heyting algebras.
\end{enumerate}
\end{theorem}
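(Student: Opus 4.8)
The plan is to dispatch the five claims in order, noting that (1), (2) and (4) are verifications inside the Heyting arithmetic of $\He$, that (3) is Sendlewski's representation (refined by Theorem 3.1 of Busaniche), and that (5) is a closure argument resting on (3). For (1) I would first check that $\RN{\He}{F}$ is closed under each operation: writing $a=(x,y)$, $b=(s,t)$ with $x\wedge y=\bot=s\wedge t$ and $x\vee y,\,s\vee t\in F$, disjointness of the two coordinates of each output is a short computation (for $*$, the Heyting fact $x\wedge(x\rightharpoonup t)\le t$ gives $(x\wedge s)\wedge(x\rightharpoonup t)\wedge(s\rightharpoonup y)\le s\wedge t=\bot$), while membership of the join in $F$ uses that $F$ is a filter together with $D(\He)\subseteq F$. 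I would then verify the residuated-lattice identities \eqref{eq:RL1}--\eqref{eq:RL3}: the lattice reduct is componentwise, $\top=(\top,\bot)$ is the monoid unit because $y\le -x$ collapses $(x,y)*(\top,\bot)$ to $(x,y)$, and residuation \eqref{res} unwinds coordinatewise. Involutivity is immediate from $\sim\sim(x,y)=(x,y)$, while $3$-potency follows by direct computation using $(x,y)^2=(x,-x)$ (which rests on the identity $x\rightharpoonup y=-x$, valid whenever $x\wedge y=\bot$); the Nelson identity then reduces to a finite check in $\He$. For (2), strong negation is $\sim(x,y)=(x,y)\To(\bot,\top)$, which evaluates to $(-x\wedge y,\,x)=(y,x)$ because $y\le -x$, and the formula $\neg(x,y)=(-x,x)$ follows by the analogous unwinding of the weak negation, again invoking $x\rightharpoonup y=-x$ on disjoint coordinates.

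The substance is (3). Following Sendlewski, I would recover the Heyting algebra intrinsically from the $3$-potent structure of $\A$: set $a\approx b$ iff $a^2=b^2$, and let $\He_\A$ be $A/{\approx}$ equipped with the meet and join induced by $\wedge$ and $\vee$ and with implication induced by $a,b\mapsto a^2\To b^2$. The key point is that, on the canonical representatives $a^2$, the operation $\To$ behaves exactly like a Heyting implication, so these induced operations are well defined on $\approx$-classes and make $\He_\A$ a Heyting algebra; here $3$-potency and Corollary \ref{aux1} do the real work. I would then set $F_\A:=\{[a\vee\sim a]:a\in A\}$, check it is a Boolean filter, and verify that $a\mapsto([a],[\sim a])$ is an isomorphism of $\A$ onto $\R{\He_\A}{F_\A}$: it lands in $\RN{\He_\A}{F_\A}$ because $a\wedge\sim a$ and $a\vee\sim a$ have the required classes, it is injective by involutivity, and surjectivity and preservation of the operations follow from the formulas in (1). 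Uniqueness is forced because $\He_\A$ is exactly the quotient under $a\mapsto a^2$ and $F_\A$ is determined as the image of $\{a\vee\sim a\}$. I expect the well-definedness of the induced implication, and the verification that $\He_\A$ is genuinely a Heyting algebra, to be the main obstacle.

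Claim (4) is elementary: if $F_1\subseteq F_2$ then $\RN{\He}{F_1}\subseteq\RN{\He}{F_2}$ and the operations coincide, being defined identically from $\He$, so one inclusion gives a subalgebra; conversely, $(a,\bot)\in\RN{\He}{F_1}$ for each $a\in F_1$ forces $a=a\vee\bot\in F_2$, whence $F_1\subseteq F_2$. For (5) I would promote (3) to the functorial statement that $\A\mapsto\He_\A$ underlies a categorical equivalence (as developed in the later sections), so that it preserves products and sends subalgebras and homomorphic images of Nelson lattices to subalgebras and homomorphic images of Heyting algebras, and conversely. Closure of a variety $\mathcal{V}$ under $\mathsf{H}$, $\mathsf{S}$, $\mathsf{P}$ then transfers to $\mathcal{H}^{\mathcal{V}}$, which is consequently a variety by Birkhoff's theorem.
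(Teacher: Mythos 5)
Your proposal is correct and matches, in essence, the approach behind the paper's treatment: the paper does not actually prove Theorem \ref{TwistRepresentation} (it imports it from Sendlewski and from Theorem 3.1 of Busaniche--Cignoli), but the machinery it does recap---the congruence $a^2=b^2$, the algebra $\He^*$ of Remark \ref{SpecialHeytingAlgebra}, the Boolean filter $F^\equiv$ of classes of $a\vee\sim a$, and the isomorphism $a\mapsto([a],[\sim a])$ of Theorem \ref{MappingNelsonHeyting}---is exactly your construction in part (3). Your remaining verifications ((1) and (2) by Heyting-algebra computations using $x\rightharpoonup y=-x$ on disjoint pairs and Booleanness of the quotient $\He/F$, (4) by the $(a,\bot)$ trick, and (5) by transferring HSP-closure through the correspondence) follow the same standard lines as the cited sources.
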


 From the previous theorem, it follows immediately that:
 \[
    (x,y)^2=(x,y)*(x,y)=(x,x\rightharpoonup y)=(x,-x),
\]
since $x\wedge y=\bot$ implies $x\rightharpoonup y=-x$.

For convenience, we will omit the subscript in $\He_\A$ whenever it is clear from the context. Furthermore, we will use the notation $\RH{\He}$ instead of $\R{\He}{H}$. With this convention, the well-known result by Fidel and Vakarelov can be derived as a corollary of the previous theorem:

\begin{corollary}
For every Nelson lattice $\A$, there exists a Heyting algebra $\He$ such that $\A$ is isomorphic to a subalgebra of $\RH{\He}$.
\end{corollary}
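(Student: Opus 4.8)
The plan is to chain together parts (3) and (4) of Theorem~\ref{TwistRepresentation}, so that no new construction is needed. First I would invoke part (3): for the given Nelson lattice $\A$ it supplies a Heyting algebra $\He_\A$ together with a Boolean filter $F_\A$ of $\He_\A$ and an isomorphism $\A \cong \R{\He_\A}{F_\A}$. This already represents $\A$ as a twist structure, but over the \emph{particular} filter $F_\A$ rather than over the improper filter $H$ that appears in the abbreviation $\RH{\He_\A}=\R{\He_\A}{H}$.

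The next step is to move from $F_\A$ up to $H$. Here I would use the observation recorded just before the theorem, namely that the Boolean filters of $\He_\A$ form a lattice whose greatest element is the improper filter $H$ (equivalently, $H$ is a Boolean filter because $D(\He_\A)\subseteq H$ trivially, and every Boolean filter is contained in it). In particular $F_\A \subseteq H$. Applying part (4) with $F_1 = F_\A$ and $F_2 = H$ then yields that $\R{\He_\A}{F_\A}$ is a subalgebra of $\R{\He_\A}{H} = \RH{\He_\A}$.

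Finally I would compose the two facts: the isomorphism $\A \cong \R{\He_\A}{F_\A}$ from part (3) followed by the subalgebra inclusion $\R{\He_\A}{F_\A}\hookrightarrow \RH{\He_\A}$ from part (4) exhibits $\A$ as isomorphic to a subalgebra of $\RH{\He_\A}$. Setting $\He := \He_\A$ then gives exactly the desired statement, recovering the Fidel--Vakarelov representation as an immediate consequence.

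I do not anticipate any genuine obstacle here, since the corollary is essentially a bookkeeping consequence of the theorem; the only points requiring a moment's verification are that the improper filter $H$ is indeed a Boolean filter and that $F_\A\subseteq H$, both of which are immediate from the lattice structure of Boolean filters described in the preliminaries.
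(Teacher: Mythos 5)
Your proposal is correct and matches exactly the derivation the paper intends: chaining part (3) of Theorem~\ref{TwistRepresentation} (the isomorphism $\A \cong \R{\He_\A}{F_\A}$) with part (4) (the inclusion $\R{\He_\A}{F_\A} \subseteq \R{\He_\A}{H} = \RH{\He_\A}$, valid since $F_\A \subseteq H$ and $H$ is the greatest Boolean filter). The paper states the corollary without proof precisely because this bookkeeping argument is immediate, and your verification of the two small side conditions is accurate.
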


On each Nelson lattice $\A = \langle A, \vee, \wedge, *, \To, \top, \bot \rangle$, define the binary operation $\to$ by the prescription $x \to y := x^2 \To y$, and let $\A' = \langle A, \vee, \wedge, \to, \top, \bot \rangle$. For each Nelson lattice $\A$, the binary relation $\equiv$ defined on $\A$ by the prescription 
\begin{equation} \label{equivalencia}
x \equiv y \text{ if and only if } x^2 = y^2
\end{equation}
is a congruence on the algebra $\A'$. The quotient $\A'/\equiv$, with the natural operations, is a Heyting algebra (see \cite[Theorem 3.4]{Busaniche}). 

\begin{remark} \label{SpecialHeytingAlgebra}
Let $\A$ be a Nelson lattice. Define $H^* := \{ a \in A : a^2 = a \}$ and the operations $a \star^* b = (a \star b)^2$ for every binary lattice operation, and $a \rightharpoonup^* b = (a \to b)^2$. In \cite{Sendlewski}, Sendlewski proved that $\He^* = \langle H^*, \vee^*, \wedge^*, \rightharpoonup^*, \bot, \top \rangle$ is a Heyting algebra isomorphic to $\A'/\equiv$ (using the assignment $[a]_\equiv \mapsto a^2$). Note that $a^2$ is the least element in the class $[a]_\equiv$ with respect to the lattice ordering in $\A$. 

Furthermore, if $a \in H^*$, we have:
\[
-^* a = a \rightharpoonup^* \bot = (a \to \bot)^2 = (a^2 \To \bot)^2 = (\sim a^2)^2.
\]
Since $a^2 = a$, it follows that $-^* a = (\sim a)^2$.
\end{remark}

Let $F^* \subseteq H^*$ be the set 
\[
F^* = \{ (a \vee \sim a)^2 : a \in A \} = \{ b^2 : \sim b \leq b \}.
\]
Then $F^*$ is a boolean filter of $\He^*$ because it contains the dense elements. That is, if $a\in H^*$ and $-^* a = \bot$, then $(a \vee \sim a)^2= a$.


\begin{remark}
It is worth noticing that for any Nelson lattice $\A$, $\He_\A$ is isomorphic to $\He^*$.
\end{remark}

\begin{remark} \label{transprop}
It is also worth noticing that by using the assignment $[a]_\equiv \mapsto a^2$, we can fix some conditions on the Heyting algebra by putting the 2-potency of the same condition on the Nelson lattice. For instance, a Stonean Heyting algebra can be obtained by requiring that the Nelson algebra satisfies the equation:
\[
(\sim x^2)^2 \vee (\sim(\sim x^2)^2)^2 = \top.
\]
\end{remark}

The following lemma will be used in the proof of Theorem~\ref{MappingNelsonHeyting} and again in Section~\ref{sec4}. Although not stated in this form, it follows from results in \cite{Sendlewski}.

\begin{lemma}\label{SendlewskiBoolanResult}
Let $\A$ be a Nelson lattice, $a,b \in A$, and consider $F^*$ as defined before.
\begin{enumerate}
\item If $a^2 = b^2$ and $(\sim a)^2 = (\sim b)^2$, then $a = b$.
\item If $(a \wedge b)^2 = \bot$ and $a^2 \vee b^2 \in F^*$, then there exists $c \in A$ such that $c^2 = a^2$ and $(\sim c)^2 = b^2$.
\end{enumerate}
\end{lemma}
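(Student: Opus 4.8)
The plan is to reduce both items to the twist representation of Theorem~\ref{TwistRepresentation}. By item~(3), together with the identification $\He_\A \cong \He^*$, I would fix an isomorphism $\varphi\colon \A \to \R{\He^*}{F}$, where $F$ is the distinguished Boolean filter $F_\A$ of $\He^*$ associated with $\A$ (namely $F^\equiv$ transported along $[a]_\equiv \mapsto a^2$), and then identify every element of $\A$ with its image pair. Writing $a=(a_1,a_2)$ and $b=(b_1,b_2)$ with $a_i,b_i\in H^*$, $a_1\wedge a_2=\bot$ and $a_1\vee a_2\in F$ (and similarly for $b$), the one computation I want up front is the coordinate form of the square: applying the fusion formula of item~(1) gives $a^2=(a_1,\,a_1\rightharpoonup a_2)$, and since $a_1\wedge a_2=\bot$ forces $a_1\rightharpoonup a_2=-a_1$, we obtain $a^2=(a_1,-a_1)$; because $\sim a=(a_2,a_1)$, the same computation yields $(\sim a)^2=(a_2,-a_2)$, with the analogous identities for $b$.

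For item~(1) these formulas make the argument immediate: $a^2=b^2$ means $(a_1,-a_1)=(b_1,-b_1)$, hence $a_1=b_1$, while $(\sim a)^2=(\sim b)^2$ means $(a_2,-a_2)=(b_2,-b_2)$, hence $a_2=b_2$; since $\varphi$ is injective, $a=b$.

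For item~(2) the natural candidate is $c:=(a_1,b_1)$, and the task is to show that this pair is a legitimate element of $\R{\He^*}{F}$ and then read off its square. Translating the hypotheses into coordinates, $(a\wedge b)^2=(a_1\wedge b_1,\,-(a_1\wedge b_1))$, so $(a\wedge b)^2=\bot=(\bot,\top)$ is equivalent to $a_1\wedge b_1=\bot$; and $a^2\vee b^2=(a_1\vee b_1,\,-(a_1\vee b_1))$, which is idempotent and corresponds in $\He^*$ to $a_1\vee b_1$, so $a^2\vee b^2\in F$ is equivalent to $a_1\vee b_1\in F$. These are exactly the two membership conditions for $(a_1,b_1)\in\RN{\He^*}{F}$, so $c\in\A$ is well defined; a final coordinate computation then gives $c^2=(a_1,-a_1)=a^2$ and $(\sim c)^2=(b_1,-b_1)=b^2$.

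The part that needs genuine care is the filter bookkeeping in item~(2): the element $c$ has to live inside $\A$, i.e.\ inside the twist structure taken with respect to the \emph{specific} filter $F_\A$, so the hypothesis ``$a^2\vee b^2\in F$'' only does its job when $F$ is read as that distinguished filter; en route one should also verify that $a^2\vee b^2$ is idempotent (so that membership in a filter of $\He^*$ is meaningful) and that the join of $\A$ agrees with $\vee^*$ on idempotents. I would finally remark that this route leans on Theorem~\ref{TwistRepresentation}; a proof from first principles, in the spirit of Sendlewski, would instead have to construct $c$ directly from the congruence $\equiv$ and the Boolean filter, and that existence claim is the real crux, since the regular elements $a^2,(\sim b)^2$ and their lattice combinations can never on their own produce the non-regular pair $(a_1,b_1)$.
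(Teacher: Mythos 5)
The paper never proves this lemma---it is imported as background, cited from \cite[Lemma 3.5]{Sendlewski}---precisely because it is the engine behind the canonical representation: item (1) is the injectivity and item (2) the surjectivity of the map $a \mapsto (a^2,(\sim a)^2)$ onto $\RN{\He^*}{F^*}$ in Theorem \ref{MappingNelsonHeyting}(1) and Theorem \ref{isomorfismo}. That is exactly why your proposal has a genuine gap: you derive the lemma from Theorem \ref{TwistRepresentation}(3), but in the cited sources that representation theorem is proved \emph{from} this lemma; moreover, the bookkeeping you flag---that the isomorphism $\varphi$ may be chosen so that each idempotent $x \in H^*$ goes to $(x,-^*x)$ and the abstract filter $F_\A$ corresponds to $F^* = \{(x \vee \sim x)^2 : x \in A\}$---is itself the content of Theorem \ref{MappingNelsonHeyting}(1), which again rests on this lemma. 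Unfolded, the argument is circular; taken as a black-box reduction, it only verifies that the lemma holds in algebras that are literally of the form $\R{\He}{F}$, which is a consistency check rather than a proof for an abstract Nelson lattice $\A$. You concede this in your closing remark (the existence claim is ``the real crux''), but the proposal never supplies that crux: a non-circular proof must construct $c$ inside $\A$, by the operations of $\A$, from the hypotheses $(a\wedge b)^2=\bot$ and $a^2\vee b^2\in F^*$, using the concrete description of $F^*$ to extract a witness $d$ with $(d\vee\sim d)^2 = a^2\vee b^2$ and combining it with $a$ and $b$; this is what Sendlewski's original argument does and what is missing here.

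Two things in your proposal are correct and worth keeping. First, the coordinate computations are all right: $a^2=(a_1,-a_1)$, $(\sim a)^2=(a_2,-a_2)$, $(a\wedge b)^2=(a_1\wedge b_1,-(a_1\wedge b_1))$ and $a^2\vee b^2=(a_1\vee b_1,-(a_1\vee b_1))$, so the lemma does hold in every twist algebra. Second, your insistence that $F$ be read as the distinguished filter is not pedantry but essential: as literally stated, for an arbitrary Boolean filter of $\He^*$, item (2) is false. Take $\A$ to be the two-element Boolean algebra viewed as a Nelson lattice and $F = H^*$ the improper filter (which the paper counts among the Boolean filters); with $a=b=\bot$ one has $(a\wedge b)^2=\bot$ and $a^2\vee b^2=\bot\in F$, yet no $c$ satisfies $c^2=\bot$ and $(\sim c)^2=\bot$. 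The statement is correct exactly for Boolean filters contained in $F^*$, which is the reading under which it feeds Theorems \ref{MappingNelsonHeyting} and \ref{isomorfismo}.
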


\begin{proof}
    1. The proof can be found in Theorem 3.6 of \cite{Busaniche}.
    
    2. Since $\bot=(a\wedge b)^2=(a*b)^2=a^2*b^2$, it follows that $a^2\leq \sim b^2$. Let $d\in A$ be such that $\sim d\leq d$ and $(a\vee b)^2=d^2$. Define $c\coloneq d\wedge \sim b^2$. Then, 
    Then
    \[
    c^2=(d\wedge \sim b^2)^2=(d^2 * \sim b^2)^2.
    \]
    Since $d^2=a^2\vee b^2$, we obtain
    \[
    c^2=((a^2\vee b^2)*\sim b^2)^2.
    \]
    By distributivity,
    \[
    ((a^2\vee b^2)*\sim b^2)^2=((a^2*\sim b^2)\vee (b^2*\sim b^2))^2=(a^2*\sim b^2)^2,
    \]
    because $b^2*\sim b^2=\bot$. Finally, as $a^2\leq \sim b^2$,
    \[
    (a^2*\sim b^2)^2=(a^2\wedge \sim b^2)^2=a^2.
    \]
    Hence $c^2=a^2$. On the other hand,
    \[
    \sim c=\sim(d\wedge \sim b^2)=\sim d\vee b^2,
    \]
    and therefore
    \[
    (\sim c)^2=(\sim d\vee b^2)^2=(\sim d)^2\vee b^2.
    \]
    Since $\sim d\leq d$, we have
    \[
    (\sim d)^2\leq d*\sim d=\bot,
    \]
    whence $(\sim d)^2=\bot$. It follows that
    \[
    (\sim c)^2=(\sim d)^2\vee b^2=\bot\vee b^2=b^2.
    \]
\end{proof}

\begin{theorem} \label{MappingNelsonHeyting}
The following statements are valid:

\begin{enumerate}\label{theorem isom}
\item \label{iso2}
Let $\A$ be a Nelson lattice. Then, $\A$ is isomorphic to $\R{\He^*}{F^*}$, where the isomorphism $e\colon A \to \RN{\He^*}{F^*}$ is defined by:
\[
e(a) := (a^2, (\sim a)^2).
\]

\item \cite{Odintsov} Let $\He$ be a Heyting algebra, and let $\mathbf{B}$ be a subalgebra of $\RH{\He}$ such that $\pi_1(B) = H$ (and similarly, $\pi_2(B) = H$), where $\pi_i$ denotes the projection onto the $i$th coordinate of the direct product. We define the following:
\[
I(\mathbf{B}) = \{a \vee \sim a : a \in B\} \quad \text{and} \quad F(\mathbf{B}) = \pi_1(I(\mathbf{B})).
\]
Then, $D(\He) \subseteq F(\mathbf{B})$ is a filter of $\He$, and  $B = \RN{\He}{F(\mathbf{B})}$.
\end{enumerate}
\end{theorem}

It is important to note that, according to \ref{iso2} of Theorem \ref{theorem isom} and Remark \ref{transprop}, if we take a valid equation in the context of Heyting algebras, its 2-potency will also be valid in Nelson lattices.

In the next sections, we are going to extend these results to a modal context.

\section{Modal Heyting algebras}\label{sec3}

In this section, we introduce a new variety of Heyting algebras with modal operators.

\begin{definition} \label{ModalHeytingAlgebra}
A \emph{modal Heyting algebra} is an algebra $\mathbf{M} = \langle \He, \square, \Diamond \rangle$ such that the reduct $\He$ is a Heyting algebra, $\square$ and $\Diamond$ are unary operators on $\He$, and for all $a, b \in H$, the following equation is satisfied:
\begin{align}
    \square a \wedge \Diamond (-a \wedge b) = \bot. \tag{mH} \label{mH}
\end{align}
We denote by $\mathcal{MH}$ the variety of these algebras.
\end{definition}

\begin{lemma} \label{equivAxrule1}
Let $\mathbf{M} = \langle \He, \square, \Diamond \rangle$ be a modal Heyting algebra. Then the equation \eqref{mH} is equivalent to the following quasi-equation:
\begin{align}
     \text{If } a \wedge b = \bot, \text{ then } \square a \wedge \Diamond b = \bot. \tag{mH'} \label{mH'}
\end{align}
\end{lemma}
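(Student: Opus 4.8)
The plan is to establish the two implications separately, and the only Heyting-algebra facts I expect to need are the pseudocomplement law $a \wedge -a = \bot$ together with the residuation equivalence $a \wedge b = \bot \iff b \leq -a$ (which follows from $-a = a \rightharpoonup \bot$ and the defining adjunction of $\rightharpoonup$, namely $a \wedge b \leq \bot \iff b \leq a \rightharpoonup \bot$).

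First I would prove \eqref{mH} $\Rightarrow$ \eqref{mH'}. Assume $a \wedge b = \bot$. By the residuation equivalence this gives $b \leq -a$, hence $-a \wedge b = b$. Substituting into the instance of \eqref{mH} for this $a$ and $b$ yields $\square a \wedge \Diamond b = \square a \wedge \Diamond(-a \wedge b) = \bot$, which is exactly the conclusion of \eqref{mH'}.

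For the converse \eqref{mH'} $\Rightarrow$ \eqref{mH}, fix arbitrary $a, b \in H$ and apply \eqref{mH'} to the pair $a$ and $b' := -a \wedge b$. The hypothesis of the quasi-equation is met because $a \wedge b' = a \wedge (-a \wedge b) = (a \wedge -a) \wedge b = \bot \wedge b = \bot$, using the pseudocomplement law. Thus \eqref{mH'} delivers $\square a \wedge \Diamond(-a \wedge b) = \square a \wedge \Diamond b' = \bot$, which is precisely \eqref{mH}.

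I do not anticipate a genuine obstacle here: the content lies entirely in choosing the right instance in each direction, namely observing that $-a \wedge b$ collapses to $b$ when $b \leq -a$ (forward direction) and that $a \wedge (-a \wedge b) = \bot$ holds automatically (backward direction). The point worth emphasizing is that no monotonicity or normality of $\square$ or $\Diamond$ is invoked; the equivalence rests solely on the behavior of $a \wedge -a = \bot$ inside the Heyting reduct, which is exactly why \eqref{mH} can be taken as the \emph{weakest} condition linking the two operators.
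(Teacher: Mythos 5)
Your proposal is correct and follows exactly the same route as the paper's proof: the forward direction uses $a \wedge b = \bot \Rightarrow b \leq -a$ to collapse $\Diamond(-a \wedge b)$ to $\Diamond b$, and the converse applies the quasi-equation \eqref{mH'} to the pair $a$ and $-a \wedge b$, whose meet with $a$ is $\bot$. Your additional remark that no monotonicity or normality of $\square$, $\Diamond$ is needed is accurate and consistent with the paper's emphasis that \eqref{mH} is a minimal interaction condition.
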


\begin{proof}
$\Rightarrow$) Assume \eqref{mH}. If $a \wedge b = \bot$, then $b \leq -a$, and therefore $-a \wedge b = b$. Thus, $\square a \wedge \Diamond b = \square a \wedge \Diamond (-a \wedge b) = \bot$. \\
$\Leftarrow$) Assume \eqref{mH'}. Since $a \wedge -a \wedge b = \bot$, we conclude that $\square a \wedge \Diamond (-a \wedge b) = \bot$.
\end{proof}

To justify Condition \eqref{mH'}, we refer to \cite{Oli2020}, where the authors study different intuitionistic non-normal modal logics. Specifically, they consider a family of these logics, including both operators $\Box$ and $\Diamond$. They emphasize the challenge of finding suitable interactions between the modalities without reaching interdefinability. Their logics are distinguished by the different strengths of interactions between the modalities. The underlying logic of our variety of modal Heyting algebras corresponds to their Hilbert system $\textbf{IE}_3$ in \cite[Section 4.2]{Oli2020}, where our \eqref{mH'} quasi-equation aligns with their {\bf str} rule.

Let $\mathbf{M} = \langle \He, \square, \Diamond \rangle$ be a modal Heyting algebra. Observe that the degenerate modal case, in which $\Box a = \top$ and $\Diamond a = \bot$ for every $a \in H$, is included in Condition \eqref{mH'}.\footnote{In Kripke semantics, this corresponds to worlds with no successors.}

According to our definition, modal Heyting algebras form a variety, and, to the best of our knowledge, this has not been previously mentioned. However, there are some well-known extensions of this class. For instance, the variety of \emph{relational}\footnote{In the classical modal logic literature, this class is usually referred to as \emph{normal}. However, we reserve this terminology for a different class of algebras.} modal Heyting algebras is obtained by including the following equations:
\begin{align}
    \square \top = \top, \tag{mH1} \label{mH1} \\
    -\Diamond a = \square -a, \tag{mH2} \label{mH2} \\
    \square (a \rightharpoonup b) \rightharpoonup (\square a \rightharpoonup \square b) = \top. \tag{mH3} \label{mH3}
\end{align}

Note that \eqref{mH'} implies that $\square a \wedge \Diamond -a = \bot$ and $\square -a \wedge \Diamond a = \bot$, which leads to the conclusion that $\Diamond -a \leq -\square a$ and $\square -a \leq -\Diamond a$. Furthermore, assuming \eqref{mH1}, we get $\Diamond \bot = \bot$.

\begin{remark}
It is worth noting that in a relational modal Heyting algebra $\mathbf{M}$ (i.e., one that satisfies \eqref{mH1}-\eqref{mH3}), Condition \eqref{mH'} is derivable from \eqref{mH1}-\eqref{mH3}. Specifically, if $a \wedge b = \bot$, then we have $\square (a \wedge b \rightharpoonup \bot) = \square(a \rightharpoonup (b \rightharpoonup \bot)) = \top$ according to \eqref{mH1}. Applying equation \eqref{mH3}, we also get $\square a \rightharpoonup \square(b \rightharpoonup \bot) = \top$. Finally, by using equation \eqref{mH2}, we conclude that $\square a \wedge \Diamond b = \bot$.
\end{remark}

It is also well-known that if an algebra satisfies equation \eqref{mH1}, then \eqref{mH3} is equivalent to the equation $\square (a \wedge b) = \square a \wedge \square b$.

Returning to our definition of Modal Heyting algebras, it is important to note that the most general version typically found in the literature is a structure \( \langle \He, \square, \Diamond \rangle \) where \( \He \) is a Heyting algebra, and the operators \( \square \) and \( \Diamond \) are independent of each other. In contrast, we have introduced property \eqref{mH} in Definition \ref{ModalHeytingAlgebra} to impose a specific interaction between these modalities, which allows us to characterize modal Nelson lattices in the following section.

\section{Modal Nelson lattices}\label{sec4}

\begin{definition}  \label{ModalN3Lattice} 
A \emph{modal Nelson residuated lattice} (for short, MN-lattice) is an algebra $\mathbf{N}= \langle \A,\blacksquare \rangle $ such that the reduct $ \A $ is a Nelson lattice and, for all $ a, b \in A $, the following conditions hold:
\begin{align}
    & \text{If } a^2 = b^2, \text{ then } (\blacksquare a)^2 = (\blacksquare b)^2 \text{ and } (\sim \blacksquare \sim a)^2 = (\sim \blacksquare \sim b)^2, \tag{mN1} \label{mN2} \\
    & (\blacksquare a \wedge \sim \blacksquare (a^2 \vee \sim b))^2 = \bot. \tag{mN2} \label{mN3}
\end{align}
\end{definition}
In these algebras, one may define the dual modal operator $\blacklozenge$ by
\begin{equation}\label{mN1}\tag{mN3}
\blacklozenge a \coloneqq \sim \blacksquare \sim a.
\end{equation}

Accordingly, Condition \eqref{mN3} may be equivalently written as
\[
  (\blacksquare a \wedge \blacklozenge (\sim a^2 \wedge b))^2 = \bot.  
\]

As in the case of modal Heyting algebras, Condition \eqref{mN3} is equivalent to the following quasi-equation: 
\begin{align}
    \text{If } (a \wedge b)^2 = \bot, \text{ then } (\blacksquare a \wedge \blacklozenge b)^2 = \bot. \tag{mN2'} \label{mN3'}
\end{align}

The proof of this equivalence is similar to Lemma \ref{equivAxrule1}.

Condition \eqref{mN2} is equivalent to the following equations:
\begin{align}
    & (\blacksquare a)^2 = (\blacksquare a^2)^2, \tag{mN1$\blacksquare$'}\label{mN2s}\\
    & (\blacklozenge a)^2 = (\blacklozenge a^2)^2. \tag{mN1$\blacklozenge$'}\label{mN2d}
\end{align}

Therefore, the class of $\mathcal{MN}$-lattices forms a variety, which we denote by $\mathcal{MN}$. In particular, the modal operations are well defined on the algebra $\He^*$.

 Now, we introduce some extensions of our basic notion of MN-lattice. For instance, an MN-lattice $\mathbf{N}$ is said to be $\blacksquare$-\emph{regular} if, in addition to the conditions from Definition \ref{ModalN3Lattice}, it satisfies the following equation:
 \begin{align}
     \blacksquare(a \wedge b) = \blacksquare a \wedge \blacksquare b. \tag{mN4}\label{mN4}
 \end{align}

 Moreover, if $\mathbf{N}$ is a $\blacksquare$-regular modal N-lattice (for short, RMN-lattice), using \eqref{mN1} and \eqref{mN4}, we can conclude that it is also $\blacklozenge$-regular, i.e., it satisfies:
 \begin{align}
     \blacklozenge(a \vee b) = \blacklozenge a \vee \blacklozenge b. \tag{mN4'} \label{mN4'}
 \end{align}

 Finally, we say that a modal Nelson lattice is \emph{relational} if it is $\blacksquare$-regular and, in addition, satisfies:
 \begin{align}
      \blacksquare \top = \top. \tag{mN5}\label{mN5}
 \end{align}

\begin{remark}
    It is worth noting that if $\mathbf{N}$ is a relational modal N-lattice, then its associated Heyting algebra $\He^*$ is relational as well. In particular, conditions \eqref{mN4} and \eqref{mN5} correspond to \eqref{mH3} and \eqref{mH1}, respectively. This correspondence will become apparent in the representation theorem, where modal Nelson algebras are characterized in terms of modal Heyting algebras.
\end{remark}


 According to \eqref{mN2}, we can reproduce the following classical result on RMN-lattices:

 \begin{lemma} \label{monotonicity_RMN}
 If $\mathbf{N}=\langle \A,\blacksquare\rangle$ is a $\blacksquare$-regular modal N-lattice, then it satisfies the following monotonicity properties:
 \begin{align}
     &\text{if } a^2\leq b, \text{ then } (\blacksquare a)^2\leq \blacksquare b, \tag{mN6} \label{mN6} \\
     &\text{if } (\sim a)^2\leq \sim b, \text{ then } (\sim \blacksquare a)^2\leq \sim \blacksquare b. \tag{mN7} \label{mN7}
 \end{align}
 \end{lemma}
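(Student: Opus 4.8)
The plan is to reduce both statements to the ordinary order-monotonicity of the modal operators, which I would extract from the regularity equations \eqref{mN4} and \eqref{mN4'}, and then bridge between the square of a modal term and the modal image of a square using the $2$-potency reformulation of \eqref{mN2}. I would first record two elementary facts. In any Nelson lattice $x^2 = x * x \le x * \top = x$, since $\top$ is the unit of the fusion $*$ and $*$ is order-preserving. Moreover, \eqref{mN4} says that $\blacksquare$ preserves meets, so $u \le v$ gives $\blacksquare u = \blacksquare(u \wedge v) = \blacksquare u \wedge \blacksquare v$, i.e. $\blacksquare u \le \blacksquare v$; dually \eqref{mN4'} says $\blacklozenge$ preserves joins, hence $\blacklozenge$ is order-preserving as well.

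For \eqref{mN6} I would assume $a^2 \le b$. Monotonicity of $\blacksquare$ then yields $\blacksquare a^2 \le \blacksquare b$. On the other hand, the equational form \eqref{mN2s} of \eqref{mN2} gives $(\blacksquare a)^2 = (\blacksquare a^2)^2$, and combining this with $x^2 \le x$ applied to $x = \blacksquare a^2$ produces $(\blacksquare a)^2 = (\blacksquare a^2)^2 \le \blacksquare a^2 \le \blacksquare b$, which is exactly \eqref{mN6}.

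For \eqref{mN7} the key move I would make is to convert the statement about $\blacksquare$ into one about $\blacklozenge$ through the interdefinability \eqref{mN1}. Applying \eqref{mN1} to $\sim a$ and using $\sim\sim a = a$ gives $\sim \blacksquare a = \blacklozenge \sim a$, and likewise $\sim \blacksquare b = \blacklozenge \sim b$. Hence both the hypothesis $(\sim a)^2 \le \sim b$ and the desired conclusion are about $\blacklozenge$: I must show $(\blacklozenge \sim a)^2 \le \blacklozenge \sim b$. Since $\blacklozenge$ is order-preserving by the dual argument above, $\blacklozenge((\sim a)^2) \le \blacklozenge \sim b$; and using the equational form \eqref{mN2d} of \eqref{mN2} together with $x^2 \le x$ I obtain $(\blacklozenge \sim a)^2 = (\blacklozenge (\sim a)^2)^2 \le \blacklozenge (\sim a)^2 \le \blacklozenge \sim b$. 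Re-translating through \eqref{mN1} this is $(\sim \blacksquare a)^2 \le \sim \blacksquare b$, establishing \eqref{mN7}.

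The computations are routine; the one point deserving attention is the passage between the square of a modal term and the modal image of a square, which is precisely what \eqref{mN2} (in the forms \eqref{mN2s} and \eqref{mN2d}) is built to control, the inequality $x^2 \le x$ being what lets the Heyting-level monotonicity transfer to the squared, Nelson-level inequality. The hard part—though still modest—will be bookkeeping the translation through \eqref{mN1} in \eqref{mN7}, making sure the negation and the square land on the correct arguments so that the dual regularity \eqref{mN4'} applies cleanly.
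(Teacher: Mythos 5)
Your proof is correct, and while it runs on the same basic ingredients as the paper's---regularity \eqref{mN4}/\eqref{mN4'}, condition \eqref{mN2}, the duality \eqref{mN1}, and the inequality $x^2\leq x$---the decomposition is genuinely different and arguably cleaner. The paper applies \eqref{mN2} in its congruence form to the pairs $(a,\, a\wedge b)$ and $(\sim b,\, \sim a \vee \sim b)$: from $a^2\leq b$ it first derives $a^2\leq b^2$ via Corollary \ref{aux1}, then the identity $a^2=(a\wedge b)^2$, and concludes $(\blacksquare a)^2=(\blacksquare(a\wedge b))^2=(\blacksquare a\wedge \blacksquare b)^2\leq \blacksquare b$ by \eqref{mN4}, arguing dually for \eqref{mN7} through $(\sim b)^2=(\sim a\vee \sim b)^2$ and \eqref{mN4'}. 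You instead factor out the order-monotonicity of $\blacksquare$ and $\blacklozenge$ (an immediate consequence of meet/join preservation) and apply the equational forms \eqref{mN2s} and \eqref{mN2d} directly to $a^2$ and $(\sim a)^2$, obtaining $(\blacksquare a)^2=(\blacksquare a^2)^2\leq \blacksquare a^2\leq \blacksquare b$ and its $\blacklozenge$-dual. What your route buys: it bypasses Corollary \ref{aux1} entirely and avoids the intermediate identities $a^2=(a\wedge b)^2$ and $(\sim b)^2=(\sim a\vee\sim b)^2$, which the paper asserts without detail and which themselves require a short potency computation (or an appeal to the quotient Heyting algebra) to justify. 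What the paper's route buys is conceptual alignment with the rest of Section 4: it phrases the hypothesis as a congruence statement ($a\equiv a\wedge b$) and so keeps the argument in the idiom of the quotient construction used for the twist representation.
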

 \begin{proof}
 If $a^2\leq b$, then by Corollary \ref{aux1}, we know that $a^2\leq b^2$. Under this condition, $a^2 = (a\wedge b)^2$, and by equation \eqref{mN4}, we obtain:
 \[
 (\blacksquare a)^2 = (\blacksquare (a \wedge b))^2 = (\blacksquare a \wedge \blacksquare b)^2 \leq \blacksquare b.
 \]

 A similar argument applies to the second property. If $(\sim a)^2\leq \sim b$, then $(\sim b)^2 = (\sim a \vee \sim b)^2$. Using equations \eqref{mN4'} and \eqref{mN1}, we obtain:
 \[
 \sim \blacksquare b \geq (\sim \blacksquare b)^2 = (\blacklozenge \sim b)^2 = (\blacklozenge (\sim a \vee \sim b))^2 =  (\blacklozenge \sim a \vee \blacklozenge \sim b)^2 \geq (\blacklozenge \sim a)^2 = (\sim\blacksquare a ) ^2.
 \]
 \end{proof}
The last result establishes a connection between our work and Section 4 in \cite{JanRivi2014}.
 \begin{remark}
 It is worth noting that if $\mathbf{N}=\langle \A,\blacksquare\rangle$ is a $\blacksquare$-regular modal N-lattice, then equation \eqref{mN3} from Definition \ref{ModalN3Lattice} is derivable.  
 \end{remark}

 Since, according to \eqref{mN2s} and \eqref{mN2d}, the modal operators $\blacksquare$ and $\blacklozenge$ are compatible with the relation $\equiv$, it is natural to define their corresponding operations on the quotient structure $\A' / \equiv$. 


\begin{lemma}\label{lemma Heytingfilter}
Let $\mathbf{N}=\langle \A,\blacksquare\rangle$ be an MN-lattice. Then, the structure $\mathbf{M}_\mathbf{N}^*=\langle{\He}^*, \Box^*, \Diamond^*\rangle$, where ${\He}^*$ is the Heyting algebra defined in Remark \ref{SpecialHeytingAlgebra}, and the modal operators $\Box^*$ and $\Diamond^*$ are defined for every $a \in H^*$ as:
\begin{align}
    \Box^* a &= (\blacksquare a)^2, \\
    \Diamond^* a &= (\blacklozenge a)^2,
\end{align}
is a modal Heyting algebra. Furthermore, if we define the set:
\[
   F^* = \{(a \vee \sim a)^2 : a \in A \},
\]
then $F^*$ is a boolean filter satisfying the following condition for every $a, b \in H^*$:
\[
   \text{if } a \vee^* b \in F^* \text{ and } a \wedge^* b = \bot, \text{ then } \Box^* a \vee^* \Diamond^* b \in F^*.
\]
\end{lemma}
\begin{proof}
Recall from Remark \ref{SpecialHeytingAlgebra} that the reduct ${\He}^* = \langle H^*,\vee^*,\wedge^*,\rightharpoonup^*,\bot,\top\rangle$ is a Heyting algebra. The operators are well defined as an immediate consequence of Property {\eqref{mN2}} in Definition \ref{ModalN3Lattice}. 

To prove Condition {\eqref{mH'}}, which is equivalent to {\eqref{mH}} by Lemma \ref{equivAxrule1}, assume that $a \wedge^* b = \bot$ for some $a, b \in H^*$. By definition, we have: $ a \wedge^* b = (a \wedge b)^2 = \bot$.
Applying Property {\eqref{mN3'}}, we obtain $(\blacksquare a \wedge \blacklozenge b)^2 = \bot$.
Moreover, since $(\blacksquare a \wedge \blacklozenge b)^2 = ((\blacksquare a)^2 \wedge (\blacklozenge b)^2)^2$,
it follows that $\Box^* a \wedge^* \Diamond^* b = \bot$.

Now, suppose that $a,b\in H^*$, $a\vee^*b\in F^*$ and $a\wedge^*b=\bot$. By Lemma \ref{SendlewskiBoolanResult}, it follows that there exists $c\in A$ such that $c^2=a$ and $(\sim c)^2=b$. Then, $\square^*a=(\blacksquare a)^2=(\blacksquare c^2)^2$, and by \eqref{mN2s}, $\square^*a=(\blacksquare c)^2$. Similarly, $\Diamond^*b=(\blacklozenge b)^2=(\blacklozenge(\sim c)^2)^2$. By \eqref{mN2d}, $(\blacklozenge(\sim c)^2)^2=(\blacklozenge(\sim c))^2=(\sim \blacksquare c)^2$, and hence $\Diamond^*b=(\sim \blacksquare c)^2$. Therefore $\square^*a\vee \Diamond^*b=(\blacksquare c)^2\vee(\sim \blacksquare c)^2=(\blacksquare c\vee \sim \blacksquare c)^2\in F^*$.
\end{proof}

Our goal is to prove that every modal Nelson algebra can be represented as a twist-structure over a modal Heyting algebra. First, we will demonstrate that the modal operators are well-defined on the twist product and that they satisfy the equations to belong to the variety $\mathcal{MN}$.

\begin{theorem} \label{ModalHeyToModalNelson}
Let $\mathbf{M}=\langle \He,\square,\Diamond\rangle$ be a modal Heyting algebra as in Definition \ref{ModalHeytingAlgebra}. Then, $\mathbf{N}(\mathbf{M})=\langle \RH{\mathbf{H}},\blacksquare\rangle$ is a modal Nelson lattice, where the reduct $\RH{\mathbf{H}}=\langle R(\mathbf{H}),\wedge,\vee, *, \To,\bot,\top\rangle$
is defined as in Theorem \ref{TwistRepresentation}(1), and the modal operators $\blacksquare$ and $\blacklozenge$ are given by:
\begin{align}
    \blacksquare(x,y) &= (\square x, \Diamond y), \\  
    \blacklozenge(x,y) &= (\Diamond x, \square y).
\end{align}
\end{theorem}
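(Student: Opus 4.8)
The plan is to first invoke Theorem \ref{TwistRepresentation}(1) with $F=H$, which already gives that the reduct $\RH{\mathbf{H}}$ is a Nelson lattice; so only the well-definedness of the two operators on $R(\mathbf{H})$ and the modal axioms \eqref{mN1}--\eqref{mN3} remain to be checked. For well-definedness I would note that $(x,y)\in R(\mathbf{H})$ means precisely $x\wedge y=\bot$ (the condition $x\vee y\in H$ being vacuous). Then Lemma \ref{equivAxrule1} lets me use the quasi-equation \eqref{mH'}, which yields $\square x\wedge\Diamond y=\bot$ and, swapping roles, $\square y\wedge\Diamond x=\bot$; these are exactly the conditions for $\blacksquare(x,y)=(\square x,\Diamond y)$ and $\blacklozenge(x,y)=(\Diamond x,\square y)$ to lie in $R(\mathbf{H})$.

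Axiom \eqref{mN1} is then a one-line computation: since $\sim(x,y)=(y,x)$, I get $\sim\blacksquare\sim(x,y)=\sim(\square y,\Diamond x)=(\Diamond x,\square y)=\blacklozenge(x,y)$. For \eqref{mN3} I would instead verify the equivalent quasi-equation \eqref{mN3'}: writing $a=(x,y)$ and $b=(s,t)$, the hypothesis $(a\wedge b)^2=\bot$ reduces to $x\wedge s=\bot$, while the first coordinate of $(\blacksquare a\wedge\blacklozenge b)^2$ is $\square x\wedge\Diamond s$; another application of \eqref{mH'} gives $\square x\wedge\Diamond s=\bot$, hence $(\blacksquare a\wedge\blacklozenge b)^2=\bot$.

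The main obstacle is \eqref{mN2}, precisely because the construction deliberately does not assume monotonicity of $\square$ or $\Diamond$, so I cannot argue from $y\leq x\rightharpoonup y$ that $\Diamond y\leq\Diamond(x\rightharpoonup y)$. I would reduce \eqref{mN2} to its equational forms \eqref{mN2s}--\eqref{mN2d} and use the twist identity $(x,y)^2=(x,\,x\rightharpoonup y)$, so that $(\blacksquare(x,y))^2=(\square x,\,\square x\rightharpoonup\Diamond y)$ whereas $(\blacksquare(x,y)^2)^2=(\square x,\,\square x\rightharpoonup\Diamond(x\rightharpoonup y))$. The trick that makes these agree without monotonicity is the Heyting identity $p\rightharpoonup q=p\rightharpoonup(p\wedge q)$: it suffices to show $\square x\wedge\Diamond y=\square x\wedge\Diamond(x\rightharpoonup y)$, and both meets are in fact $\bot$ by \eqref{mH'}, using $x\wedge y=\bot$ and $x\wedge(x\rightharpoonup y)=x\wedge y=\bot$. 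The symmetric computation, with $\square$ and $\Diamond$ interchanged and based on $\Diamond x\wedge\square y=\Diamond x\wedge\square(x\rightharpoonup y)=\bot$, handles \eqref{mN2d}. I expect this non-monotone step to carry the essential idea of the proof, while \eqref{mN1} and \eqref{mN3} are routine.
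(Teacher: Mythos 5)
Your proposal is correct and follows essentially the same route as the paper: well-definedness of $\blacksquare$ and $\blacklozenge$ and axiom \eqref{mN3} via the quasi-equation \eqref{mH'}, axiom \eqref{mN1} by the one-line strong-negation computation, and axiom \eqref{mN2} via the key observation that disjointness of coordinates forces the second coordinate of any square to collapse to the pseudocomplement of the first, i.e.\ $(\square x, \Diamond y)^2 = (\square x, -\square x)$. The only cosmetic difference is that you verify \eqref{mN2} through its equational forms \eqref{mN2s}--\eqref{mN2d} while the paper checks the quasi-equational form directly (deriving $a=c$ from $(a,b)^2=(c,d)^2$); the underlying use of \eqref{mH'} is identical in both.
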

\begin{proof}
    By Theorem \ref{TwistRepresentation}, the reduct $\langle R(\mathbf{H}),\wedge,\vee, *, \To,\bot,\top\rangle$ is a Nelson lattice. Suppose $(a,b) \in R(\mathbf{H})$. Then, by definition, we have $a \wedge b = \bot$. Applying Condition \eqref{mH'}, we obtain $\Box a \wedge \Diamond b = \bot$. Consequently, $(\Box a, \Diamond b)$ and $(\Diamond b, \Box a)$ also belong to $R(\mathbf{H})$.

    To prove that $\mathbf{N}(\mathbf{M})$ is a modal Nelson lattice, we must verify that the modal operators satisfy the three properties given in Definition \ref{ModalN3Lattice}.   

     Assume that $(a,b) \in R(\mathbf{H})$. By the definitions of the modal operators, we have $\blacklozenge(a,b) = ( \Diamond a, \Box b) = \sim (\Box b, \Diamond a) = \sim \blacksquare (b,a) = \sim\blacksquare \sim (a,b)$.

     \eqref{mN2}  To prove the first condition, assume that $(a,b)^2=(c,d)^2$. Then, $(a,b)^2=(a, a \rightharpoonup b) = (c, c \rightharpoonup d) = (c,d)^2$. By definition, $a \wedge b = c \wedge d = \bot$, which implies that $a \rightharpoonup b = - a$ and $c \rightharpoonup d = - c$. Hence, $(a,b)^2=(a, - a)$ and $(c,d)^2=(c,- c)$. This implies that $a=c$, and by applying the modal operators appropriately, we obtain $\Box a = \Box c $ and $- \Box a= - \Box c$. Therefore, $(\blacksquare(a,b))^2 = (\Box a, \Diamond b)^2 = (\Box a, - \Box a)$ due to $\Box a \wedge \Diamond b = \bot$. Similarly, $(\blacksquare(c,d))^2 = (\Box c, \Diamond d)^2 = (\Box c, - \Box c)$. Combining both observations, we obtain the desired result. Analogously, we prove that $(\blacklozenge (a,b))^2=(\blacklozenge (c,d))^2$.

    \eqref{mN3} For the second condition, we verify the equivalent Condition \eqref{mN3'}. Assume that $((a,b) \wedge (c,d))^2= (\bot,\top)$. This implies: $(a \wedge c, b \vee d)^2 = (\bot,\top)$. Since $a \wedge c = \bot$, we apply the quasi-equation \eqref{mH'} from Lemma \ref{equivAxrule1} and obtain $\Box a \wedge \Diamond c = \bot$.  Using this, we compute: $(\blacksquare(a,b) \wedge \blacklozenge(c,d))^2$, which is, by definition, equal to $(\Box a \wedge \Diamond c, (\Box a \wedge \Diamond c) \rightharpoonup (\Diamond b \vee \Box d)) = (\bot,\top)$.
   This confirms the final condition and completes the proof.
\end{proof}

\begin{example}
   
Let us consider the modal Heyting algebra $\mathbf{M} = \langle \He, \square, \Diamond \rangle$ depicted in Figure \ref{fig1*}. It is straightforward to verify that $\mathbf{M}$ satisfies Equation \eqref{mH}, as both $\square \bot = \bot$ and $\Diamond \bot = \bot$ hold.

\begin{figure}[ht]
    \centering
\begin{tikzpicture}
   
\begin{scope}
\node [
label=below:{${\bot}$}, label=below:{ }] (n5)  {} ;
\node [above of=n5, label=right:{${b}$}, label=below:{} ] (n1)  {} ;
\node [above right of=n1,label=right:{${c}$}, label=below:{ }] (n2)  {} ;
\node [above left of=n1,label=left:{${a}$}, label=below:{ }] (n3)  {} ;
\node [above right of=n3,label=above:{${\top}$}, label=below:{ }] (n4)  {} ;

\draw  (n1) -- (n2);
\draw  (n1) -- (n3);
\draw  (n2) -- (n4);
\draw  (n3) -- (n4);
\draw  (n1) -- (n5);

\draw [fill] (n1) circle [radius=.5mm];
\draw [fill] (n2) circle [radius=.5mm];
\draw [fill] (n3) circle [radius=.5mm];
\draw [fill] (n4) circle [radius=.5mm];
\draw [fill] (n5) circle [radius=.5mm];

\draw [->,dashed,thick,blue] (n4) edge[bend left] (n2);
\draw [->,dashed,thick,blue] (n5) arc [radius=3.5mm, start angle=450, end angle= 100]  (n5);
\draw [->,dashed,thick,blue] (n3) arc [radius=3.5mm, start angle=0, end angle= 340]  (n3);
\draw [->,dashed,thick,blue] (n2) edge[bend right] (n3);
\draw [->,dashed,thick,blue] (n1) edge[bend right] (n2);

\end{scope}

\begin{scope}[xshift=5cm]

\node [
label=below:{${\bot}$}, label=below:{ }] (n5)  {} ;
\node [above of=n5, label=right:{${b}$}, label=below:{} ] (n1)  {} ;
\node [above right of=n1,label=right:{${c}$}, label=below:{ }] (n2)  {} ;
\node [above left of=n1,label=left:{${a}$}, label=below:{ }] (n3)  {} ;
\node [above right of=n3,label=above:{${\top}$}, label=below:{ }] (n4)  {} ;

\draw  (n1) -- (n2);
\draw  (n1) -- (n3);
\draw  (n2) -- (n4);
\draw  (n3) -- (n4);
\draw  (n1) -- (n5);

\draw [fill] (n1) circle [radius=.5mm];
\draw [fill] (n2) circle [radius=.5mm];
\draw [fill] (n3) circle [radius=.5mm];
\draw [fill] (n4) circle [radius=.5mm];
\draw [fill] (n5) circle [radius=.5mm];

\draw [->,dashed,thick,red] (n5) arc [radius=3.5mm, start angle=450, end angle= 100]  (n5);
\draw [->,dashed,thick,red] (n3) edge[bend right] (n1);
\draw [->,dashed,thick,red] (n2) edge[bend right] (n3);
\draw [->,dashed,thick,red] (n1) edge[bend right] (n2);
\draw [->,dashed,thick,red] (n4) arc [radius=3.0mm, start angle=270, end angle= 610]  (n2);
\end{scope}
\end{tikzpicture}
\caption{The Hasse diagram of a modal Heyting algebra $\mathbf{M} = \langle \He, \square, \Diamond \rangle$. The behavior of the $\Diamond$ operator is depicted in red on the right, and the behavior of the $\square$ operator is shown in blue on the left.}
\label{fig1*}
\end{figure}

Then, the modal Nelson lattice $\mathbf{N}(\mathbf{M}) = \langle \RH{\He}, \blacksquare \rangle$ is shown in Figure \ref{fig2*}. Note that it is a centered Nelson lattice, i.e. it has a negation fixed point. In this example, since both $\square \bot = \bot$ and $\Diamond \bot = \bot$, the pair $(\bot, \bot)$ serves as a fixed point for the modal operators.

\begin{figure}[ht]
    \centering
\begin{tikzpicture}

\begin{scope}
\node [
label=below right:{${(\bot,\top)}$}, label=below:{} ] (n1)  {} ;
\node [above right of=n1,label=right:{${(\bot,c)}$}, label=below:{ }] (n2)  {} ;
\node [above left of=n1,label=left:{${(\bot,a)}$}, label=below:{ }] (n3)  {} ;
\node [above right of=n3,label=left:{${(\bot,b)}$}, label=below:{ }] (n4)  {} ;
\node [above of=n4, label=left:{${(\bot,\bot)}$}, label=below:{ }] (n5)  {} ;
\node [above of=n5, label=left:{${(b,\bot)}$}, label=below:{ }] (n6)  {} ;
\node [above right of=n6,label=right:{${(c,\bot)}$}, label=below:{ }] (n7)  {} ;
\node [above left of=n6,label=left:{${(a,\bot)}$}, label=below:{ }] (n8)  {} ;
\node [above right of=n8, label=above right:{${(\top,\bot)}$}, label=below:{ }] (n9)  {} ;
\draw  (n1) -- (n2);
\draw  (n1) -- (n3);
\draw  (n2) -- (n4);
\draw  (n3) -- (n4);
\draw  (n4) -- (n5);
 \draw  (n5) -- (n6);
\draw  (n6) -- (n7);
\draw  (n6) -- (n8);
\draw  (n7) -- (n9);
\draw  (n8) -- (n9);

\draw [fill] (n1) circle [radius=.5mm];
\draw [fill] (n2) circle [radius=.5mm];
\draw [fill] (n3) circle [radius=.5mm];
\draw [fill] (n4) circle [radius=.5mm];
\draw [fill] (n5) circle [radius=.5mm];
 \draw [fill] (n6) circle [radius=.5mm];
  \draw [fill] (n7) circle [radius=.5mm];
   \draw [fill] (n8) circle [radius=.5mm];
    \draw [fill] (n9) circle [radius=.5mm];

\draw [->,dashed,thick,blue] (n4) edge[bend left] (n2);
\draw [->,dashed,thick,blue] (n5) arc [radius=3.5mm, start angle=450, end angle= 100]  (n5);
\draw [->,dashed,thick,blue] (n8) arc [radius=3.5mm, start angle=0, end angle= 340]  (n8);
\draw [->,dashed,thick,blue] (n2) edge[bend left] (n3);
\draw [->,dashed,thick,blue] (n3) edge[bend left] (n4);
\draw [->,dashed,thick,blue] (n1) arc [radius=3.5mm, start angle=450, end angle= 100]  (n1);

\draw [->,dashed,thick,blue] (n6) edge[bend right] (n7);

\draw [->,dashed,thick,blue] (n7) edge[bend right] (n8);

\draw [->,dashed,thick,blue] (n9) edge[bend left] (n7);

\end{scope}

\begin{scope}[xshift=5cm]
\node [
label=below right:{${(\bot,\top)}$}, label=below:{} ] (n1)  {} ;
\node [above right of=n1,label=right:{${(\bot,c)}$}, label=below:{ }] (n2)  {} ;
\node [above left of=n1,label=left:{${(\bot,a)}$}, label=below:{ }] (n3)  {} ;
\node [above right of=n3,label=left:{${(\bot,b)}$}, label=below:{ }] (n4)  {} ;
\node [above of=n4, label=left:{${(\bot,\bot)}$}, label=below:{ }] (n5)  {} ;
\node [above of=n5, label=left:{${(b,\bot)}$}, label=below:{ }] (n6)  {} ;
\node [above right of=n6,label=right:{${(c,\bot)}$}, label=below:{ }] (n7)  {} ;
\node [above left of=n6,label=left:{${(a,\bot)}$}, label=below:{ }] (n8)  {} ;
\node [above right of=n8, label=above right:{${(\top,\bot)}$}, label=below:{ }] (n9)  {} ;
\draw  (n1) -- (n2);
\draw  (n1) -- (n3);
\draw  (n2) -- (n4);
\draw  (n3) -- (n4);
\draw  (n4) -- (n5);
 \draw  (n5) -- (n6);
\draw  (n6) -- (n7);
\draw  (n6) -- (n8);
\draw  (n7) -- (n9);
\draw  (n8) -- (n9);

\draw [fill] (n1) circle [radius=.5mm];
\draw [fill] (n2) circle [radius=.5mm];
\draw [fill] (n3) circle [radius=.5mm];
\draw [fill] (n4) circle [radius=.5mm];
\draw [fill] (n5) circle [radius=.5mm];
 \draw [fill] (n6) circle [radius=.5mm];
  \draw [fill] (n7) circle [radius=.5mm];
   \draw [fill] (n8) circle [radius=.5mm];
    \draw [fill] (n9) circle [radius=.5mm];

\draw [->,dashed,thick,red] (n4) edge[bend left] (n2);
\draw [->,dashed,thick,red] (n5) arc [radius=3.5mm, start angle=450, end angle= 100]  (n5);
\draw [->,dashed,thick,red] (n3) arc [radius=3.5mm, start angle=0, end angle= 340]  (n3);
\draw [->,dashed,thick,red] (n2) edge[bend left] (n3);
\draw [->,dashed,thick,red] (n6) edge[bend right] (n7);

\draw [->,dashed,thick,red] (n8) edge[bend right] (n6);

\draw [->,dashed,thick,red] (n7) edge[bend right] (n8);

\draw [->,dashed,thick,red] (n9) arc [radius=3.0mm, start angle=270, end angle= 610]  (n2);
\draw [->,dashed,thick,red] (n1) edge[bend right] (n2);

\end{scope}
\end{tikzpicture}
\caption{The Hasse diagram of the modal Nelson algebra $\mathbf{N}(\mathbf{M})$. The behavior of the $\blacksquare$ operator is depicted in blue on the left, and the behavior of its dual $\blacklozenge$ operator is shown in red on the right.}
\label{fig2*}
\end{figure}
\end{example}

As in the non-modal case, when representing a modal Nelson lattice as a twist structure, we obtain an embedding that is onto if and only if the Nelson lattice reduct is centered. To achieve an isomorphism, it is necessary to consider a boolean filter of the Heyting algebra that is compatible with the modal operators, similar to the approach in \cite{JanRivi2014}.

\begin{lemma} \label{ModalHeyting-MN3}
Let $\mathbf{M}=\langle \He, \square, \Diamond \rangle$ be a modal Heyting algebra, and let $F$ be a boolean filter satisfying the following condition:
\begin{align}
    \text{If } a \wedge b = \bot \text{ and } a \vee b \in F, \text{ then } \square a \vee \Diamond b \in F. \tag{F}\label{F}
\end{align}
Then, $\mathbf{N}(\mathbf{M}, F) = \langle \R{\He}{F}, \blacksquare\rangle$ is a subalgebra of $\mathbf{N}(\mathbf{M}) = \langle \RH{\He}, \blacksquare \rangle$.
\end{lemma}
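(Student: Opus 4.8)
The plan is to separate the claim into its non-modal and modal content. For the non-modal part, since $F\subseteq H$ always holds, Theorem \ref{TwistRepresentation}(4) (taking $F_1=F$ and $F_2=H$) already tells us that the Nelson-lattice reduct $\R{\He}{F}$ is a subalgebra of $\RH{\He}$, so $\RN{\He}{F}\subseteq \RN{\He}{H}$ and the set is closed under $\wedge,\vee,*,\To$ and the constants. By Theorem \ref{ModalHeyToModalNelson}, $\mathbf{N}(\mathbf{M})$ is a genuine modal Nelson lattice and the operators $\blacksquare,\blacklozenge$ act coordinatewise by the same formulas on both $\RH{\He}$ and $\R{\He}{F}$. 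Hence the only thing left to prove is that $\RN{\He}{F}$ is \emph{closed} under $\blacksquare$ and $\blacklozenge$; everything else is inherited.

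First I would fix an arbitrary $(x,y)\in \RN{\He}{F}$, which by definition means $x\wedge y=\bot$ and $x\vee y\in F$, and check that $\blacksquare(x,y)=(\square x,\Diamond y)$ again satisfies the two membership requirements. The orthogonality $\square x\wedge\Diamond y=\bot$ is immediate from the quasi-equation \eqref{mH'} of Lemma \ref{equivAxrule1} applied to the pair $x,y$. The join condition $\square x\vee\Diamond y\in F$ is precisely the content of hypothesis \eqref{F} applied with $a=x$ and $b=y$. Together these give $(\square x,\Diamond y)\in \RN{\He}{F}$.

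Next I would carry out the symmetric verification for $\blacklozenge(x,y)=(\Diamond x,\square y)$. Applying \eqref{mH'} to the (also orthogonal) pair $y,x$ yields $\square y\wedge\Diamond x=\bot$, i.e.\ $\Diamond x\wedge\square y=\bot$; and applying \eqref{F} with the roles reversed, $a=y$ and $b=x$, gives $\square y\vee\Diamond x\in F$, which by commutativity is $\Diamond x\vee\square y\in F$. Thus $(\Diamond x,\square y)\in \RN{\He}{F}$ as well, completing the closure argument and hence the proof that $\mathbf{N}(\mathbf{M},F)$ is a subalgebra of $\mathbf{N}(\mathbf{M})$.

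The computations are routine, so the crux is conceptual rather than technical: the point to highlight is that \eqref{mH'} — which is automatic from the ambient modal Heyting structure — only controls the meet (orthogonality) coordinate, whereas the filter $F$ need not be stable under the modal operators on its own. Condition \eqref{F} is exactly the extra hypothesis tailored to secure the join-membership coordinate, and the only subtlety worth stating explicitly is that it must be invoked with the two coordinates in opposite roles for $\blacksquare$ (with $a=x,b=y$) versus $\blacklozenge$ (with $a=y,b=x$).
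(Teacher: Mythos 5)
Your proof is correct and follows essentially the same route as the paper: reduce the non-modal part to Theorem \ref{TwistRepresentation}, then verify closure of $\RN{\He}{F}$ under $\blacksquare$ and $\blacklozenge$ by applying \eqref{mH'} for the meet condition and \eqref{F} for the join condition. The only cosmetic difference is that the paper handles $\blacklozenge$ by noting that $(\Diamond b,\square a)$ satisfies the same two symmetric membership conditions already established for $(\square a,\Diamond b)$, whereas you re-invoke \eqref{mH'} and \eqref{F} with the coordinates swapped — these are the same observation.
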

\begin{proof}
By Theorem \ref{TwistRepresentation}, it follows that the reduct $\langle \RN{\mathbf{H}}{F}, \wedge, \vee, *, \To, \bot, \top \rangle$ is a Nelson lattice, which is a subalgebra of $\RH{\He}$. 

Now, assume that $(a, b) \in \RN{\He}{F}$, then it follows that $a \wedge b = \bot$ and $a \vee b \in F$. Applying property \eqref{mH'} from Lemma \ref{equivAxrule1} and using Condition \eqref{F}, we obtain $\Box a \wedge \Diamond b = \bot$ and $\Box a \vee \Diamond b \in F$. Thus, $(\Box a, \Diamond b)\in \RN{\He}{F}$. 
\end{proof}

The previous results allow us to extend Sendlewski’s representation of Nelson lattices, introduced in Theorem \ref{TwistRepresentation}, by incorporating modal operators in a natural and straightforward manner. 

\begin{theorem}\label{isomorfismo}
Let $\mathbf{N} = \langle \A,\blacksquare\rangle$ be a modal Nelson lattice. Then, $\mathbf{N}$ is isomorphic to $\mathbf{N}(\mathbf{M}_\mathbf{N}^*, F^*)$, where $F^*$ is the filter defined by $F^*=\{(a \vee \sim a)^2 : a \in A \}$. 
\end{theorem}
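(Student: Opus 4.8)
The plan is to reduce the statement to the non-modal representation of Theorem \ref{MappingNelsonHeyting}(1) and then to check separately that the isomorphism it provides is compatible with the modal operators. Recall from Theorem \ref{MappingNelsonHeyting}(1) that $a \mapsto ([a]_\equiv, [\sim a]_\equiv)$ is a Nelson-lattice isomorphism from $\A$ onto $\RN{\A'/\equiv}{F^\equiv}$; composing it with the Heyting isomorphism $[a]_\equiv \mapsto a^2$ of Remark \ref{SpecialHeytingAlgebra} (which carries $F^\equiv$ onto $F^*$), I obtain a Nelson-lattice isomorphism
\[
e\colon \A \longrightarrow \RN{\He^*}{F^*}, \qquad e(a) = (a^2, (\sim a)^2),
\]
onto the filtered twist structure over $\He^*$. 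Since $\mathbf{M}_\mathbf{N}^* = \langle \He^*, \Box^*, \Diamond^* \rangle$ is a modal Heyting algebra by Lemma \ref{lemma Heytingfilter}, it then remains only to justify that $\mathbf{N}(\mathbf{M}_\mathbf{N}^*, F^*)$ is a well-defined modal Nelson lattice and that $e$ preserves $\blacksquare$ and $\blacklozenge$.

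For the well-definedness I would first check that $F^*$ (being the image of the Boolean filter $F^\equiv$, hence a Boolean filter of $\He^*$) satisfies condition \eqref{F} of Lemma \ref{ModalHeyting-MN3}. If $a, b \in H^*$ satisfy $a \wedge^* b = \bot$ and $a \vee^* b \in F^*$, then $(a,b)$ is an element of $\RN{\He^*}{F^*}$, so by the surjectivity of $e$ (equivalently, by Lemma \ref{SendlewskiBoolanResult}(2)) there is $c \in A$ with $c^2 = a$ and $(\sim c)^2 = b$. Using \eqref{mN2s}, \eqref{mN2d}, \eqref{mN1} and the involutivity of $\sim$, I then compute $\Box^* a = (\blacksquare c)^2$ and $\Diamond^* b = (\sim \blacksquare c)^2$, whence $\Box^* a \vee^* \Diamond^* b = (\blacksquare c \vee \sim \blacksquare c)^2$, which lies in $F^*$ by its very definition. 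By Lemma \ref{ModalHeyting-MN3} this makes $\mathbf{N}(\mathbf{M}_\mathbf{N}^*, F^*)$ a subalgebra of $\mathbf{N}(\mathbf{M}_\mathbf{N}^*)$, hence a modal Nelson lattice.

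The modal core of the argument is the verification that $e$ commutes with the box and the diamond. For the box I would compute, directly from the twist definition in Theorem \ref{ModalHeyToModalNelson} and the definitions $\Box^* x = (\blacksquare x)^2$, $\Diamond^* x = (\blacklozenge x)^2$,
\[
\blacksquare e(a) = (\Box^*(a^2),\, \Diamond^*((\sim a)^2)) = ((\blacksquare a^2)^2,\, (\blacklozenge(\sim a)^2)^2) = ((\blacksquare a)^2,\, (\blacklozenge \sim a)^2),
\]
the last equality by \eqref{mN2s} and \eqref{mN2d}; then \eqref{mN1} together with $\sim\sim a = a$ gives $\blacklozenge \sim a = \sim \blacksquare a$, so the right-hand side equals $((\blacksquare a)^2, (\sim \blacksquare a)^2) = e(\blacksquare a)$. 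The computation for the diamond is entirely symmetric, using $\sim \blacklozenge a = \blacksquare \sim a$ (again from \eqref{mN1} and involutivity) to identify the second coordinate. Since $e$ is a bijection onto $\RN{\He^*}{F^*}$ that preserves all the Nelson-lattice operations together with both modal operators, it is an isomorphism of modal Nelson lattices, as required.

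I expect the main obstacle to be the verification of condition \eqref{F} for $F^*$: it is the only place where the new filter $F^*$ must be matched against the modal operators, and it is also where one has to be careful about the distinction between the join $\vee$ of $\A$ and the join $\vee^*$ of $\He^*$, so that the pair $(a,b)$ is genuinely realized as $(c^2,(\sim c)^2)$ before the modal identities can be applied. By contrast, the preservation of $\blacksquare$ and $\blacklozenge$ themselves is short once \eqref{mN2s}, \eqref{mN2d} and \eqref{mN1} are invoked, and the Nelson-lattice part of the isomorphism is inherited wholesale from Theorem \ref{MappingNelsonHeyting}.
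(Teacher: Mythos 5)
Your proof is correct and follows essentially the same route as the paper: both reduce to the non-modal isomorphism $a \mapsto (a^2,(\sim a)^2)$ from Theorem \ref{MappingNelsonHeyting}(1) and then check preservation of $\blacksquare$ and $\blacklozenge$ via \eqref{mN2s}, \eqref{mN2d} and \eqref{mN1}. The only difference is in your favour: the paper obtains the MN-lattice structure on $\mathbf{N}(\mathbf{M}_\mathbf{N}^*,F^*)$ by citing Lemma \ref{ModalHeyting-MN3} without explicitly checking that $F^*$ satisfies condition \eqref{F}, whereas your verification of \eqref{F} via Lemma \ref{SendlewskiBoolanResult}(2) makes that step explicit.
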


\begin{proof}
Since $F^*$ contains the dense elements, it follows that $F^*$ is a boolean filter of $\mathbf{H}^*$. By Lemmas \ref{lemma Heytingfilter} and \ref{ModalHeyting-MN3}, we obtain that $\mathbf{N}(\mathbf{M}_\mathbf{N}^*, F^*)$ is an MN-lattice.

Define the mapping $h: A \to \RN{\He^*}{F^*}$ by 
\[
h(a) = (a^2, (\sim a)^2), \quad \text{for all } a \in A.
\]
We verify that $h$ is well defined: 
\begin{align*}
    a^2 \wedge^* (\sim a)^2 &= (a^2 \wedge (\sim a)^2)^2 = (a \wedge \sim a)^2 = \bot, \\
    a^2 \vee^* (\sim a)^2 &= (a^2 \vee (\sim a)^2)^2 = (a \vee \sim a)^2 \in F^*.
\end{align*}
By Theorem \ref{MappingNelsonHeyting}(1), we know that the Nelson lattice reduct $\A$ is isomorphic to $\R{\He^*}{F^*}$.

It remains to verify the preservation of the modal operator. For any $a \in A$, we have:
\begin{align*}
    h(\blacksquare a) &= ((\blacksquare a)^2, (\sim \blacksquare a)^2) \\
    &= ((\blacksquare a)^2, (\blacklozenge \sim a)^2) \\
    &= (\Box^* a^2, \Diamond^* (\sim a)^2) \\
    &= \blacksquare h(a).
\end{align*}

Thus, $h$ is an isomorphism of modal Nelson lattices.
\end{proof}

\begin{theorem} \label{Canonicalmapping}
    Let $\mathbf{M} = \langle \He, \square, \Diamond \rangle$ be a modal Heyting algebra, and let $F \subseteq H$ be a boolean filter satisfying Condition \eqref{F}. Consider the modal Nelson lattice $\mathbf{N}(\mathbf{M},F) = \langle \R{\He}{F},\blacksquare\rangle$. Then, $\mathbf{M}$ is isomorphic to $\mathbf{M}_{\mathbf{N}(\mathbf{M},F)}^* = \langle \He^*, \square^*, \Diamond^* \rangle$, where the latter is the Heyting algebra obtained from $\mathbf{N}(\mathbf{M},F)$ as described in Lemma \ref{lemma Heytingfilter}. The isomorphism $h\colon H \to H^*$ is given by  
    \[
    h(a) = (a, -a),
    \]
    for all $a \in H$. Moreover, $h[F] = F^*$.
\end{theorem}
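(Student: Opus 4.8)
The plan is to check directly that $h(a)=(a,-a)$ is a well-defined bijection from $H$ onto $H^*$ which preserves the Heyting operations and the two modal operators, and then to compute $h[F]$. The computational backbone is the behaviour of squaring in the twist structure: using the fusion of Theorem~\ref{TwistRepresentation}(1) one has $(x,y)^2=(x,x\rightharpoonup y)$, and since every element of $\RN{\He}{F}$ satisfies $x\wedge y=\bot$, the Heyting identity $x\wedge(x\rightharpoonup y)=x\wedge y$ forces $x\rightharpoonup y=-x$. Hence $(x,y)^2=(x,-x)$ for every $(x,y)\in\RN{\He}{F}$, so that
\[
H^*=\{(x,-x):x\in H,\ x\vee -x\in F\}.
\]
Once this is available, squaring simply replaces the second coordinate by the pseudocomplement of the first, which trivialises most of what follows.

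First I would verify well-definedness and bijectivity, and then that $h$ is a Heyting homomorphism. The pair $(a,-a)$ lies in $\RN{\He}{F}$ because $a\wedge -a=\bot$ automatically and $a\vee -a$ is dense (as $-(a\vee -a)=-a\wedge --a=\bot$), whence $a\vee -a\in D(\He)\subseteq F$ since $F$ is Boolean; thus $h$ maps into $H^*$. Injectivity is immediate from the first coordinate, and surjectivity is exactly the description of $H^*$ above. Each operation of $\He^*$ is the square of the corresponding twist operation (Remark~\ref{SpecialHeytingAlgebra}); applying the twist meet and join of Theorem~\ref{TwistRepresentation}(1) followed by the squaring identity, $h(x)\wedge^* h(y)$ collapses to $(x\wedge y,-(x\wedge y))=h(x\wedge y)$, and dually $h(x)\vee^* h(y)=h(x\vee y)$. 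For the implication I use $u\to v=u^2\To v$; since $h(x)$ is idempotent this is $(x,-x)\To(y,-y)$, and the twist residual gives $((x\rightharpoonup y)\wedge(-y\rightharpoonup -x),\,x\wedge -y)$. The contraposition inequality $x\rightharpoonup y\leq -y\rightharpoonup -x$, valid in every Heyting algebra, reduces the first coordinate to $x\rightharpoonup y$, and squaring yields $h(x\rightharpoonup y)$. The bounds $h(\top)=(\top,\bot)$ and $h(\bot)=(\bot,\top)$ are clear.

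For the modal operators, recall $\square^* u=(\blacksquare u)^2$ and $\Diamond^* u=(\blacklozenge u)^2$ on $H^*$ (Lemma~\ref{lemma Heytingfilter}), with $\blacksquare(x,y)=(\square x,\Diamond y)$ and $\blacklozenge(x,y)=(\Diamond x,\square y)$. Then $\square^* h(a)=(\square a,\Diamond(-a))^2$; since $a\wedge -a=\bot$, the quasi-equation~\eqref{mH'} gives $\square a\wedge\Diamond(-a)=\bot$, and the squaring identity turns this into $(\square a,-\square a)=h(\square a)$. The case of $\Diamond^*$ is symmetric, using $\square(-a)\wedge\Diamond a=\bot$. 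Finally, to see $h[F]=F^*$, I compute $F^*=\{(c\vee\sim c)^2:c\in\RN{\He}{F}\}$: for $c=(x,y)$ one has $\sim(x,y)=(y,x)$, so $(x,y)\vee(y,x)=(x\vee y,\bot)$ and its square is $(x\vee y,-(x\vee y))$; as $(x,y)$ ranges over $\RN{\He}{F}$ the element $x\vee y$ ranges over all of $F$ (take $(f,\bot)$ for $f\in F$), giving $F^*=\{(f,-f):f\in F\}=h[F]$.

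The only genuinely non-routine points are the identity $(x,y)^2=(x,-x)$ on $\RN{\He}{F}$ and the contraposition simplification $(x\rightharpoonup y)\wedge(-y\rightharpoonup -x)=x\rightharpoonup y$ inside the implication computation; once these two facts are secured, every remaining verification is a mechanical substitution.
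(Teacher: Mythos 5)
Your proposal is correct and takes essentially the same route as the paper: the paper's proof cites the known non-modal isomorphism (including $h[F]=F^*$) and then verifies preservation of the modal operators by exactly your computation, namely $\square^* h(a) = (\blacksquare(a,-a))^2 = (\square a, \Diamond(-a))^2 = (\square a, -\square a) = h(\square a)$, which rests on \eqref{mH'} together with the squaring identity $(x,y)^2=(x,-x)$. The only difference is one of detail, not of method: you re-derive from scratch the non-modal facts (well-definedness, bijectivity, preservation of the Heyting operations, and $h[F]=F^*$) that the paper simply invokes as already established.
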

\begin{proof} 
We know that the non-modal reducts are isomorphic under the map $h\colon H \to H^*$ defined by $h(a) = (a, -a)$ for all $a \in H$. Additionally, it is known that $h[F] = F^*$.

Now, for any $a \in H$, we have:
\[
h(\square a) = (\square a, -\square a) = (\blacksquare (a, -a))^2 = \square^*(a, -a) = \square^* h(a).
\]
The proof that $h(\Diamond a) = \Diamond^* h(a)$ follows analogously.
\end{proof}

These results leads us to state that the relationship between modal Nelson and modal Heyting algebras has a categorical nature. For the notions of category theory used in what follows we refer the reader to \cite{MacL98}.

Let denote by $\MN$ the category of modal Nelson residuated lattices and their homomorphisms and by $\mathbb{TW}$ the category whose objects are pairs $P=(\mathbf{M},F)$ of modal Heyting algebras $\mathbf{M}$ and boolean filters $F$ that satisfy Condition \eqref{F}. A morphism between two pairs $P_1=(\mathbf{M}_1,F_1)$ and $P_2=(\mathbf{M}_2,F_2)$ is a homormorphism of modal Heyting algebras $h\colon \mathbf{M_1}\to \mathbf{M_2}$ that satisfies $h[F_1]\subseteq F_2$. 

We can define the following functors:
\begin{itemize}
    \item $\mathbf{F}\colon \MN \to \mathbb{TW}$: For each $\mathbf{N}=\langle\A,\blacksquare\rangle \in \MN$, $\mathbf{F}(\mathbf{N}):=(\mathbf{M}_\mathbf{N}^*,F_\mathbf{N}^*)$, where $\mathbf{M}_\mathbf{N}^*$ is the modal Heyting algebra whose universe is $H_\A^* = \{  a^2 : a\in A \}$, and $F_\mathbf{N}^* = \{(a\vee \sim a)^2 : a \in A\}$.  \\
    For each morphism $g: \mathbf{N}_1 \to \mathbf{N}_2$ between modal Nelson algebras $\mathbf{N}_1=\langle\A_1,\blacksquare_1\rangle$ and $\mathbf{N}_2=\langle\A_2,\blacksquare_2\rangle$, we define the morphism $\mathbf{F}(g)\colon \mathbf{M}_{\mathbf{N}_1}^*\to \mathbf{M}_{\mathbf{N}_2}^* $ by the restriction $\mathbf{F}(g) := g\upharpoonright H_{\A_1}^*$. 
    \item $\mathbf{E}: \mathbb{TW} \to \MN$: For each $P=(\mathbf{M},F) \in \mathbb{TW}$, the modal Nelson lattice $\mathbf{E}(P):=\mathbf{N}(\mathbf{M},F)$. \\ 
    For each morphism $h:\mathbf{M}_1 \to \mathbf{M}_2$, we define $\mathbf{E}(h)\colon \mathbf{N}(\mathbf{M_1},F_1)\to \mathbf{N}(\mathbf{M_2},F_2)$ by:  
    \[
    \mathbf{E}(h)(x,y) := (h(x), h(y)).
    \] 
\end{itemize}

 Let $\mathbf{N}=\langle\A,\blacksquare\rangle$ be a modal Nelson lattice. By Theorem \ref{isomorfismo}, the map $\alpha_\mathbf{N}\colon \mathbf{N}\to \mathbf{N}(\mathbf{M_\mathbf{N}}^*,F_\mathbf{N}^*)$ defined by
    \[\alpha_\mathbf{N}(a)=(a^2,(\sim a)^2)\] is an isomorphism of modal Nelson lattices. On the other hand, let $P=(\mathbf{M},F)\in \mathbb{TW}$, the map $\beta_P:\mathbf{M}\to \mathbf{M}_{\mathbf{N}(\mathbf{M},F)}^*$ defined by
    \[\beta_P(a)=(a,-a)\]
is an isomorphism of modal Heyting algebra that satisfies $\beta_P[F]=F_{\mathbf{N}(\mathbf{M},F)}^*$.
Moreover, diagrams in Figure \ref{fig:comm-diagrams} commute.
\begin{figure}[h]
    \centering
    \begin{tikzcd}
        \mathbf{N}_1 \arrow[r,"\alpha_{\mathbf{N}_1}"] \arrow[d,"g"] & \mathbf{N}(\mathbf{M}_{\mathbf{N}_1}^*, F_{\mathbf{N}_1}^*) \arrow[d,"\mathbf{E}\circ\mathbf{F}(g)"] \\ 
        \mathbf{N}_2 \arrow[r,"\alpha_{\mathbf{N}_2}"] & \mathbf{N}(\mathbf{M}_{\mathbf{N}_2}^*,F_{\mathbf{N}_2}^*)
    \end{tikzcd}
    \hspace{2cm}
    \begin{tikzcd}
        \mathbf{M_1} \arrow[r,"\beta_{P_1}"] \arrow[d,"h"] & \mathbf{M}_{\mathbf{N}(\mathbf{M_1},F_1)}^* \arrow[d,"\mathbf{F}\circ\mathbf{E}(h)"] \\ 
        \mathbf{M_2} \arrow[r,"\beta_{P_2}"] & \mathbf{M}_{\mathbf{N}(\mathbf{M_2},\mathbf{F_2})}^*
    \end{tikzcd}
    \caption{Commutative diagrams for $\mathbf{E}$ and $\mathbf{F}$ where $\mathbf{N}_1=\langle\A_1,\blacksquare_1\rangle$, $\mathbf{N}_2=\langle\A_2,\blacksquare_2\rangle$, and $P_1=( \mathbf{M}_1,F_1)$ and $P_2=( \mathbf{M}_2,F_2)$.}
    \label{fig:comm-diagrams}
\end{figure}

Thus, we conclude:
\begin{theorem}
   The categories $\MN$ and $\mathbb{TW}$ are equivalent. 
\end{theorem}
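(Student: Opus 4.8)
The plan is to invoke the standard criterion that an equivalence of categories is witnessed by a pair of functors $\mathbf{F}\colon \MN \to \mathbb{TW}$ and $\mathbf{E}\colon \mathbb{TW} \to \MN$ together with natural isomorphisms $\alpha\colon \Id_{\MN} \To \mathbf{E}\circ\mathbf{F}$ and $\beta\colon \Id_{\mathbb{TW}} \To \mathbf{F}\circ\mathbf{E}$. The candidate functors and the candidate components $\alpha_\mathbf{N}(a)=(a^2,(\sim a)^2)$ and $\beta_P(a)=(a,-a)$ have already been written down, and Theorems \ref{isomorfismo} and \ref{Canonicalmapping} guarantee that each $\alpha_\mathbf{N}$ and each $\beta_P$ is an isomorphism in the respective category (with $\beta_P[F]=F^*_{\mathbf{N}(\mathbf{M},F)}$ ensuring that $\beta_P$ is indeed a $\mathbb{TW}$-morphism). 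Thus the theorem reduces to two tasks: checking that $\mathbf{F}$ and $\mathbf{E}$ are genuinely functors, and checking that $\alpha$ and $\beta$ are natural.

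First I would verify functoriality. For $\mathbf{F}$, the key observation is that any Nelson-lattice homomorphism $g$ preserves the fusion $*$, hence $g(a^2)=g(a)^2$; therefore $g$ carries the $2$-potent elements $H^*_{\A_1}=\{a^2:a\in A_1\}$ into $H^*_{\A_2}$, so the restriction $\mathbf{F}(g)=g\upharpoonright H^*_{\A_1}$ is well defined. Since $\Box^* a=(\blacksquare a)^2$ and $\Diamond^* a=(\blacklozenge a)^2$ are defined through squaring and the modal operators, both of which $g$ preserves, $\mathbf{F}(g)$ respects $\Box^*$ and $\Diamond^*$; and $g((a\vee\sim a)^2)=(g(a)\vee\sim g(a))^2$ gives $\mathbf{F}(g)[F^*_{\mathbf{N}_1}]\subseteq F^*_{\mathbf{N}_2}$, so $\mathbf{F}(g)$ is a $\mathbb{TW}$-morphism. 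For $\mathbf{E}$, a $\mathbb{TW}$-morphism $h\colon(\mathbf{M}_1,F_1)\to(\mathbf{M}_2,F_2)$ preserves $\wedge$, $\vee$, $\bot$ and satisfies $h[F_1]\subseteq F_2$; hence if $(x,y)\in \RN{\He_1}{F_1}$ then $h(x)\wedge h(y)=\bot$ and $h(x)\vee h(y)\in F_2$, so $\mathbf{E}(h)(x,y)=(h(x),h(y))$ lands in $\RN{\He_2}{F_2}$, and it preserves all the twist operations of Theorem \ref{TwistRepresentation}(1) together with $\blacksquare,\blacklozenge$, since these are computed coordinatewise from the Heyting and modal structure that $h$ preserves. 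Preservation of identities and of composition is immediate in both cases, as $\mathbf{F}(g)$ is a restriction of $g$ and $\mathbf{E}(h)$ acts coordinatewise.

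Next I would establish naturality, which is precisely the commutativity of the two squares in Figure \ref{fig:comm-diagrams}. For $\alpha$ and a morphism $g\colon \mathbf{N}_1\to\mathbf{N}_2$, chasing $a\in A_1$ along both paths gives $\alpha_{\mathbf{N}_2}(g(a))=(g(a)^2,(\sim g(a))^2)$ and $\mathbf{E}(\mathbf{F}(g))(a^2,(\sim a)^2)=(g(a^2),g((\sim a)^2))=(g(a)^2,(\sim g(a))^2)$, which coincide (using that $g$ preserves $\sim$ and squaring). For $\beta$ and a $\mathbb{TW}$-morphism $h$, both paths send $a$ to $(h(a),-h(a))$, using that a Heyting homomorphism preserves the pseudocomplement $-$. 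Since every component of $\alpha$ and of $\beta$ is an isomorphism, both natural transformations are natural isomorphisms, and the equivalence $\MN\simeq\mathbb{TW}$ follows.

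The main obstacle is not any single hard computation but making the functoriality bookkeeping airtight: one must confirm that restricting $g$ to $H^*$ genuinely yields a morphism of modal Heyting algebras (i.e.\ that the induced operators $\Box^*,\Diamond^*$ are respected) and that the coordinatewise map $\mathbf{E}(h)$ respects the twist-level modal operators $\blacksquare,\blacklozenge$. Both hinge on the compatibility of $g$ and $h$ with squaring and with negation, which is exactly what was used to define $\mathbf{M}_\mathbf{N}^*$ and the twist operators in the first place; once these preservation facts are in hand, the object-level isomorphisms of Theorems \ref{isomorfismo} and \ref{Canonicalmapping} together with the diagram chases above complete the argument.
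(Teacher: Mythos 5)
Your proof is correct and takes essentially the same approach as the paper: the same functors $\mathbf{F}$ and $\mathbf{E}$, with the same components $\alpha_\mathbf{N}(a)=(a^2,(\sim a)^2)$ and $\beta_P(a)=(a,-a)$ supplied by Theorems \ref{isomorfismo} and \ref{Canonicalmapping}. The only difference is one of detail, not of route: you spell out the functoriality checks and the naturality diagram chases that the paper leaves implicit in its assertion that the diagrams of Figure \ref{fig:comm-diagrams} commute.
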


\begin{proof}
    Let $\mathbf{N} $ be a modal Nelson lattice. From Theorem \ref{isomorfismo}, it follows that
the map $\alpha_\mathbf{N} \colon \mathbf{N} \to \mathbf{N}(\mathbf{M}_\mathbf{N}^*,F_\mathbf{N}^*)$ defines a natural isomorphism between $\mathbf{E} \circ \mathbf{F}$ and
$\Id_\MN$ . On the other hand, let $P=(\mathbf{M},F)$ be a modal Heyting algebra with a boolean filter. Then, from Theorem \ref{Canonicalmapping}, we get that the map $\beta_P \colon \mathbf{M} \to \mathbf{M}_{\mathbf{N}(\mathbf{M},F)}^*$ leads
to a natural isomorphism between $\mathbf{F} \circ \mathbf{E}$ and $\Id_{\mathbb{TW}}$. This concludes the proof.
\end{proof}

\section{Two interesting subvarieties of \texorpdfstring{$\mathcal{MN}$}{MN}}\label{sec5}

This section will examine two specific subvarieties of $\mathcal{MN}$ whose non-modal reducts correspond to important subvarieties of Nelson lattices.

Following the literature, we define the following three unary term functions for each Nelson lattice $\A$:
\begin{align*}
    \nabla(x) &= \sim(\sim x^2)^2, \\
    \Delta(x) &= (\sim(\sim x)^2)^2, \\
    \phi(x) &= \Delta(x) \wedge (\nabla(x \vee \sim x) \vee x).
\end{align*}
Now, let $\He$ be a Heyting algebra and consider the corresponding Nelson lattice $\A = \RH{\He}$. Computing the operators defined above on $\A$, we obtain:
\begin{align}
    \nabla(x,y) &= (- - x, - x),\\
    \Delta(x,y) &= (- y, - - y), \\
    \phi(x,y) &= (- - x, - - y). \label{fipares}
\end{align}
Here, as before, the negation operator $-$ is given by $- x = x \rightharpoonup \bot$.

We recall the following theorem, which endows the image of $\phi$ with Nelson lattice operations.

\begin{theorem} \cite[Theorem 5.4]{Busaniche} \label{PhiHomo}
Let $\A = \langle A,\vee,\wedge,*,\To,\top,\bot\rangle$ be a Nelson lattice. Then $\Phi(\A)= \langle \phi(A),\vee',\wedge',*,\To,\top,\bot\rangle$ is Nelson lattice where
$$\phi(A)= \{y \in A \  | \ y = \phi(x) \mbox{ for some } x \in A \} $$
and for each $\star \in \{\vee,\wedge \}$ the operation $\star'$ is given by $x \star' y = \phi(x \star y)$. In addition, it also results that $\phi$ is a homomorphism from $\A$ onto $\Phi(\A)$.
\end{theorem}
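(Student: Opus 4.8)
The plan is to show that $\phi$ is a surjective homomorphism from $\A$ onto $\Phi(\A)$, and then to deduce that $\Phi(\A)$ is a Nelson lattice for free, since Nelson lattices form a variety and varieties are closed under homomorphic images. By Theorem~\ref{TwistRepresentation}(3) the term functions $\nabla,\Delta,\phi$ are preserved under the isomorphism $\A \cong \R{\He}{F}$, so I would work throughout in the twist representation, where $\phi(x,y) = (--x,--y)$ by \eqref{fipares} and $-$ denotes the pseudocomplement of $\He$. All the verifications then reduce to the standard double-negation identities in Heyting algebras: $--(a\wedge b) = --a \wedge --b$, $--(a\vee b) = --(--a\vee --b)$, and $--(a\rightharpoonup b) = --a \rightharpoonup --b = a \rightharpoonup --b$ (together with the facts that meets of regular elements are regular and that $a\rightharpoonup b$ is regular whenever $b$ is).

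The core is checking that $\phi$ preserves each operation. For $\wedge$ and $\vee$ this is almost the definition of $\wedge'$ and $\vee'$: one only has to verify $\phi\big(\phi(x)\vee\phi(y)\big) = \phi(x\vee y)$ and the dual, which follow immediately from $--(--a\vee --s) = --(a\vee s)$ and $--(--b\wedge --t) = --(b\wedge t)$. For the residuated operations $*$ and $\To$, expanding the twist formulas from Theorem~\ref{TwistRepresentation}(1) and applying the identity $--(a\rightharpoonup b) = --a\rightharpoonup --b$ coordinatewise yields $\phi(x*y) = \phi(x)*\phi(y)$ and $\phi(x\To y) = \phi(x)\To\phi(y)$, where the right-hand sides are computed with the inherited operations of $\A$. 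Finally $\phi(\top) = (\top,\bot) = \top$ and $\phi(\bot) = \bot$. A convenient bonus of proving these identities first is that closure of $\phi(A)$ under $*$ and $\To$ comes for free: since $\phi(x)*\phi(y) = \phi(x*y)$ and every element of $\phi(A)$ has the form $\phi(z)$, the $*$- and $\To$-products of elements of $\phi(A)$ again land in $\phi(A)$, so $\Phi(\A)$ is a genuine algebra of the Nelson-lattice similarity type.

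With these facts in hand, $\phi\colon \A \to \Phi(\A)$ is a homomorphism, and it is onto by the very definition of $\phi(A)$ as the image of $\phi$; hence $\Phi(\A)$, being a homomorphic image of the Nelson lattice $\A$, is a Nelson lattice. The main obstacle is entirely contained in the residuated operations: verifying that $\phi$ respects $*$ and $\To$ (and hence that $\phi(A)$ is closed under them) depends on the non-trivial behaviour of double negation with respect to $\rightharpoonup$, whereas the lattice part is essentially bookkeeping built into the definitions of $\vee'$ and $\wedge'$. Alternatively, one could identify $\phi(A)$ explicitly with the twist structure $\R{\mathbf{B}}{F'}$ over the Boolean algebra $\mathbf{B}$ of regular elements of $\He$ and invoke Theorem~\ref{TwistRepresentation}(1) directly, but the homomorphic-image argument is shorter.
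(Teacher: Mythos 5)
Note first that the paper does not prove this statement at all: it is imported verbatim from \cite[Theorem 5.4]{Busaniche}, so there is no internal proof to compare your argument against; what follows is an assessment of your proposal on its own terms. Your proof is correct. The reduction to the twist representation is legitimate: by Theorem \ref{TwistRepresentation}(3) we may assume $\A=\R{\He}{F}$, and since $\phi$ is a term function it is computed in this subalgebra of $\RH{\He}$ by the same formula \eqref{fipares} --- this last remark is what actually justifies your use of \eqref{fipares}, which the paper derives only for the full twist product $\RH{\He}$, so it is worth saying explicitly rather than appealing to ``preservation under isomorphism'' alone. The Heyting-algebra identities you invoke are exactly Glivenko's theorem: $a\mapsto --a$ preserves $\wedge$ and $\rightharpoonup$ and satisfies $--(a\vee b)=--(--a\vee --b)$, from which $\phi(x*y)=\phi(x)*\phi(y)$ and $\phi(x\To y)=\phi(x)\To\phi(y)$ hold on the nose (coordinatewise, using the twist formulas of Theorem \ref{TwistRepresentation}(1)), while the failure of $--$ to commute with $\vee$ is absorbed by the outer $\phi$ built into the definitions of $\vee'$ and $\wedge'$; closure of $\phi(A)$ under $*$ and $\To$ does indeed fall out of the first two identities, as you observe. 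Finally, the variety argument closes the proof correctly: the class of Nelson lattices is equationally definable (the paper records \eqref{eq:RL1}--\eqref{eq:RL3} precisely so that residuation is equational, and involutivity and the Nelson condition are identities), and what you exhibit is a surjective homomorphism $\phi\colon \A\to\Phi(\A)$ between algebras of the same similarity type --- $\phi(x\vee y)=\phi(x)\vee'\phi(y)$, $\phi(x\wedge y)=\phi(x)\wedge'\phi(y)$, the residuated operations are preserved strictly, and the constants are fixed --- so $\Phi(\A)$ lies in the variety generated by $\A$ and is therefore a Nelson lattice.
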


\subsection{Modal \texorpdfstring{$\phi$}{phi}-normal Nelson lattices}

First, we consider the variety of $\phi$-normal Nelson lattices, introduced in \cite{Gor85}\footnote{In fact, the original authors use the term \emph{normal Nelson algebras}. We instead introduce the prefix $\Phi$ to avoid confusion with another use of the term \emph{normal} that occurs earlier in our paper.} in the context of Nelson algebras, characterized by the equation:
\begin{equation}\label{NormalN3}
    \nabla x  = \Delta x.
\end{equation}
which is equivalent to:
\begin{equation*}
    \sim x^2 \To x^2 = (\sim x \To x)^2.
\end{equation*}

By Corollary 5.9 in \cite{Busaniche}, we know that a Nelson lattice $\A$ satisfies \ref{NormalN3} if and only if $\Phi(\A)$ is a Boolean algebra. In \cite{Busaniche}, the authors proved the following lemma and theorem, which will be useful in this section.

\begin{lemma} (\cite[Lemma 6.2]{Busaniche})  \label{DensequivHeyting}
Let $\He$ be a Heyting algebra. The following are equivalent conditions for all $x,y\in H$:
\begin{enumerate}
    \item $x\wedge y=\bot$ and $x\vee y\in D(\He)$, 
    \item $x\wedge y=\bot$ and $- x\wedge - y=\bot$,
    \item $- x= - - y$,
    \item $- x \rightharpoonup x = - y$.
\end{enumerate}
\end{lemma}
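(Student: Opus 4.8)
The plan is to chain the four conditions together as (1)$\iff$(2)$\iff$(3)$\iff$(4). The first equivalence is essentially free: conditions (1) and (2) share the conjunct $x\wedge y=\bot$, and their second conjuncts coincide because the De Morgan law $-(x\vee y)=-x\wedge -y$ holds in every Heyting algebra. Hence $x\vee y\in D(\He)$, i.e. $-(x\vee y)=\bot$, is literally the same statement as $-x\wedge -y=\bot$. So the real content lies in linking these ``negation'' conditions to the algebraic identities (3) and (4).

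For (2)$\iff$(3) I would work entirely with the residuation fact $a\wedge b=\bot \iff a\leq -b$ together with the antitonicity of $-$. Assuming (2): from $x\wedge y=\bot$ we get $x\leq -y$, and applying $-$ yields $--y\leq -x$; from $-x\wedge -y=\bot$ we get $-x\leq --y$; combining these gives $-x=--y$, which is (3). Conversely, from $-x=--y$ the standard identity $-z\wedge --z=\bot$ (at $z=y$) gives $-x\wedge -y=--y\wedge -y=\bot$, while $y\leq --y=-x$ forces $x\wedge y\leq x\wedge -x=\bot$; this recovers (2).

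The step I expect to carry the most weight is (3)$\iff$(4), because (4) involves the implication $-x\rightharpoonup x$ rather than a bare pseudocomplement. The key auxiliary fact I would isolate first is the identity $-x\rightharpoonup x=--x$, valid in any Heyting algebra: the inequality $-x\rightharpoonup x\geq -x\rightharpoonup\bot=--x$ follows from $\bot\leq x$, and the reverse follows from $-x\wedge(-x\rightharpoonup x)\leq x\wedge -x=\bot$, which by residuation gives $-x\rightharpoonup x\leq --x$. With this identity condition (4) reads $--x=-y$. Finally, applying $-$ to the equation in (3) and using the triple-negation law $---z=-z$ turns $-x=--y$ into $--x=-y$, and symmetrically in the other direction, so (3) and (4) are equivalent. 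Apart from the identity $-x\rightharpoonup x=--x$, every manipulation is a routine consequence of residuation, antitonicity of $-$, and the standard facts $z\leq --z$ and $z\wedge -z=\bot$; that single identity is the only genuinely non-mechanical ingredient.
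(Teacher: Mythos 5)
Your proposal is correct, but there is nothing in the paper to compare it against: the paper states this lemma purely as a citation (\cite[Lemma 6.2]{Busaniche}) and supplies no proof of its own. Taken on its own merits, your argument is a complete, self-contained derivation: the equivalence (1)$\iff$(2) via the intuitionistically valid De Morgan law $-(x\vee y)=-x\wedge -y$ is exactly right; the passage (2)$\iff$(3) uses only residuation ($x\wedge y=\bot \iff x\leq -y$), antitonicity of $-$, and the facts $y\leq --y$ and $-y\wedge --y=\bot$, all of which you apply correctly; and your key auxiliary identity $-x\rightharpoonup x=--x$ is valid, with both inequalities justified soundly (monotonicity of $\rightharpoonup$ in its second argument for one direction, and $-x\wedge(-x\rightharpoonup x)\leq x\wedge -x=\bot$ plus residuation for the other). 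With that identity in hand, reading (4) as $--x=-y$ and passing between (3) and (4) by applying $-$ and the triple-negation law $---z=-z$ closes the chain. This is presumably close in spirit to the original proof in Busaniche--Cignoli, but as far as this paper is concerned your write-up fills a gap rather than duplicating one.
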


\begin{theorem} (\cite[Theorem 6.3]{Busaniche}) \label{CharacNormalNelsonAlgebras}
A Nelson lattice $\A$ satisfies $\nabla x=\Delta x$ if and only if there exists a Heyting algebra $\He$ such that $\A$ is isomorphic to $\R{\He}{D(\He)}$. 
\end{theorem}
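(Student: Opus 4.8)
The plan is to reduce the equation $\nabla x = \Delta x$ to a condition on the Boolean filter appearing in the twist representation, and then to invoke Lemma \ref{DensequivHeyting} to identify that filter with $D(\He)$. The crucial computational input is that on the twist structure one has $\nabla(x,y) = (--x,-x)$ and $\Delta(x,y) = (-y,--y)$; since $\nabla$ and $\Delta$ are term functions, these formulas restrict to any subalgebra $\R{\He}{F}$ of $\RH{\He}$, and every element of a Nelson lattice presented in this way is a pair $(x,y)$ with $x \wedge y = \bot$ and $x \vee y \in F$.

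For the implication from right to left, suppose $\A \cong \R{\He}{D(\He)}$ and take an arbitrary element $(x,y)$. By definition $x \wedge y = \bot$ and $x \vee y \in D(\He)$, so condition (1) of Lemma \ref{DensequivHeyting} holds; its equivalence with condition (3) yields $-x = --y$. Applying the Heyting negation to this identity and using $---y = -y$ gives $--x = -y$. Together these two equalities say exactly that $(--x,-x) = (-y,--y)$, i.e.\ $\nabla(x,y) = \Delta(x,y)$, which is the desired equation.

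For the converse, assume $\A$ satisfies $\nabla x = \Delta x$ and use Theorem \ref{TwistRepresentation}(3) to write $\A \cong \R{\He}{F}$ for a Heyting algebra $\He$ and a Boolean filter $F$. Since $F$ is Boolean we already have $D(\He) \subseteq F$, so it suffices to prove the reverse inclusion $F \subseteq D(\He)$. The key move is to feed suitable pairs into the hypothesis: for $f \in F$ the pair $(f,\bot)$ lies in $\R{\He}{F}$, because $f \wedge \bot = \bot$ and $f \vee \bot = f \in F$. Evaluating the hypothesis on this pair, $\nabla(f,\bot) = (--f,-f)$ while $\Delta(f,\bot) = (\top,\bot)$, and comparing the second coordinates forces $-f = \bot$, i.e.\ $f \in D(\He)$. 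Hence $F = D(\He)$ and $\A \cong \R{\He}{D(\He)}$.

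The main thing to get right is the bookkeeping on the twist structure: one must verify that the chosen test pairs genuinely belong to $\R{\He}{F}$ and that the two coordinates of $\nabla$ and $\Delta$ match up with the correct clauses of Lemma \ref{DensequivHeyting}. No deeper obstacle arises, since the heavy lifting—the term-function formulas for $\nabla$ and $\Delta$ and the fourfold equivalence characterizing density—is already available from the cited results. Alternatively, one could route the argument through Theorem \ref{PhiHomo} and the fact that $\nabla x = \Delta x$ holds precisely when $\Phi(\A)$ is a Boolean algebra, but the direct filter computation above is shorter and more transparent.
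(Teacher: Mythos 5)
Your proof is correct, and it follows what is essentially the intended route: the paper itself does not prove this statement (it is quoted from Busaniche--Cignoli, Theorem 6.3), but the key ingredient it records immediately beforehand, Lemma \ref{DensequivHeyting}, is exactly what you use, together with the twist representation of Theorem \ref{TwistRepresentation}(3). Both directions check out --- the equivalence $(1)\Leftrightarrow(3)$ of the lemma plus $---y=-y$ handles the ``if'' direction, and your test pairs $(f,\bot)$ genuinely lie in $\R{\He}{F}$ and force $-f=\bot$, so $F=D(\He)$ --- and your implicit appeals to $\nabla,\Delta$ being term functions (hence restricting to subalgebras and transferring along isomorphisms) are sound.
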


A \emph{modal $\phi$-normal Nelson lattice} is a modal Nelson lattice $\mathbf{N}=\langle\A,\blacksquare\rangle$ that satisfies equation \eqref{NormalN3}. 

Now, consider a modal Heyting algebra $\mathbf{M}=\langle \He,\square,\Diamond\rangle$ such that, for all $a\in H$, the following conditions hold:
\begin{align}
    - - \square a &= - \Diamond - a,\label{N1}\\
    - \square - a &= - - \Diamond a.\label{N2}
\end{align}

\begin{lemma} \label{normal1}
Let $\mathbf{M}=\langle \He, \square,\Diamond\rangle$ be a modal Heyting algebra satisfying equations \eqref{N1} and \eqref{N2}. Then $\mathbf{N}(\mathbf{M},D(\He))$ is a subalgebra of $\mathbf{N}(\mathbf{M})$.
\end{lemma}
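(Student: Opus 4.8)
The plan is to reduce the statement to an application of Lemma \ref{ModalHeyting-MN3}. That lemma guarantees that $\mathbf{N}(\mathbf{M}, F)$ is a subalgebra of $\mathbf{N}(\mathbf{M})$ whenever $F$ is a Boolean filter satisfying condition \eqref{F}. Since $D(\He)$ is by definition the filter of dense elements and every filter containing $D(\He)$ is Boolean, $D(\He)$ is itself a Boolean filter (indeed the least one). Hence the whole problem collapses to verifying that $D(\He)$ satisfies \eqref{F}: if $a \wedge b = \bot$ and $a \vee b \in D(\He)$, then $\square a \vee \Diamond b \in D(\He)$.

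First I would translate membership in $D(\He)$ into a negation statement: $\square a \vee \Diamond b \in D(\He)$ is equivalent to $-(\square a \vee \Diamond b) = \bot$, and since $-$ turns joins into meets this is the same as $-\square a \wedge -\Diamond b = \bot$. I would also unpack the hypothesis using Lemma \ref{DensequivHeyting}: the pair of conditions $a \wedge b = \bot$ and $a \vee b \in D(\He)$ is equivalent to $a \wedge b = \bot$ together with $-a \wedge -b = \bot$, so in particular $-a \wedge -b = \bot$ becomes available.

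The key algebraic step is to rewrite the two negated modal terms using the hypotheses \eqref{N1} and \eqref{N2} together with the triple-negation law $-x = ---x$. Applying $-$ to \eqref{N1} (with variable $a$) gives $-\square a = ---\square a = --\Diamond -a$, and similarly applying $-$ to \eqref{N2} (with variable $b$) gives $-\Diamond b = ---\Diamond b = --\square -b$. Using the standard Heyting identity $--x \wedge --y = --(x \wedge y)$, I would then combine these to obtain
\[
-\square a \wedge -\Diamond b = --\Diamond -a \wedge --\square -b = --(\Diamond -a \wedge \square -b).
\]
At this point the goal becomes $--(\Diamond -a \wedge \square -b) = \bot$, which (since $--z = \bot$ iff $z = \bot$) is equivalent to $\Diamond -a \wedge \square -b = \bot$. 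This last identity is exactly an instance of the basic modal axiom \eqref{mH'}: from $-a \wedge -b = \bot$, established above, we get $\square(-b) \wedge \Diamond(-a) = \bot$.

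The main obstacle—really the only nontrivial idea—is recognizing that one should not try to compare $\Diamond b$ with $\Diamond -a$ directly (no monotonicity is assumed, so the naive inequality fails and even points in the wrong direction), but instead push everything through double negations via \eqref{N1}, \eqref{N2} and the triple-negation law, so that the residual content reduces to the already-available axiom \eqref{mH'} applied to the regular parts $-a, -b$. Once this chain of rewrites is set up, every remaining step is a routine Heyting-algebra identity, and the conclusion $\square a \vee \Diamond b \in D(\He)$ follows, completing the verification of \eqref{F} and hence the lemma via Lemma \ref{ModalHeyting-MN3}.
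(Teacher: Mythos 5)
Your proof is correct, and it shares the paper's overall skeleton — reduce the lemma to checking condition \eqref{F} for the filter $D(\He)$ and conclude via Lemma \ref{ModalHeyting-MN3}, using Lemma \ref{DensequivHeyting} together with \eqref{N1} and \eqref{N2} to control the negated modal terms — but the middle of the argument runs along a genuinely different track. The paper uses clause (3) of Lemma \ref{DensequivHeyting}: from the hypothesis it extracts the single identity $-a = --b$, substitutes it into $-\square a = --\Diamond -a$ (the negated form of \eqref{N1}), and chains equalities through \eqref{N1} and \eqref{N2} until it reaches $-\square a = --\Diamond b$; reading the same lemma back in the direction (3)$\Rightarrow$(1) then delivers both $\square a \wedge \Diamond b = \bot$ and $\square a \vee \Diamond b \in D(\He)$ at once, with no appeal to \eqref{mH'} anywhere. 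You instead use clause (2), $-a \wedge -b = \bot$, rewrite $-\square a = --\Diamond -a$ and $-\Diamond b = --\square -b$, pull the double negation out of the meet via $--x \wedge --y = --(x\wedge y)$, and close by applying the defining axiom \eqref{mH'} to the regular elements $-b$, $-a$. Both arguments are sound and of comparable length; the paper's is purely equational in \eqref{N1}--\eqref{N2} and slightly more self-contained, while yours makes explicit a conceptual point the paper's computation leaves hidden, namely that condition \eqref{F} for the dense filter is exactly the axiom \eqref{mH'} transported along double negation to the regular parts.
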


\begin{proof}
Let $a,b\in H$ such that $a\wedge b=\bot$ and $a\vee b\in D(\He)$. We will prove that $\square a\vee \Diamond b\in D(\He)$. By Lemma \ref{DensequivHeyting}, we get that $- a=-- b$. By equation \eqref{N1}, we have that $-\square a=--\Diamond -a$. Replacing $-a$, we get $-\square a=--\Diamond --b$. By equation \eqref{N2}, we have $--\Diamond --b=-\square -b=--\Diamond b$. Thus, $-\square a=--\Diamond b$ and by Lemma \ref{DensequivHeyting}, we get $\square a\vee \Diamond b\in D(\He)$. Therefore, by Theorem \ref{ModalHeyting-MN3}, $\mathbf{N}(\mathbf{M},D(\He))$ is a modal Nelson lattice.
\end{proof}

\begin{lemma}\label{normal2}
Let $\mathbf{N}=\langle \A,\blacksquare\rangle$ be a modal Nelson lattice, and let
\[
\mathbf{M}^*=\langle \mathbf{H}^*,\square^*,\Diamond^*\rangle
\]
be the associated modal Heyting algebra. Then $\mathbf{M}^*$ satisfies Equations \eqref{N1} and \eqref{N2} if and only if the filter $D(\He^*)$ satisfies Condition~\eqref{F}.
\end{lemma}

\begin{proof}
Suppose that $D(\He^*)$ satisfies Condition~\eqref{F}. We will prove that $\mathbf{M}^*=\langle \He^*,\square^*,\Diamond^*\rangle$ satisfies equations \eqref{N1} and \eqref{N2}. Let $a\in H^*$. On the one hand, we have $-^*-^* \square^* a=-^*(\sim (\blacksquare a)^2 )^2=(\sim (\sim (\blacksquare a)^2 )^2)^2$. On the other hand $-^*\Diamond^*-^* a=-^*(\blacklozenge (\sim a)^2)^2=-^*(\blacklozenge \sim a)^2=(\sim (\blacklozenge \sim a)^2)^2=(\sim (\sim \blacksquare a)^2)^2$. By equation \eqref{NormalN3}, $-^*-^* \square^* a=(\sim (\sim (\blacksquare a)^2 )^2)^2=(\sim (\sim \blacksquare a)^2)^2=-^*\Diamond^*-^* a$. Analogously, we can prove $-^* \square^*-^* a=-^*-^*\Diamond^* a$.

The other direction follows immediately from Lemma \ref{normal1}.
\end{proof}

Finally, we obtain the following theorem, extending Theorem \ref{CharacNormalNelsonAlgebras} to the modal language.

\begin{theorem}
A modal Nelson lattice $\mathbf{N}=\langle\A,\blacksquare\rangle$ satisfies the equation $\Delta x = \nabla x$ if and only if there exists a modal Heyting algebra $\mathbf{M}=\langle \He, \square,\Diamond\rangle$ satisfying equations \eqref{N1} and \eqref{N2}, such that $\mathbf{N}$ is isomorphic to $\mathbf{N}(\mathbf{M},D(\He))$.
\end{theorem}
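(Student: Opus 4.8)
\section*{Proof proposal}

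The plan is to prove the equivalence by combining the modal representation theorem (Theorem \ref{isomorfismo}) with the two preceding lemmas (Lemmas \ref{normal1} and \ref{normal2}) and the non-modal characterization of normal Nelson lattices (Theorem \ref{CharacNormalNelsonAlgebras}). The guiding observation is that $\Delta x = \nabla x$ is an equation in the pure Nelson-lattice signature—its terms involve only $\sim$ and $*$, not $\blacksquare$ or $\blacklozenge$—so its validity is decided entirely by the non-modal reduct, and the modal operators merely have to be reconciled afterwards.

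For the forward implication, suppose $\mathbf{N}$ satisfies $\Delta x = \nabla x$. First I would invoke Theorem \ref{isomorfismo} to write $\mathbf{N} \cong \mathbf{N}(\mathbf{M}_\mathbf{N}^*, F^*)$, which in particular yields the reduct isomorphism $\A \cong \R{\He^*}{F^*}$, and Lemma \ref{normal2} to conclude that the modal Heyting algebra $\mathbf{M}_\mathbf{N}^* = \langle \He^*, \square^*, \Diamond^* \rangle$ satisfies \eqref{N1} and \eqref{N2}. The only remaining—and principal—difficulty is to show that the Boolean filter is the dense filter, i.e.\ $F^* = D(\He^*)$, so that the conclusion takes the required form $\mathbf{N} \cong \mathbf{N}(\mathbf{M}, D(\He^*))$. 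To settle this I would use Theorem \ref{CharacNormalNelsonAlgebras} to produce some Heyting algebra $\He$ with $\A \cong \R{\He}{D(\He)}$, and then appeal to the uniqueness clause of Theorem \ref{TwistRepresentation}(3): since $\A \cong \R{\He^*}{F^*}$ as well, the two representing pairs must be isomorphic, so there is a Heyting isomorphism $\He \cong \He^*$ carrying the distinguished filter $D(\He)$ onto $F^*$. Because any Heyting isomorphism preserves $-$ and $\top$, it sends dense elements to dense elements, whence $D(\He)$ maps onto $D(\He^*)$ and therefore $F^* = D(\He^*)$. Taking $\mathbf{M} := \mathbf{M}_\mathbf{N}^*$ completes this direction.

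The converse is the routine half. Assuming $\mathbf{N} \cong \mathbf{N}(\mathbf{M}, D(\He))$ for some $\mathbf{M}$ satisfying \eqref{N1} and \eqref{N2}, Lemma \ref{normal1} certifies that $\mathbf{N}(\mathbf{M}, D(\He))$ is indeed a modal Nelson lattice, and its non-modal reduct is by construction $\R{\He}{D(\He)}$. Theorem \ref{CharacNormalNelsonAlgebras} then tells us this reduct satisfies $\nabla x = \Delta x$; since that equation lives in the non-modal signature, it is preserved by the isomorphism, so $\mathbf{N}$ satisfies it too.

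I expect the filter identification $F^* = D(\He^*)$ to be the sole delicate point: it is where normality of the reduct must be converted into information about the representing Heyting filter, and the cleanest route is through uniqueness of the twist representation rather than a direct computation with $F^* = \{(a \vee \sim a)^2 : a \in A\}$. Everything else is a direct assembly of the cited results, hinging on the fact that $\Delta x = \nabla x$ transfers across isomorphisms precisely because it does not mention the modal operators.
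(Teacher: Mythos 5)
Your proposal is correct and takes essentially the same route as the paper, whose proof is exactly an assembly of Lemmas \ref{normal1} and \ref{normal2} with Theorems \ref{isomorfismo} and \ref{CharacNormalNelsonAlgebras}. The one step the paper leaves implicit---identifying $F^* = D(\He^*)$ via the uniqueness clause of Theorem \ref{TwistRepresentation}(3) together with the fact that Heyting isomorphisms preserve dense elements---is handled soundly in your argument.
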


\begin{proof}
It follows immediately from Lemmas \ref{normal1} and \ref{normal2} and Theorems \ref{isomorfismo} and \ref{CharacNormalNelsonAlgebras}.
\end{proof}

\subsection{Modal \texorpdfstring{$\phi$}{phi}-regular Nelson lattices}

Now, we are going to extend the notion of $\phi$-regular algebra to the modal context. A \emph{$\phi$-regular Nelson lattice} is a Nelson lattice $\A$ such that $\Phi(\A)$ is a subalgebra of $\A$, i.e.
$\phi(x \vee y)=\phi(x)\vee \phi(y)$ and $\phi(x\wedge y)=\phi(x)\wedge \phi(y)$ for all $x,y \in A$. The variety of $\phi$-regular Nelson lattices was studied in \cite{Busaniche}, which is characterized by:
\begin{equation}
    (\sim x^2)^2 \vee (\sim (\sim x^2)^2)^2 = \top
\end{equation}

We are going to say that $\mathbf{N}=\langle\A,\blacksquare\rangle$ is a \emph{modal $\phi$-regular Nelson lattice} if the non-modal reduct of $\A$ is a $\phi$-regular Nelson lattice and for any $x \in A$:
$$\blacksquare \phi(x) = \phi (\blacksquare x),$$ 
i.e., $\Phi(\A)$ with the restricted modal operator is modal Nelson subalgebra of $\mathbf{N}$.

\begin{definition} \label{crisp-wit}
A modal Heyting algebra $\mathbf{M}=\langle \He,\square,\Diamond\rangle$ is said to be \textit{crisp-witnessed} if it satisfies the equations
\begin{equation} \label{modaldoblenegation}
 --\Box x = \Box --x \hspace{2mm} \mbox{ and } \hspace{2mm} --\Diamond x = \Diamond --x   \\
\end{equation}
for every $x \in H$.
\end{definition}

The equations $--\Box x \rightharpoonup \Box --x = \top$ and $\Diamond --x \rightharpoonup  --\Diamond x = \top$  have been studied in \cite{CaiRod2010} as a way of axiomatizing the box-fragment and diamond-fragment of minimal modal G\"odel logic, respectively. It is well known that the reciprocal of both equations (so-called double negation shift) are not theorems of intuitionistic predicate logic but they characterize some interesting fragments. In particular, it is shown in \cite{CaiRod2010} that $\Box --x \rightharpoonup --\Box x = \top$ is strongly complete for 0-witnessed models and it is conjectured that $--\Diamond x \rightharpoonup  \Diamond --x= \top$ characterizes crisp frames.

\begin{theorem} \label{CharRegNelsonlattice}
A modal Nelson lattice $\mathbf{N}=\langle \A,\blacksquare\rangle $ is a modal $\phi$-regular Nelson lattice if and only if the associated modal Heyting algebra $\mathbf{M}_\mathbf{N}=\langle \He_\A,\square,\Diamond\rangle$ is crisp-witnessed and the Heyting algebra $\He_\A$ satisfies the Stone identity $-x \vee --x = \top$.
\end{theorem}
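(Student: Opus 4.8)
The plan is to transport the entire statement into the twist-structure representation and then read off both conditions coordinate-wise. By Theorem \ref{isomorfismo} I may identify $\mathbf{N}$ with $\mathbf{N}(\mathbf{M}_\mathbf{N},F^*)=\langle \R{\He_\A}{F^*},\blacksquare,\blacklozenge\rangle$, a subalgebra of $\mathbf{N}(\mathbf{M}_\mathbf{N})$, where $\mathbf{M}_\mathbf{N}=\langle \He_\A,\square,\Diamond\rangle$ and $F^*=\{(a\vee\sim a)^2:a\in A\}$. In this representation the two relevant term functions act transparently: by \eqref{fipares} one has $\phi(x,y)=(--x,--y)$, and by the definition of the modal operators in Theorem \ref{ModalHeyToModalNelson}, $\blacksquare(x,y)=(\square x,\Diamond y)$. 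Since being a modal $\phi$-regular Nelson lattice means \emph{non-modal reduct $\phi$-regular} together with $\blacksquare\phi=\phi\blacksquare$, and the right-hand side asks for \emph{$\He_\A$ Stonean} together with \emph{$\mathbf{M}_\mathbf{N}$ crisp-witnessed}, I would prove the two matching equivalences separately and then conjoin them.

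For the non-modal half I would invoke the characterization of $\phi$-regular Nelson lattices by the identity $(\sim x^2)^2\vee(\sim(\sim x^2)^2)^2=\top$ together with the transfer principle of Remark \ref{transprop}. Under the assignment $[a]_\equiv\mapsto a^2$, and using $-^* a=(\sim a)^2$ for $a\in H^*$, this identity is precisely the $2$-potency of the Stone identity $-x\vee--x=\top$ on $\He_\A$. Because $z^2\leq z$ always holds (so $z^2=\top$ forces $z=\top$) and $x^2$ ranges over all of $\He_\A$ as $x$ ranges over $A$, the non-modal reduct is $\phi$-regular if and only if $\He_\A$ is Stonean; the converse direction uses the transfer note following Theorem \ref{MappingNelsonHeyting}, namely that the $2$-potency of a valid Heyting identity is valid in the Nelson lattice.

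For the modal half I would compute both composites in coordinates, obtaining $\blacksquare\phi(x,y)=(\square(--x),\Diamond(--y))$ and $\phi\blacksquare(x,y)=(--(\square x),--(\Diamond y))$. One first checks that $\phi$ maps $\R{\He_\A}{F^*}$ into itself, since $--x\wedge--y=--(x\wedge y)=\bot$ and $--x\vee--y\geq x\vee y\in F^*$, so both composites are genuine operations on $\mathbf{N}$. Hence $\blacksquare\phi=\phi\blacksquare$ on $\mathbf{N}$ exactly when $\square(--x)=--(\square x)$ for every first coordinate $x$ and $\Diamond(--y)=--(\Diamond y)$ for every second coordinate $y$. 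The clean point is that $\R{\He_\A}{F^*}$ has full projections: $(x,-x)$ and $(-y,y)$ always belong to $\R{\He_\A}{F^*}$ because $x\vee-x$ and $-y\vee y$ are dense and $F^*$, being Boolean, contains the dense elements. Therefore $x$ and $y$ range over all of $\He_\A$, and the two coordinate conditions become exactly the equations $--\square x=\square--x$ and $--\Diamond x=\Diamond--x$ of Definition \ref{crisp-wit}. The converse is immediate: if $\mathbf{M}_\mathbf{N}$ is crisp-witnessed, the two displayed coordinate expressions agree on all of $\mathbf{N}(\mathbf{M}_\mathbf{N})$, in particular on $\mathbf{N}$.

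The main obstacle is purely one of bookkeeping: ensuring that the \emph{subalgebra} $\mathbf{N}$, rather than the full twist product $\mathbf{N}(\mathbf{M}_\mathbf{N})$, still witnesses every element of $\He_\A$ in each coordinate, so that an identity holding merely on $\mathbf{N}$ nonetheless forces the crisp-witnessed equations for all of $\He_\A$. The full-projection argument via the dense elements $x\vee-x,\ -y\vee y\in F^*$ is exactly what closes this gap; everything else reduces to the coordinate formulas for $\phi$ and $\blacksquare$ already supplied by the earlier representation results, so once this point is settled the two equivalences combine directly to give the theorem.
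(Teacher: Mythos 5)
Your proposal is correct and takes essentially the same route as the paper's own proof: identify $\mathbf{N}$ with its twist representation, compute $\phi(\blacksquare(x,y))=(--\square x,--\Diamond y)$ and $\blacksquare\phi(x,y)=(\square --x,\Diamond --y)$ coordinate-wise, and recover the crisp-witnessed equations in the converse direction from the pairs $(x,-x)$ and $(-x,x)$, which is exactly the paper's argument. The only cosmetic difference is that the paper dispatches the non-modal half by citing Busaniche's Theorem 5.12, where you reconstruct that equivalence via the identity for $\phi$-regularity and the transfer principle of Remark \ref{transprop}.
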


\begin{proof}
By \cite[Theorem 5.12]{Busaniche}, we only need to prove the equalities for the modal operators. Let $F$ be a boolean filter of $\He_\A$ such that $\mathbf{N}$ is isomorphic to $\mathbf{N}(\mathbf{M}_\mathbf{N},F)$. 

We can see that, by (\ref{fipares}) for any pair  $(x,y)\in R(\He_\A,F)$, $\phi(\blacksquare (x,y)) = \phi(\Box x,\Diamond y) = (- - \Box x, - -\Diamond y)$  and 
$\blacksquare \phi(x,y) = (\Box- - x, \Diamond - - y)$. Therefore, if the modal Heyting algebra $\langle \He_\A,\square,\Diamond\rangle$ satisfies equations in (\ref{modaldoblenegation}) the result follows.\\
To prove the converse, let $x\in H_\A$. Since $(x,-x)\in R(\He_\A,F)$, we have $\phi(\blacksquare (x,-x)) = \blacksquare \phi(x,-x)$. Then by equation (\ref{fipares}), we obtain $(- - \Box x, - -\Diamond -x)= (\Box- - x, \Diamond - x)$ which implies that the equation $--\square x=\square --x$ holds in $\mathbf{M}_\mathbf{N}$. Analogously, since $(-x,x)\in R(\He_\A,F)$, $\phi(\blacksquare (-x,x)) = \blacksquare \phi(-x,x)$. It follows that $(- - \Box -x, - -\Diamond x)= (\Box-  x, \Diamond -- x)$ and therefore the equation $-- \Diamond x=\Diamond --x$ holds in $\mathbf{M}_\mathbf{N}$.
\end{proof}

\section{Topological dualities}\label{sec6}

Recall that a \emph{Priestley space} is a compact ordered space $\mathcal{X} = \langle X, \tau, \leq \rangle$ that satisfies the separation condition, i.e., for every $x,y \in X$ such that $x \not\leq y$, there exists a clopen up-set $U \subseteq X$ with $x \in U$ and $y \not\in U$. 
We will denote the up-set and down-set generated by a set $U\subseteq X$ by $\ua{U} =\{x\in X:(\exists u\in U) \ x\leq x\}$, and $\da{U}=\{x\in X:(\exists u\in U) \ x\leq u\} $, respectively.
Moreover, we will denote by $\mathcal{U}(X)$ the family of clopen up-sets and by $\mathcal{D}(X)$ the family clopen down-sets. 
An \emph{Esakia space} is a Priestley space satisfying the additional condition that for every clopen $U \in X$, the set $\da{U}\in \mathcal{D}(X)$. The set of clopen up-sets of an Esakia space forms a distributive lattice, which becomes a Heyting algebra when endowed with the following implication operation: for clopen up-sets $U, V \subseteq X$, we define $U \to V := \{x \in X : \ua{x} \cap U \subseteq V\}$.
Let $\mathcal{X}$ and $\mathcal{Y}$ be Esakia spaces. A map $f \colon X\to Y$
is an \emph{Esakia function} if it is continuous, order-preserving and satisﬁes that $\uaY{f(x)}\subseteq f[\uaX{x}]$ for every
$x\in X$. 

Conversely, every Heyting algebra $\He$ gives rise to an Esakia space $\langle X(\He), \tau_\He, \subseteq \rangle$, where $X(\He)$ is the poset of prime filters of $\He$, ordered by inclusion, and $\tau_\He$ is is the topology generated by the sub-basis: $$\{\sigma_\He(a) : a \in H\} \cup \{\X{\He}\setminus \sigma_\He(a) : a \in H\}$$ with $\sigma_\He(a)= \{ P \in \X{\He} : a \in P \}$. The map $\sigma_\He: \He \to \mathcal{U}(\X{\He})$ defined as above is an isomorphism of Heyting algebras.

The categories of Heyting algebras with homomorphisms and Esakia spaces with Esakia functios are dually equivalent. If $h\colon \He_1\to \He_2$ is a homomorphism of Heyting algebras, then the map $X(h) \colon \X{\He_2}\to\X{\He_2}$
between the corresponding Esakia spaces deﬁned by $X(h)(P)=h^{-1}[P]$ for every $P\in \X{\He_2}$ is an Esakia function. Conversely, if $f \colon X_1\to X_2$ is an Esakia function, then the map $h(f) \colon \mathcal{U}(X_2)\to \mathcal{U}(X_1)$ deﬁned by $h(f)(U)=f^{-1}[U]$ for every clopen up-set $U$ of $X_2$ is a Heyting algebra homomorphism.

\subsection{Topological duality for modal Heyting algebras}
In this section, we will extend the topological duality to modal Heyting algebras. To achieve this, we will make use of neighbourhood functions, a common tool for interpreting non-normal modal operators. We will follow the approach outlined in \cite{Oli2020}, where a semantic characterization of the logic $\textbf{IE}_3$ is provided in terms of neighbourhood models that include two distinct neighbourhood functions, each corresponding to one of the two modalities.

Let $\mathbf{M}= \langle \He,\square , \Diamond\rangle$ be a modal Heyting algebra. 
For each operation $\bullet \in \{\square, \Diamond\}$ we define a neighbourhood function $\eta_\bullet : \X{\He}\mapsto \mathcal{P}(\mathcal{P}(\X{\He}))$, i.e. we associate to each prime filter a set of prime filters which is called its neighbourhood in the following way: 
\begin{align} \label{defneighb}
\eta_\square(P) &:=\{\sigma_\He(a) : \square a \in P \},\\ \label{defneighd}
\eta_\Diamond(P) &:=\mathcal{D}(\X{\He})\setminus\{ \X{\He}\setminus \sigma_\He(a) : \Diamond a \in P \}.
\end{align} 

Using the neighbourhood function $\eta_\bullet$, we can define an algebraic operator $\bullet \in \{\Box, \Diamond\}$ in the Heyting algebra of clopen up-sets of $\X{\He}$, denoted by $\mathcal{U}(\X{\He})$. For any $U \in \mathcal{U}(\X{\He})$, the operators are defined as follows:

\begin{align} 
\square_{\eta_\square}(U) &:= \{ P \in \X{\He} : U \in \eta_\square(P)\}, \label{topmodalmapb}\\
\Diamond_{\eta_\Diamond}(U) &:= \{ P \in \X{\He} : \X{\He}\setminus U \notin \eta_\Diamond(P)\}. \label{topmodalmapd}
\end{align}

The following proposition shows that, using the above definitions, we can extend the isomorphism $\sigma_\He$ for Heyting algebras to include modal operators as well:

\begin{proposition}
Let $\mathbf{M}= \langle \He,\square , \Diamond\rangle$ be a modal Heyting algebra. Then, for every $a \in H$: 
\begin{enumerate}
    \item $\sigma_\He({\square a}) = \square_{\eta_\square}(\sigma_\He(a))$.
    \item $\sigma_\He({\Diamond a}) = \Diamond_{\eta_\Diamond}(\sigma_\He(a))$.
\end{enumerate}
\end{proposition}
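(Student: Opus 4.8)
The plan is to prove both identities by directly unfolding the definitions of the neighbourhood functions \eqref{defneighb}--\eqref{defneighd} and of the topological operators \eqref{topmodalmapb}--\eqref{topmodalmapd}, and then to invoke the injectivity of $\sigma_\He$, which holds because $\sigma_\He$ is an isomorphism of Heyting algebras. In both cases the argument reduces to a chain of equivalences establishing, for an arbitrary prime filter $P \in \X{\He}$, that $P$ lies in the right-hand side exactly when the relevant modal element belongs to $P$.

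For part (1), I would fix $a \in H$ and an arbitrary $P \in \X{\He}$ and ask when $P \in \square_{\eta_\square}(\sigma_\He(a))$. By \eqref{topmodalmapb} this holds iff $\sigma_\He(a) \in \eta_\square(P)$, and by \eqref{defneighb} the latter means that there is some $b \in H$ with $\square b \in P$ and $\sigma_\He(a) = \sigma_\He(b)$. Since $\sigma_\He$ is injective, $\sigma_\He(a) = \sigma_\He(b)$ forces $a = b$, so the condition collapses to $\square a \in P$, that is, $P \in \sigma_\He(\square a)$. As $P$ was arbitrary, this yields $\sigma_\He(\square a) = \square_{\eta_\square}(\sigma_\He(a))$.

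For part (2) the extra care lies in handling the two complementations built into \eqref{defneighd} and \eqref{topmodalmapd}. First I would record that $\X{\He} \setminus \sigma_\He(a)$ is a clopen down-set, since $\sigma_\He(a)$ is a clopen up-set; this is what guarantees it is a genuine element of $\mathcal{D}(\X{\He})$. Then, by \eqref{topmodalmapd}, $P \in \Diamond_{\eta_\Diamond}(\sigma_\He(a))$ iff $\X{\He} \setminus \sigma_\He(a) \notin \eta_\Diamond(P)$. Because \eqref{defneighd} defines $\eta_\Diamond(P)$ as the complement, taken within $\mathcal{D}(\X{\He})$, of the family $\{\X{\He} \setminus \sigma_\He(b) : \Diamond b \in P\}$, and because $\X{\He} \setminus \sigma_\He(a)$ already belongs to $\mathcal{D}(\X{\He})$, this non-membership is equivalent to $\X{\He} \setminus \sigma_\He(a) \in \{\X{\He} \setminus \sigma_\He(b) : \Diamond b \in P\}$. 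Taking complements and using injectivity of $\sigma_\He$ once more, this reduces to $\Diamond a \in P$, i.e. $P \in \sigma_\He(\Diamond a)$, which gives the desired identity.

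The only step demanding genuine attention is the diamond computation: one must track the two complementations correctly and verify that $\X{\He} \setminus \sigma_\He(a)$ really lies in $\mathcal{D}(\X{\He})$, so that "failing to belong to the set-difference $\mathcal{D}(\X{\He}) \setminus \{\cdots\}$" is equivalent to "belonging to the subtracted family" rather than merely to "lying outside $\mathcal{D}(\X{\He})$." Everything else is a routine translation of definitions, with the injectivity of $\sigma_\He$ carrying out the essential work in both the box and the diamond case.
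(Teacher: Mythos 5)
Your proof is correct and follows essentially the same route as the paper's: both arguments fix an arbitrary prime filter $P$ and unfold \eqref{defneighb}--\eqref{defneighd} and \eqref{topmodalmapb}--\eqref{topmodalmapd} into a chain of equivalences. The paper compresses the two steps you spell out --- the injectivity of $\sigma_\He$ and the fact that $\X{\He}\setminus\sigma_\He(a)$ lies in $\mathcal{D}(\X{\He})$ --- into single ``if and only if'' claims, so your version simply makes explicit what the paper's proof leaves implicit.
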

\begin{proof}

1. Let $P\in \X{\He}$. By \eqref{defneighb}, $\square a \in P$ if and only if $\sigma_\He(a) \in \eta_\square(P)$. It follows from \eqref{topmodalmapb}, $\sigma_\He(a) \in \eta_\square(P)$ if and only if $P \in \square_{\eta_\square}(\sigma_\He(a))$. Therefore $\sigma_\He({\square a}) = \square_{\eta_\square}(\sigma_\He(a))$.

2. Let $P\in \X{\He}$.  By \eqref{defneighd}, $\Diamond a \in P$ if and only if $\X{\He}\setminus \sigma_\He(a) \notin \eta_\Diamond(P)$. Then, it follows from \eqref{topmodalmapd}, $\X{\He}\setminus \sigma_\He(a) \notin \eta_\Diamond(P)$ if and only if $P\in \Diamond_{\eta_\Diamond}(\sigma_\He(a))$. Therefore, $\sigma_\He({\Diamond a}) = \Diamond_{\eta_\Diamond}(\sigma_\He(a))$.
\end{proof}


\begin{proposition}\label{propeta}
    Let $\mathbf{M}=\langle\He,\square,\Diamond\rangle$ be a modal Heyting algebra. Then, for any $P\in\X{\He}$ and $U,V\in\mathcal{U}(\X{\He})$, the following condition holds:
    \[\text{if }U\in \eta_\square(P)\text{, then }\da{U}\cup (\X{\He}\setminus V)\in \eta_\Diamond(P).\]
\end{proposition}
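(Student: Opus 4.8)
The plan is to unwind both neighbourhood definitions and reduce the claim to the interaction axiom \eqref{mH'}. Since $U \in \eta_\square(P)$, by \eqref{defneighb} we may write $U = \sigma_\He(a)$ for some $a \in H$ with $\square a \in P$. Set $W := \da{U} \cup (\X{\He}\setminus V)$. First I would record that $W \in \mathcal{D}(\X{\He})$: the set $\da{U}$ is a clopen down-set by the Esakia condition applied to the clopen $U$, and $\X{\He}\setminus V$ is a clopen down-set as the complement of the clopen up-set $V$; a union of clopen down-sets is again a clopen down-set. Thus the only thing left to show, by \eqref{defneighd}, is that $W$ is not one of the excluded sets, i.e. that $W \neq \X{\He}\setminus \sigma_\He(c)$ for every $c$ with $\Diamond c \in P$.

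I would argue by contradiction. Suppose $W = \X{\He}\setminus\sigma_\He(c)$ with $\Diamond c \in P$. Complementing and using $\X{\He}\setminus(\da{U}\cup(\X{\He}\setminus V)) = (\X{\He}\setminus\da{U})\cap V$ gives
\[
\sigma_\He(c) = (\X{\He}\setminus\da{U})\cap V \subseteq \X{\He}\setminus\da{U} \subseteq \X{\He}\setminus U,
\]
where the last inclusion only uses the trivial fact $U \subseteq \da{U}$. Since $U = \sigma_\He(a)$, this means $\sigma_\He(a)\cap\sigma_\He(c) = \emptyset$, and because $\sigma_\He$ is a Heyting-algebra isomorphism (in particular an injective lattice homomorphism) we conclude $a \wedge c = \bot$.

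Finally I would invoke the quasi-equation \eqref{mH'} from Lemma \ref{equivAxrule1}: from $a \wedge c = \bot$ it yields $\square a \wedge \Diamond c = \bot$. But $\square a \in P$ and $\Diamond c \in P$ force $\square a \wedge \Diamond c \in P$, hence $\bot \in P$, which is impossible since $P$ is a prime (in particular proper) filter. This contradiction establishes the claim. The step deserving the most care is the bookkeeping with the neighbourhood function for $\Diamond$, which is defined by exclusion rather than inclusion: one must read $W \in \eta_\Diamond(P)$ as ``$W$ is a clopen down-set that does not have the form $\X{\He}\setminus\sigma_\He(c)$ for any $\Diamond c \in P$'', and the role of $V$ is harmless since it only shrinks $\sigma_\He(c)$ while preserving the crucial inclusion into $\X{\He}\setminus U$.
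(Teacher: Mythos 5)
Your proof is correct and follows essentially the same route as the paper's: express $U$ as $\sigma_\He(a)$ with $\square a \in P$, suppose the set $\da{U}\cup(\X{\He}\setminus V)$ is excluded from $\eta_\Diamond(P)$, derive $a\wedge c=\bot$, and let \eqref{mH'} contradict $\square a,\Diamond c\in P$. If anything, your write-up is slightly more careful than the paper's, since you explicitly verify that $\da{U}\cup(\X{\He}\setminus V)$ is a clopen down-set (a fact the paper's appeal to \eqref{defneighd} tacitly requires) and you obtain $a\wedge c=\bot$ directly from the inclusion $\sigma_\He(c)\subseteq\X{\He}\setminus U$ rather than via the identity $\X{\He}\setminus\da{U}=\sigma_\He(-a)$ used in the paper.
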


\begin{proof}
    Let $U\in \eta_\square(P)$ and let $V\in \mathcal{U}(\X{\He)}$. By definition, there exists $a\in H$ such that $\sigma_\He(a)=U $ and $\square a\in P$. Suppose $\da{U}\cup (\X{\He}\setminus V)\notin \eta_\Diamond(P)$. By \eqref{defneighd}, there exists $b\in H$ such that $\da{U}\cup (\X{\He}\setminus V)=\X{\He}\setminus\sigma_\He(b)$ and $\Diamond b\in P$. Then, $(\X{\He}\setminus \da{U})\cap V=\sigma_\He(b)$. It is easy to see that $\X{\He}\setminus \da{U}=\X{\He}\setminus \da{\sigma_\He(a)=\sigma_\He(-a)}$ and thus $\sigma_\He(b)\subseteq\sigma_\He(-a)$. From \eqref{mH'}, since $a\wedge b=0$ it follows that $\square a\wedge\Diamond b=\bot\notin P$, which contradicts the fact that $\square a,\Diamond b\in P$.
\end{proof}


The condition in Proposition \ref{propeta} can be viewed as a topological counterpart of the Strong Interaction Property for neighbourhood semantics introduced in \cite{Oli2020}. 
Analyzing these properties, we introduce the topological spaces that we will later prove to be dual to modal Heyting algebras.

\begin{definition}\label{ME-space}
    A \emph{modal Esakia space} (ME-space) is a structure $\mathcal{X}=\langle X,\tau,\leq,\eta_1,\eta_2\rangle$ such that $\langle X,\leq,\tau\rangle$ is an Esakia space and $\eta_1,\eta_2\colon X\to \mathcal{P}(\mathcal{P}(X))$ are neighbourhood functions satisfying the following properties for all $x\in X$ and for all $U,V\in \mathcal{U}(X)$:
    \begin{enumerate}
        \item $\eta_1(x)\in \mathcal{P}(\mathcal{U}(X))$ and $\eta_2(x)\in \mathcal{P}(\mathcal{D}(X))$,
        \item $\square_{\eta_1}(U)= \{ x \in X : U \in \eta_1(x)\}$, and $\Diamond_{\eta_2}(U)=\{ x \in X : X\setminus U \notin \eta_2(x)\}$ are clopen up-sets, 
        \item  if $U\in \eta_1(x)$, then $\da{U}\cup (X\setminus V)\in \eta_2(x)$.
    \end{enumerate}
\end{definition}

As a consequence of the previous theorems, we obtain the following result:

\begin{theorem}
    For each modal Heyting algebra $\mathbf{M}=\langle \He,\square,\Diamond\rangle$, the structure $\mathcal{X}(\mathbf{M)}=\langle \X{\He},\tau_\He,\subseteq,\eta_\square,\eta_\Diamond\rangle$ is a modal Esakia space.
\end{theorem}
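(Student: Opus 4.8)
The plan is to verify, one at a time, the three defining conditions of a modal Esakia space from Definition \ref{ME-space} for the structure $\mathcal{X}(\mathbf{M})=\langle \X{\He},\tau_\He,\subseteq,\eta_\square,\eta_\Diamond\rangle$. Almost all of the substantive work has already been carried out in the preceding propositions, so the argument is essentially a matter of assembling them. To set up, I would first recall that the reduct $\langle \X{\He},\tau_\He,\subseteq\rangle$ is an Esakia space by the duality reviewed at the beginning of this section, and that $\sigma_\He\colon \He\to\mathcal{U}(\X{\He})$ is an isomorphism of Heyting algebras; in particular $\sigma_\He$ is a bijection onto the family of clopen up-sets, a fact I use repeatedly below.

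For condition (1) I would simply read it off the definitions. Since each $\sigma_\He(a)$ is a clopen up-set, we have $\eta_\square(P)=\{\sigma_\He(a):\square a\in P\}\subseteq \mathcal{U}(\X{\He})$, so $\eta_\square(P)\in\mathcal{P}(\mathcal{U}(X))$; and $\eta_\Diamond(P)$ is, by its very definition as a subset of $\mathcal{D}(\X{\He})$, an element of $\mathcal{P}(\mathcal{D}(X))$.

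For condition (2) the key tool is the Proposition establishing $\sigma_\He(\square a)=\square_{\eta_\square}(\sigma_\He(a))$ and $\sigma_\He(\Diamond a)=\Diamond_{\eta_\Diamond}(\sigma_\He(a))$. Given an arbitrary $U\in\mathcal{U}(X)$, surjectivity of $\sigma_\He$ supplies some $a\in H$ with $U=\sigma_\He(a)$; then $\square_{\eta_\square}(U)=\sigma_\He(\square a)$ and $\Diamond_{\eta_\Diamond}(U)=\sigma_\He(\Diamond a)$, both of which are clopen up-sets precisely because they lie in the image of $\sigma_\He$. This reduction from arbitrary clopen up-sets to sets of the form $\sigma_\He(a)$ is the one place where the isomorphism is genuinely needed, and it constitutes the only mild subtlety of the proof.

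Finally, condition (3) is exactly the statement of Proposition \ref{propeta}: whenever $U\in\eta_\square(P)$ one has $\da{U}\cup(\X{\He}\setminus V)\in\eta_\Diamond(P)$. I expect no real obstacle here, since the substantive content — the compatibility encoded by \eqref{mH'}, transported into the neighbourhood language — was already extracted in that proposition; the present theorem merely records that $\mathcal{X}(\mathbf{M})$ satisfies the full axiom list of Definition \ref{ME-space}. Consequently the main point of the proof is organizational rather than technical: the three axioms correspond respectively to the definitional form of the neighbourhood functions, the earlier isomorphism-preservation proposition, and Proposition \ref{propeta}.
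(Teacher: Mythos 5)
Your proposal is correct and follows exactly the route the paper intends: the paper states this theorem without proof, as ``a consequence of the previous theorems,'' namely the proposition that $\sigma_\He$ intertwines $\square,\Diamond$ with $\square_{\eta_\square},\Diamond_{\eta_\Diamond}$ (which, via surjectivity of $\sigma_\He$, gives condition (2)) and Proposition \ref{propeta} (which is condition (3)), with condition (1) immediate from the definitions of $\eta_\square$ and $\eta_\Diamond$. Your write-up simply makes this assembly explicit, including the correct observation that bijectivity of $\sigma_\He$ is the one point where something beyond bookkeeping is used.
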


Now, we will prove that given a modal Esakia space, the algebra of clopen up-sets, endowed with the operators defined from the neighbourhood functions, is a modal Heyting algebra.

\begin{proposition}
    Let $\mathcal{X}=\langle X,\tau,\leq,\eta_1,\eta_2\rangle$ be a modal Esakia space. Then, the algebra $\langle \mathcal{U}(X),\square_{\eta_1},\Diamond_{\eta_2}\rangle$, where $\square_{\eta_1},\Diamond_{\eta_2}\colon\mathcal{U}(X)\to \mathcal{U}(X)$ are the operators defined by
    \begin{align}\label{squareeta}
        \square_{\eta_1}(U)&= \{ x \in X : U \in \eta_1(x)\},\\ \Diamond_{\eta_2}(U)&=\{ x \in X : X\setminus U \notin \eta_2(x)\},\label{diamondeta}
    \end{align}
    is a modal Heyting algebra.
\end{proposition}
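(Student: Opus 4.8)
The plan is to check the three requirements of Definition \ref{ModalHeytingAlgebra} in turn. First, the reduct $\langle \mathcal{U}(X), \cup, \cap, \to, \emptyset, X\rangle$ is a Heyting algebra by the standard Esakia duality recalled at the beginning of this section: the clopen up-sets of an Esakia space are closed under $\cup$ and $\cap$, and the operation $U \to V = \{x \in X : \ua{x}\cap U \subseteq V\}$ supplies the relative pseudocomplement, with $\emptyset$ as $\bot$ and $X$ as $\top$. Second, the fact that $\square_{\eta_1}$ and $\Diamond_{\eta_2}$ are genuine unary operators on $\mathcal{U}(X)$ — that is, that they send clopen up-sets to clopen up-sets — is precisely Condition (2) in Definition \ref{ME-space}, so no further work is required at this stage.

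It remains to verify the modal axiom \eqref{mH}. By Lemma \ref{equivAxrule1} this is equivalent to the quasi-equation \eqref{mH'}, which in the present setting reads: whenever $U, V \in \mathcal{U}(X)$ satisfy $U \cap V = \emptyset$, then $\square_{\eta_1}(U) \cap \Diamond_{\eta_2}(V) = \emptyset$. I would argue by contradiction, assuming there is some $x \in \square_{\eta_1}(U)\cap \Diamond_{\eta_2}(V)$; by the defining equations \eqref{squareeta} and \eqref{diamondeta} this means $U \in \eta_1(x)$ while $X \setminus V \notin \eta_2(x)$, and the aim is to contradict the latter condition.

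The key observation is an order-theoretic collapse. Since $U \cap V = \emptyset$ and $V$ is an up-set, $U \subseteq X \setminus V$; and as $X \setminus V$ is a down-set, passing to downward closures yields $\da{U} \subseteq X \setminus V$, whence $\da{U}\cup(X\setminus V) = X \setminus V$. Applying Condition (3) of Definition \ref{ME-space} to this particular pair, the hypothesis $U \in \eta_1(x)$ gives $\da{U}\cup(X\setminus V) \in \eta_2(x)$, that is $X \setminus V \in \eta_2(x)$, contradicting $X \setminus V \notin \eta_2(x)$. This establishes \eqref{mH'}, hence \eqref{mH}, and completes the verification that $\langle \mathcal{U}(X),\square_{\eta_1},\Diamond_{\eta_2}\rangle$ is a modal Heyting algebra.

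The only delicate point I anticipate is the contravariant, complement-based definition of $\Diamond_{\eta_2}$ together with the up-set/down-set asymmetry built into Condition (3): one must keep careful track of which sets are up-sets (those relevant to $\eta_1$) and which are down-sets (those relevant to $\eta_2$), and instantiate the free parameter in Condition (3) by $V$ itself so that the union collapses exactly onto $X\setminus V$. Everything else is routine bookkeeping inherited from the Heyting-algebra structure of $\mathcal{U}(X)$.
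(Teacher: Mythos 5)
Your proof is correct and follows essentially the same route as the paper's: both reduce the claim to verifying \eqref{mH'}, argue by contradiction from a point $x \in \square_{\eta_1}(U)\cap\Diamond_{\eta_2}(V)$, and apply Condition (3) of Definition \ref{ME-space} with the parameter instantiated as $V$, using $U\subseteq X\setminus V$ to collapse $\da{U}\cup(X\setminus V)$ to $X\setminus V$. Your write-up is in fact slightly more explicit than the paper's, since you spell out why $\da{U}\subseteq X\setminus V$ (namely that $X\setminus V$ is a down-set) and record the routine preliminary checks that the paper silently omits.
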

\begin{proof}
    We only need to prove that \eqref{mH'} is satisfied. Let $U,V\in \mathcal{U}(X)$ such that $U\cap V=\emptyset$. Suppose that there exists $x\in X$ such that $x\in \square_{\eta_1}(U)\cap\Diamond_{\eta_2}(V)$. Then, $U\in \eta_1(x)$ and $X\setminus V\notin \eta_2(x)$. By Definition \ref{ME-space}, since $U\subseteq X\setminus V$ and $U\in \eta_1(x)$, we obtain $X\setminus V=\da{U}\cup (X \setminus V)\in \eta_2(x)$, which is a contradiction. Therefore $\square_{\eta_1}(U)\cap\Diamond_{\eta_2}(V)=\emptyset$.
\end{proof}

\begin{theorem}\label{natis}
    For each modal Heyting algebra $\mathbf{M}=\langle \He,\square,\Diamond\rangle$, the map $\sigma_\He \colon \He \to \mathcal{U}(\X{\He})$ defined by
    \[\sigma_\He(a)=\{P\in \X{\He}:a\in P\}\]
    is a modal
Heyting algebra isomorphism from $\langle \He,\square,\Diamond\rangle$ onto $\langle\mathcal{U}(\X{\He}),\square_{\eta_\square},\Diamond_{\eta_\Diamond}\rangle$.
\end{theorem}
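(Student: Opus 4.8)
The plan is to obtain the statement by combining the classical Esakia duality with the two operator-preservation identities already proved in this section, so that essentially no new computation is required.

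First I would recall that, by the Esakia representation reviewed at the beginning of Section~\ref{sec6}, the map $\sigma_\He \colon \He \to \mathcal{U}(\X{\He})$ is already an isomorphism of Heyting algebras; in particular it is a bijection that preserves $\wedge$, $\vee$, $\rightharpoonup$, $\top$ and $\bot$. Hence the reduct part of the claim is settled, and the only genuinely new content is the compatibility of $\sigma_\He$ with the modal operators. For that compatibility I would invoke the proposition established earlier in this section, which states exactly that $\sigma_\He(\square a) = \square_{\eta_\square}(\sigma_\He(a))$ and $\sigma_\He(\Diamond a) = \Diamond_{\eta_\Diamond}(\sigma_\He(a))$ for every $a \in H$. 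Because $\sigma_\He$ is surjective, every clopen up-set is of the form $\sigma_\He(a)$, so these two identities say precisely that $\sigma_\He$ intertwines $\square$ with $\square_{\eta_\square}$ and $\Diamond$ with $\Diamond_{\eta_\Diamond}$ on all of $\mathcal{U}(\X{\He})$. I would also note, so that the statement is well posed, that the codomain $\langle \mathcal{U}(\X{\He}), \square_{\eta_\square}, \Diamond_{\eta_\Diamond}\rangle$ is indeed an object of $\mathcal{MH}$: this is obtained by combining the theorem that $\mathcal{X}(\mathbf{M})$ is a modal Esakia space with the proposition showing that the clopen up-sets of any modal Esakia space, equipped with the neighbourhood-defined operators, form a modal Heyting algebra.

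Finally I would conclude that $\sigma_\He$ is a bijective homomorphism of modal Heyting algebras and therefore an isomorphism, its inverse automatically preserving all operations including $\square$ and $\Diamond$. There is no real obstacle here: the analytic heart of the argument — verifying that the neighbourhood operators $\square_{\eta_\square}$ and $\Diamond_{\eta_\Diamond}$ reproduce $\square$ and $\Diamond$ through $\sigma_\He$ — was already carried out in the earlier proposition, and what remains is only the bookkeeping of assembling surjectivity, injectivity and operator-preservation into the single word \emph{isomorphism}.
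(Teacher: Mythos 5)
Your proposal is correct and follows exactly the route the paper intends: the paper states Theorem~\ref{natis} without a written proof precisely because it is the immediate combination of the classical fact that $\sigma_\He$ is a Heyting algebra isomorphism with the preceding proposition establishing $\sigma_\He(\square a) = \square_{\eta_\square}(\sigma_\He(a))$ and $\sigma_\He(\Diamond a) = \Diamond_{\eta_\Diamond}(\sigma_\He(a))$, together with the results guaranteeing that $\langle\mathcal{U}(\X{\He}),\square_{\eta_\square},\Diamond_{\eta_\Diamond}\rangle$ is a modal Heyting algebra. Your assembly of these ingredients, including the remark that surjectivity makes the intertwining identities cover all clopen up-sets, is exactly the bookkeeping the paper leaves implicit.
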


\begin{theorem}\label{natep}
    Let $\mathcal{X}=\langle X,\tau,\leq,\eta_1,\eta_2\rangle$ be an ME-space and let $\epsilon_\mathcal{X}\colon X\to\X{\mathcal{U}(X)}$ be the map defined by:
    \[\epsilon_X(x)=\{U\in\mathcal{U}(X):x\in U\}.\]
    Then $\epsilon_X$ is an Esakia-homeomorphism between $\langle X,\tau,\leq\rangle$ and $\langle \X{\mathcal{U}(X)},\tau_{\mathcal{U}(X)},\subseteq\rangle$ that satisfies:
    \begin{enumerate}
        \item $\eta_{\square_{\eta_1}}(\epsilon_X(x))=\{\epsilon_X[U]:U\in\eta_1(x)\}$,
        \item $\eta_{\Diamond_{\eta_2}}(\epsilon_X(x))=\{\epsilon_X[D]:D\in\eta_2(x)\}$.
    \end{enumerate}
\end{theorem}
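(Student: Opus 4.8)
The plan is to split the statement into its order-topological part and its modal part. The claim that $\epsilon_X$ is an Esakia-homeomorphism between $\langle X,\tau,\leq\rangle$ and $\langle \X{\mathcal{U}(X)},\tau_{\mathcal{U}(X)},\subseteq\rangle$ is precisely the representation half of the classical Esakia duality recalled at the start of Section~\ref{sec6}, so I would invoke it directly rather than reprove it. The genuinely new content is conditions (1) and (2), and both will follow by unfolding the neighbourhood functions $\eta_{\square_{\eta_1}}$ and $\eta_{\Diamond_{\eta_2}}$ associated via \eqref{defneighb} and \eqref{defneighd} to the modal Heyting algebra $\langle \mathcal{U}(X),\square_{\eta_1},\Diamond_{\eta_2}\rangle$. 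The one auxiliary fact I would isolate first is that, for every clopen up-set $U\in\mathcal{U}(X)$,
\[
\sigma_{\mathcal{U}(X)}(U)=\epsilon_X[U].
\]
This is immediate: $\epsilon_X(x)\in\sigma_{\mathcal{U}(X)}(U)$ iff $U\in\epsilon_X(x)$ iff $x\in U$, and since $\epsilon_X$ is a bijection onto $\X{\mathcal{U}(X)}$, every prime filter has the form $\epsilon_X(x)$. Because $\epsilon_X$ is bijective it preserves complements, so $\X{\mathcal{U}(X)}\setminus\sigma_{\mathcal{U}(X)}(U)=\epsilon_X[X\setminus U]$; as $U$ ranges over $\mathcal{U}(X)$ the complement $X\setminus U$ ranges over all of $\mathcal{D}(X)$, whence $\mathcal{D}(\X{\mathcal{U}(X)})=\{\epsilon_X[D]:D\in\mathcal{D}(X)\}$.

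For (1), I would compute $\eta_{\square_{\eta_1}}(\epsilon_X(x))=\{\sigma_{\mathcal{U}(X)}(U):\square_{\eta_1}(U)\in\epsilon_X(x)\}$ by \eqref{defneighb}. The membership $\square_{\eta_1}(U)\in\epsilon_X(x)$ means $x\in\square_{\eta_1}(U)$, which by \eqref{squareeta} is exactly $U\in\eta_1(x)$. Substituting the auxiliary fact $\sigma_{\mathcal{U}(X)}(U)=\epsilon_X[U]$ then yields $\{\epsilon_X[U]:U\in\eta_1(x)\}$, as required.

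For (2), I would unfold $\eta_{\Diamond_{\eta_2}}(\epsilon_X(x))$ via \eqref{defneighd} as $\mathcal{D}(\X{\mathcal{U}(X)})\setminus\{\X{\mathcal{U}(X)}\setminus\sigma_{\mathcal{U}(X)}(U):\Diamond_{\eta_2}(U)\in\epsilon_X(x)\}$. Here $\Diamond_{\eta_2}(U)\in\epsilon_X(x)$ means $x\in\Diamond_{\eta_2}(U)$, i.e.\ $X\setminus U\notin\eta_2(x)$ by \eqref{diamondeta}. Writing $D=X\setminus U$ and applying the auxiliary fact, the subtracted family becomes $\{\epsilon_X[D]:D\in\mathcal{D}(X),\ D\notin\eta_2(x)\}$, while the ambient set is $\{\epsilon_X[D]:D\in\mathcal{D}(X)\}$. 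Since $\epsilon_X$ is injective, $\epsilon_X[D_1]=\epsilon_X[D_2]$ forces $D_1=D_2$, so the difference is taken coordinatewise and collapses to $\{\epsilon_X[D]:D\in\mathcal{D}(X),\ D\in\eta_2(x)\}$; finally the first condition of Definition~\ref{ME-space} guarantees $\eta_2(x)\subseteq\mathcal{D}(X)$, so this equals $\{\epsilon_X[D]:D\in\eta_2(x)\}$.

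The main obstacle lies entirely in (2): the definition of $\eta_\Diamond$ is given as a set-theoretic difference over \emph{complements} of clopen up-sets rather than by a direct membership condition, so one must argue carefully that complements of clopen up-sets of $X$ correspond under $\epsilon_X$ exactly to the clopen down-sets of $\X{\mathcal{U}(X)}$, and that the injectivity of $\epsilon_X$ lets the set difference pass cleanly through the image operator. Once the auxiliary fact and the containment $\eta_2(x)\subseteq\mathcal{D}(X)$ are in place, the box case (1) is routine and (2) follows with only bookkeeping.
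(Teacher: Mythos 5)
Your proof is correct and follows essentially the same route as the paper's: both invoke classical Esakia duality for the homeomorphism claim and then unfold the neighbourhood definitions \eqref{defneighb} and \eqref{defneighd}, using the identity $\sigma_{\mathcal{U}(X)}(U)=\epsilon_X[U]$ together with the injectivity of $\epsilon_X$. If anything, your set-difference computation in the $\Diamond$ case spells out the reverse inclusion (and its dependence on $\eta_2(x)\subseteq\mathcal{D}(X)$ from Definition \ref{ME-space}) that the paper leaves as ``immediate from the definitions.''
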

\begin{proof} It is known that $\epsilon_X$ is a homeomorphism and an order preserving isomorphism.\\
    1. Let $Z\in \eta_{\square_{\eta_1}}(\epsilon_X(x))$. By \eqref{defneighb}, $$Z=\sigma_{\mathcal{U}(X)}(U)=\{\epsilon_X(x): U\in \epsilon_X(x)
    \}=\{\epsilon_X(x): x\in U
    \}=\epsilon_X[U]$$ for some $U\in\mathcal{U}(X)$ such that $\square_{\eta_1}(U)\in \epsilon_X(x)$, i.e., $x\in \square_{\eta_1}(U)$. By \eqref{squareeta}, $U\in\eta_1(x)$. Therefore, $Z\in \{\epsilon_X[U]:U\in\eta_1(x)\}$. The other direction follows immediately from the definitions.\\
    2. Let $Z\in \eta_{\Diamond_{\eta_2}}(\epsilon_X(x))$. By \eqref{defneighd}, $$Z=\X{\mathcal{U}(X)}\setminus\sigma_{\mathcal{U}(X)}(U)=\{\epsilon_X(x): U\notin \epsilon_X(x)
    \}=\{\epsilon_X(x): x\notin U
    \}=\epsilon_X[X\setminus U]$$ for some $U\in\mathcal{U}(X)$ such that $\Diamond_{\eta_2}(U)\notin \epsilon_X(x)$, i.e., $x\notin \Diamond_{\eta_2}(U)$. By \eqref{diamondeta}, $X\setminus U\in\eta_2(x)$. Therefore, $Z\in \{\epsilon_X[D]:D\in\eta_2(x)\}$. The other direction follows immediately from the definitions.
\end{proof}

Now, we will introduce a category whose objects are modal Esakia spaces. Note that for a clopen up-closed set $V$ of $\X{\mathcal{U}(X)}$, we have the following equivalences:
\[
V \in \eta_{\square_{\eta_1}}(\epsilon_X(x)) \quad \text{if and only if} \quad \epsilon_X^{-1}[V] \in \eta_1(x),
\]
and
\[
(\X{\mathcal{U}(X)}\setminus V)\in \eta_{\Diamond_{\eta_2}}(\epsilon_X(x)) \quad \text{if and only if} \quad (X \setminus \epsilon_X^{-1}[V]) \in \eta_2(x).
\]

\begin{definition}\label{MEmorphism}
A map $f \colon X_1\to X_2$	 between two ME-spaces $\mathcal{X}_1=\langle X_1,\tau_1,\leq_1,\eta_1,\eta_2\rangle$ and $\mathcal{X}_2=\langle X_2,\tau_2,\leq_2,\eta'_1,\eta'_2\rangle$ is an ME-morphism iff $f$ is an Esakia function that additionally satisﬁes, for every $x\in X_1$ and every clopen up-set $U\in\mathcal{U}(X_2)$,
\begin{enumerate}
    \item $U\in \eta'_1	(f(x))$ if and only if $f^{-1}[U]\in\eta_1(x)$,
\item $(X_2\setminus U)\in \eta'_2(f(x))$ if and only if $(X_1\setminus f^{-1}[U])\in \eta_2(x)$.
\end{enumerate}
\end{definition}

It is straightforward to verify that the composition of ME-morphisms is again an ME-morphism and that the identity map of an ME-space is an ME-morphism. 

We now define the category $\mathbb{ME}$, whose objects are ME-spaces, and the morphisms are ME-morphisms.

\begin{proposition}
   Let $\mathbf{M}_1=\langle \He_1,\square_1,\Diamond_1\rangle$ and $\mathbf{M}_2=\langle \He_2,\square_2,\Diamond_2\rangle$ be two modal Heyting algebras and let $h\colon \He_1\to \He_2 $ be a homomorphism of modal Heyting algebras. Then $X(h)\colon \X{\He_2}\to\X{\He_1}$ is a ME-morphism.
\end{proposition}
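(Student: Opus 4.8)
The plan is to show that the Esakia function $X(h)$ respects the two neighbourhood functions in the precise sense required by Definition~\ref{MEmorphism}. Since the duality for the underlying Heyting algebras is already established, we know $X(h)\colon \X{\He_2}\to\X{\He_1}$, given by $X(h)(P)=h^{-1}[P]$, is an Esakia function. Thus the only thing left to verify is the two biconditionals of Definition~\ref{MEmorphism}, one for $\eta_\square$ and one for $\eta_\Diamond$. Throughout I would fix a prime filter $P\in\X{\He_2}$ and a clopen up-set $U\in\mathcal{U}(\X{\He_1})$; by the isomorphism $\sigma_{\He_1}$ from Theorem~\ref{natis}, I may write $U=\sigma_{\He_1}(a)$ for a unique $a\in H_1$.

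\textbf{Box case.} First I would unwind both sides of condition~(1) using Definition~(\ref{defneighb}) of $\eta_\square$. The left-hand side $U\in\eta'_1(X(h)(P))$ means $\sigma_{\He_1}(a)\in\eta_{\square_1}(h^{-1}[P])$, which by (\ref{defneighb}) is equivalent to $\square_1 a\in h^{-1}[P]$, i.e. $h(\square_1 a)\in P$. Since $h$ is a homomorphism of \emph{modal} Heyting algebras, $h(\square_1 a)=\square_2 h(a)$, so this says $\square_2 h(a)\in P$. For the right-hand side, I must identify $(X(h))^{-1}[U]$ as a basic clopen up-set of $\X{\He_2}$: the standard computation gives $(X(h))^{-1}[\sigma_{\He_1}(a)]=\sigma_{\He_2}(h(a))$, because $P\in (X(h))^{-1}[\sigma_{\He_1}(a)]$ iff $a\in h^{-1}[P]$ iff $h(a)\in P$. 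Hence $(X(h))^{-1}[U]\in\eta_1(P)$ means $\sigma_{\He_2}(h(a))\in\eta_{\square_2}(P)$, which by (\ref{defneighb}) is exactly $\square_2 h(a)\in P$. The two sides coincide, establishing condition~(1).

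\textbf{Diamond case.} The argument is structurally the same but filtered through the complementation built into Definition~(\ref{defneighd}) of $\eta_\Diamond$. The left-hand side of condition~(2), $(\X{\He_1}\setminus U)\in\eta'_2(X(h)(P))$, unwinds via (\ref{defneighd}) to $\Diamond_1 a\in h^{-1}[P]$, i.e. $h(\Diamond_1 a)=\Diamond_2 h(a)\in P$, again using that $h$ preserves $\Diamond$. For the right-hand side, note $\X{\He_2}\setminus (X(h))^{-1}[U]=(X(h))^{-1}[\X{\He_1}\setminus U]=\X{\He_2}\setminus\sigma_{\He_2}(h(a))$, so $(\X{\He_2}\setminus (X(h))^{-1}[U])\in\eta_2(P)$ translates by (\ref{defneighd}) into $\Diamond_2 h(a)\in P$. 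Matching the two sides completes condition~(2), and hence $X(h)$ is an ME-morphism.

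\textbf{The main obstacle} I anticipate is purely bookkeeping rather than conceptual: one must correctly track the complementations in the $\Diamond$-case of Definition~(\ref{defneighd}) and be careful that $\eta_\Diamond$ is indexed by \emph{clopen down-sets} of the form $\X{\He}\setminus\sigma_\He(a)$, so that the identity $(X(h))^{-1}[\X{\He_1}\setminus\sigma_{\He_1}(a)]=\X{\He_2}\setminus\sigma_{\He_2}(h(a))$ is applied in the correct direction. The essential engine of the proof is the single algebraic fact that a modal Heyting homomorphism commutes with both $\square$ and $\Diamond$, combined with the naturality identity $(X(h))^{-1}[\sigma_{\He_1}(a)]=\sigma_{\He_2}(h(a))$ inherited from the non-modal Esakia duality.
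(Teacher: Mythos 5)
Your strategy coincides with the paper's own proof: appeal to the non-modal duality for the fact that $X(h)$ is an Esakia function, reduce to conditions (1) and (2) of Definition~\ref{MEmorphism}, represent an arbitrary clopen up-set of $\X{\He_1}$ as $\sigma_{\He_1}(a)$, and close both biconditionals using $h(\square_1 a)=\square_2 h(a)$, $h(\Diamond_1 a)=\Diamond_2 h(a)$, together with the naturality identity $(X(h))^{-1}[\sigma_{\He_1}(a)]=\sigma_{\He_2}(h(a))$. Your Box case is correct and is essentially verbatim the paper's.

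Your Diamond case, however, misapplies \eqref{defneighd}. That definition reads $\eta_\Diamond(P)=\mathcal{D}(\X{\He})\setminus\{\X{\He}\setminus\sigma_\He(a):\Diamond a\in P\}$; it is a set-theoretic \emph{difference}, so (by injectivity of $\sigma_\He$) the membership $\X{\He}\setminus\sigma_\He(a)\in\eta_\Diamond(P)$ is equivalent to $\Diamond a\notin P$, not to $\Diamond a\in P$. Hence both of your intermediate claims --- that $(\X{\He_1}\setminus U)\in\eta_{\Diamond_1}(X(h)(P))$ unwinds to $\Diamond_2 h(a)\in P$, and that $(\X{\He_2}\setminus(X(h))^{-1}[U])\in\eta_{\Diamond_2}(P)$ unwinds to $\Diamond_2 h(a)\in P$ --- are false as stated; each should read $\Diamond_2 h(a)\notin P$. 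Because you commit the same polarity error on both sides, the biconditional you ultimately assert is still true, and flipping $\in$ to $\notin$ in those two places turns your argument into exactly the paper's proof (which goes through $\Diamond_1 a\notin h^{-1}[P]$, i.e.\ $\Diamond_2 h(a)\notin P$). So the gap is local and easily repaired, but it is worth noting that the complementation you yourself flagged as the main hazard is precisely what got dropped: $\eta_\Diamond(P)$ collects the clopen down-sets that are \emph{not} witnessed by a $\Diamond$-formula in $P$.
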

\begin{proof}
    It is known that $X(h)$ is an Esakia function. We only need to prove 1 and 2 from Definition \ref{MEmorphism}. Let $a\in H_1$ and $P\in \X{\He_2}$.
    
    1. $\sigma_{\He_1}(a)\in \eta_{\square_1}(X(h)(P))$ if and only if $\square_1a\in h^{-1}(P)$. It follows that $h(\square_1a)=\square_2h(a)\in P$ and by \eqref{defneighb}, it is equivalent to $\sigma_{\He_2}(h(a))\in \eta_{\square_2}(P)$. In addition, $\sigma_{\He_2}(h(a))=(X(h))^{-1}[\sigma_{\He_1}(a)]$.\\
    2. $\X{\He_1}\setminus\sigma_{\He_1}(a)\in \eta_{\Diamond_1}(X(h)(P))$ if and only if $\Diamond_2a\notin h^{-1}(P)$. It follows that $h(\Diamond_1a)=\Diamond_2h(a)\notin P$ and by \eqref{defneighd}, it is equivalent to $\X{\He_2}\setminus\sigma_{\He_2}(h(a))\in \eta_{\Diamond_2}(P)$. In addition, $\X{\He_2}\setminus\sigma_{\He_2}(h(a))=\X{\He_2}\setminus(X(h))^{-1}[\sigma_{\He_1}(a)]$.
\end{proof}

\begin{proposition}
    Let $f \colon X_1\to X_2$ be an ME-morphism	between two ME-spaces $\mathcal{X}_1=\langle X_1,\tau_1,\leq_1,\eta_1,\eta_2\rangle$ and $\mathcal{X}_2=\langle X_2,\tau_2,\leq_2,\eta'_1,\eta'_2\rangle$. Then, the map $h(f) \colon \mathcal{U}(X_2)\to \mathcal{U}(X_1)$ deﬁned by $h(f)(U)=f^{-1}[U]$ is a modal Heyting algebra homomorphism. 
\end{proposition}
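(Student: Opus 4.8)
The plan is to split the statement into two parts: the preservation of the Heyting structure, which is already classical, and the preservation of the two modal operators, which is exactly what the two clauses of Definition \ref{MEmorphism} are engineered to deliver. First I would invoke the Esakia duality recalled at the start of Section \ref{sec6}: since $f$ is by hypothesis an Esakia function, the map $h(f)(U)=f^{-1}[U]$ is already a homomorphism of the underlying Heyting algebras $\mathcal{U}(X_2)\to\mathcal{U}(X_1)$. Thus it only remains to check that $h(f)$ commutes with the box operators $\square_{\eta'_1},\square_{\eta_1}$ and with the diamond operators $\Diamond_{\eta'_2},\Diamond_{\eta_2}$.

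For the box, fix $U\in\mathcal{U}(X_2)$. Unfolding \eqref{squareeta} on both sides, I would compute
\[
h(f)(\square_{\eta'_1}(U))=f^{-1}[\{y\in X_2:U\in\eta'_1(y)\}]=\{x\in X_1:U\in\eta'_1(f(x))\},
\]
while
\[
\square_{\eta_1}(h(f)(U))=\{x\in X_1:f^{-1}[U]\in\eta_1(x)\}.
\]
These two sets coincide by clause (1) of Definition \ref{MEmorphism}, which asserts precisely that $U\in\eta'_1(f(x))$ iff $f^{-1}[U]\in\eta_1(x)$. Hence $h(f)(\square_{\eta'_1}(U))=\square_{\eta_1}(h(f)(U))$.

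For the diamond the computation runs in the same spirit, but one must track complements. Using \eqref{diamondeta} together with the identity $X_1\setminus f^{-1}[U]=f^{-1}[X_2\setminus U]$, I would rewrite
\[
h(f)(\Diamond_{\eta'_2}(U))=\{x\in X_1:(X_2\setminus U)\notin\eta'_2(f(x))\}
\]
and
\[
\Diamond_{\eta_2}(h(f)(U))=\{x\in X_1:(X_1\setminus f^{-1}[U])\notin\eta_2(x)\}.
\]
Clause (2) of Definition \ref{MEmorphism} gives $(X_2\setminus U)\in\eta'_2(f(x))$ iff $(X_1\setminus f^{-1}[U])\in\eta_2(x)$; taking the contrapositive of this biconditional equates the two displayed sets, so $h(f)(\Diamond_{\eta'_2}(U))=\Diamond_{\eta_2}(h(f)(U))$.

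I do not expect a genuine obstacle: the notion of ME-morphism was tailored so that clauses (1) and (2) match the box and diamond computations term for term. The only point requiring a little care is the diamond case, where the operator is defined through the non-membership condition $X\setminus U\notin\eta_2(x)$, so the equivalence from clause (2) must first be negated before it applies; mixing up the membership and non-membership versions is the one place a slip could occur.
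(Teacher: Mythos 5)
Your proposal is correct and takes essentially the same route as the paper's proof: the Heyting-algebra part is delegated to classical Esakia duality, and the box and diamond cases are settled by unfolding \eqref{squareeta} and \eqref{diamondeta} and applying clauses (1) and (2) of Definition \ref{MEmorphism} pointwise. In fact your diamond computation is slightly more careful than the paper's, which (evidently a typo) unfolds $\Diamond_{\eta_2}$ and $\eta'_2$ with $\in$ rather than the $\notin$ required by \eqref{diamondeta}; your explicit contrapositive of clause (2) is the clean way to state that step.
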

\begin{proof}
It is known that $h(f)$ is a Heyting algebra homomorphism. Let $U\in \mathcal{U}(X_2)$. Then,
    \begin{align*}
    \square_{\eta_1}(h(f)(U))&=\square_{\eta_1}(f^{-1}[U])\\&=\{x\in X_1:f^{-1}[U]\in\eta_1(x)\}\\&=\{x\in X_1:U\in\eta'_1(f(x))\}\\&=\{x\in X_1:f(x)\in\square_{\eta'_1}(U)\}\\&=f^{-1}[\square_{\eta'_1}(U)]\\&=h(f)(\square_{\eta'_1}(U)).
    \end{align*}
On the other hand,
    \begin{align*}
        \Diamond_{\eta_2}(h(f)(U))&=\Diamond_{\eta_2}(f^{-1}[U])\\&=\{x\in X_1:X_1\setminus f^{-1}[U]\in\eta_2(x)\}\\&=\{x\in X_1:X_2\setminus U\in\eta'_2(f(x))\}\\&=\{x\in X_1:f(x)\in\Diamond_{\eta'_2}(U)\}\\&=f^{-1}[\Diamond_{\eta'_2}(U)]\\&=h(f)(\Diamond_{\eta'_2}(U)).
    \end{align*}
    Since $U$ is arbitrary, it follows $h(f)$ is a modal Heyting algebra homomorphism.
\end{proof}

Let $\MH$ be the category of modal Heyting algebras whose morphsims are modal Heyting algebras homomorphisms.
Let $\mathbf{G}\colon \mathbb{ME}\to \MH$ be the contravariant functor that for each ME-space $\mathcal{X}=\langle X,\tau,\leq,\eta_1,\eta_2\rangle$, $\mathbf{G}(\mathcal{X}):=\langle \mathcal{U}(X),\square_{\eta_1},\Diamond_{\eta_2}\rangle$ and that for each ME-morphism $f\colon X_1\to X_2$, $\mathbf{G}(f)\colon \mathcal{U}(X_2)\to \mathcal{U}(X_1)$ is the modal Heyting algebra homomorphism $\mathbf{G}(f):=h(f)$.

On the other hand, Let $\mathbf{J}\colon \MH\to \mathbb{ME}$ be the contravariant functor that for each modal Heyting algebra $\mathbf{M}=\langle \He,\square,\Diamond\rangle$, $\mathbf{J}(\mathbf{M}):=\langle \X{\He},\tau_\He,\subseteq,\eta_\square,\eta_\Diamond\rangle$ and that for each modal Heyting algebra homomorphism $h\colon \He_1\to \He_2$, $\mathbf{J}(h)\colon \X{\He_2}\to \X{\He_1}$ is the ME-morphism $\mathbf{J}(h):=X(h)$. In this context, we obtain that diagrams in Figure \ref{fig3} commute.
    \begin{figure}[h]
    \centering
    \begin{tikzcd}
        H_1 \arrow[r,"\sigma_{\He_1}"] \arrow[d,"h"] & \mathcal{U}(\X{\He_1}) \arrow[d,"\mathbf{G}\circ\mathbf{J}(h)"] \\ 
        H_2 \arrow[r,"\sigma_{\He_2}"] & \mathcal{U}(\X{\He_2})
    \end{tikzcd}
    \hspace{2cm}
    \begin{tikzcd}
        {X_1} \arrow[r,"\epsilon_{X_1}"] \arrow[d,"f"] & \X{\mathcal{U}(X_1)} \arrow[d,"\mathbf{J}\circ\mathbf{G}(f)"] \\ 
        {X_2} \arrow[r,"\epsilon_{X_2}"] & \X{\mathcal{U}(X_2)}
    \end{tikzcd}
   \caption{Commutative diagrams for $\mathbf{G}$ and $\mathbf{J}$ where $\mathbf{M}_1=\langle\He_1,\square_1,\Diamond_1\rangle$, $\mathbf{H}_2=\langle\He_2,\square_2,\Diamond_2\rangle$, and $\mathcal{X}_1=\langle X_1,\tau_1,\leq_1,\eta_1,\eta_2\rangle$ and $\mathcal{X}_2=\langle X_2,\tau_2,\leq_2,\eta'_1,\eta'_2\rangle$.}
    \label{fig3}
\end{figure} 

Therefore, we have the following Theorem.

\begin{theorem}
    The categories $\mathbb{ME}$ and $\MH$ are dually equivalent.
\end{theorem}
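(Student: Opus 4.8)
The plan is to recognize that this theorem is the capstone assembling the two functors $\mathbf{G}\colon\mathbb{ME}\to\MH$ and $\mathbf{J}\colon\MH\to\mathbb{ME}$ together with the two families of maps $\sigma$ and $\epsilon$ into a dual equivalence. Since both functors are contravariant and have already been shown to be well defined on objects and on morphisms (by the two preceding propositions, which verify that $\mathbf{G}(f)=h(f)$ is a modal Heyting homomorphism and that $\mathbf{J}(h)=X(h)$ is an ME-morphism), the only task that remains is to promote the component-wise isomorphisms $\sigma_\He$ and $\epsilon_X$ to natural isomorphisms $\sigma\colon\Id_{\MH}\To\mathbf{G}\circ\mathbf{J}$ and $\epsilon\colon\Id_{\mathbb{ME}}\To\mathbf{J}\circ\mathbf{G}$, and then to invoke the standard criterion that a pair of contravariant functors admitting such natural isomorphisms constitutes a dual equivalence.

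First I would treat the algebra side. By Theorem \ref{natis}, for each modal Heyting algebra $\mathbf{M}=\langle\He,\square,\Diamond\rangle$ the map $\sigma_\He$ is an isomorphism of modal Heyting algebras from $\mathbf{M}$ onto $\mathbf{G}(\mathbf{J}(\mathbf{M}))=\langle\mathcal{U}(\X{\He}),\square_{\eta_\square},\Diamond_{\eta_\Diamond}\rangle$, so each component of $\sigma$ is an isomorphism in $\MH$. Naturality is exactly the commutativity of the left-hand square in Figure \ref{fig3}: for a homomorphism $h\colon\He_1\to\He_2$ and every $a\in H_1$ one must check that $(\mathbf{G}\circ\mathbf{J})(h)(\sigma_{\He_1}(a))=\sigma_{\He_2}(h(a))$, which unfolds to the familiar Esakia identity $(X(h))^{-1}[\sigma_{\He_1}(a)]=\sigma_{\He_2}(h(a))$ already used in the propositions above.

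Second I would treat the space side. By Theorem \ref{natep}, for each ME-space $\mathcal{X}=\langle X,\tau,\leq,\eta_1,\eta_2\rangle$ the map $\epsilon_X$ is an Esakia-homeomorphism onto $\mathbf{J}(\mathbf{G}(\mathcal{X}))$ satisfying the two neighbourhood identities stated there. Combined with the two displayed equivalences immediately following that theorem, these identities are precisely conditions (1) and (2) of Definition \ref{MEmorphism}, so $\epsilon_X$ is an ME-morphism; being moreover a homeomorphism and an order isomorphism, its inverse inherits the same biconditionals and is therefore also an ME-morphism, whence $\epsilon_X$ is an isomorphism in $\mathbb{ME}$. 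Naturality is the commutativity of the right-hand square in Figure \ref{fig3}. With $\sigma$ and $\epsilon$ both established as natural isomorphisms, the dual equivalence follows at once.

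The step I expect to be the main obstacle is confirming that $\epsilon_X$ respects the neighbourhood structure in \emph{both} directions, i.e.\ that the biconditionals of Definition \ref{MEmorphism} genuinely hold for $\epsilon_X$ and are inherited by $\epsilon_X^{-1}$. This is the only place where the modal (neighbourhood-function) data, as opposed to the already-known Esakia duality, must be carefully controlled; the two equivalences recorded just after Theorem \ref{natep} are doing the essential bookkeeping, translating the neighbourhood identities of Theorem \ref{natep} into the morphism conditions, and I would lean on them to close the argument rather than re-deriving the neighbourhood computations from scratch.
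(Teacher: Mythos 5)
Your proposal is correct and takes essentially the same approach as the paper: the paper's proof likewise invokes Theorems \ref{natis} and \ref{natep} to conclude that $\sigma_\He$ and $\epsilon_X$ are natural isomorphisms between $\mathbf{G}\circ\mathbf{J}$ and $\Id_{\MH}$, and between $\mathbf{J}\circ\mathbf{G}$ and $\Id_{\mathbb{ME}}$, with naturality given by the commutative diagrams of Figure \ref{fig3}. Your additional care in verifying that $\epsilon_X$ and its inverse satisfy the ME-morphism biconditionals of Definition \ref{MEmorphism} is a detail the paper leaves implicit, but it is the same argument.
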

\begin{proof}
    It follows from Theorems \ref{natis} and \ref{natep} that for all $\mathbf{M}=\langle\He,\square,\Diamond\rangle\in\MH$ and $\mathcal{X}=\langle X,\tau,\leq,\eta_1,\eta_2\rangle\in \mathbb{ME}$, $\sigma_\He\colon \He\to \mathcal{U}(\X{\He})$ and $\epsilon_X\colon X\to \X{\mathcal{U}(X)}$ are natural isomorphisms between $\mathbf{G}\circ\mathbf{J}$ and $\Id_{\MH}$ and $\mathbf{J}\circ\mathbf{G}$ and $\Id_{\mathbb{ME}}$. 
\end{proof}

\subsection{Topological duality for modal Nelson lattices}

We now introduce a category of topological structures that we will prove to be equivalent to the category $\mathbb{TW}$. Recall that filters of Heyting algebras correspond to closed up-sets in their dual Esakia space. To characterize boolean filters in terms of their dual space, we note that for all  $P \in \X{\He}$, the inclusion $D(\He) \subseteq P$ holds if and only if $P$ is maximal in the poset $\langle \X{\He}, \subseteq \rangle$.

So, let $\He$ be a Heyting algebra and let $F\subseteq H$ be a filter such that $D(\He)\subseteq F$. Then $\mathcal{C}(F)=\{P\in \X{\He}:F\subseteq P\}$ is a closed set such that $\mathcal{C}(F)\subseteq \max(\X{\He})$. Thus, following a similar approach to \cite{JanRivi2014} and \cite{Jansana2}, we define:

\begin{definition}
A modal NE-space ($MNE$-space) is a structure $\mathcal{X} = \langle X, \tau,\leq, \eta_\Box, \eta_\Diamond, C\rangle$ such that $\langle X, \tau, \leq,\eta_1,\eta_2 \rangle$ is an ME-space (according to Definition \ref{ME-space}) and $C\subseteq\max(X)$ is a closed subset satisfying the following property for all clopen up-sets $U, V \in \mathcal{U}(X)$:
\begin{equation}\label{F*}
    \text{If }C \subseteq U \cup V\text{ and }U \cap V = \emptyset\text{, then }C \subseteq \Box_{\eta_\Box} U \cup \Diamond_{\eta_\Diamond}V. \tag{$F^*$}
\end{equation}
\end{definition}

\begin{proposition}
    Let $\langle \He,\square,\Diamond\rangle$ be a modal Heyting algebra and let $F\subseteq H$ be a boolean filter of $\He$ that satisfies Condition $\eqref{F}$. Then, the structure $\langle \X{\He},\tau_\He,\subseteq,\eta_\square,\eta_\Diamond,\mathcal{C}(F)\rangle$ is a MNE-space.
\end{proposition}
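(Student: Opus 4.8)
The plan is to check the three defining requirements of an MNE-space separately, and to observe that only the last one carries any content. The reduct $\langle \X{\He},\tau_\He,\subseteq,\eta_\square,\eta_\Diamond\rangle$ is an ME-space by the theorem already established for $\mathcal{X}(\mathbf{M})$, so the ME-space condition of the definition is free. That $\mathcal{C}(F)$ is a closed subset of $\max(\X{\He})$ was recorded in the discussion preceding the definition: since $F$ is Boolean we have $D(\He)\subseteq F$, and this is exactly what forces $\mathcal{C}(F)\subseteq\max(\X{\He})$, while closedness of $\mathcal{C}(F)$ comes from the general correspondence between filters and closed up-sets. Hence everything reduces to verifying the separation-type condition \eqref{F*}.

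To prove \eqref{F*}, I would translate it back through the isomorphism $\sigma_\He\colon\He\to\mathcal{U}(\X{\He})$. Given clopen up-sets $U,V$ with $\mathcal{C}(F)\subseteq U\cup V$ and $U\cap V=\emptyset$, choose $a,b\in H$ with $U=\sigma_\He(a)$ and $V=\sigma_\He(b)$. Since $\sigma_\He$ is a lattice isomorphism, the disjointness $U\cap V=\emptyset$ becomes $a\wedge b=\bot$. For the covering hypothesis I would use the standard Esakia-duality fact that for every $c\in H$ one has $c\in F$ if and only if $\mathcal{C}(F)\subseteq\sigma_\He(c)$ --- this holds because $F$ equals the intersection of the prime filters extending it. Applying this equivalence to $c=a\vee b$ turns $\mathcal{C}(F)\subseteq U\cup V=\sigma_\He(a\vee b)$ into $a\vee b\in F$.

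At this point both hypotheses of condition \eqref{F} are available, so \eqref{F} yields $\square a\vee\Diamond b\in F$. It then only remains to recognise the conclusion of \eqref{F*} as the topological image of this fact: by the proposition giving $\sigma_\He(\square a)=\square_{\eta_\square}(\sigma_\He(a))$ and $\sigma_\He(\Diamond b)=\Diamond_{\eta_\Diamond}(\sigma_\He(b))$, we obtain $\square_{\eta_\square}(U)\cup\Diamond_{\eta_\Diamond}(V)=\sigma_\He(\square a\vee\Diamond b)$, and applying the equivalence $c\in F\iff\mathcal{C}(F)\subseteq\sigma_\He(c)$ once more, now to $c=\square a\vee\Diamond b$, delivers $\mathcal{C}(F)\subseteq\square_{\eta_\square}(U)\cup\Diamond_{\eta_\Diamond}(V)$, which is precisely \eqref{F*}.

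I do not anticipate a genuine obstacle: the whole argument is a dictionary translation of the algebraic condition \eqref{F} into the dual topological language, with the earlier representation proposition supplying the modal half of the dictionary. The only point deserving explicit care is the equivalence $c\in F\iff\mathcal{C}(F)\subseteq\sigma_\He(c)$, whose proof rests on representing $F$ as the intersection of the prime filters containing it; stating this fact cleanly is what makes both directions of the translation rigorous.
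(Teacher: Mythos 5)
Your proposal is correct and follows essentially the same route as the paper's proof: reduce to condition \eqref{F*}, pull the clopen up-sets back through $\sigma_\He$ to elements $a,b\in H$, apply \eqref{F}, and translate the conclusion back using $\sigma_\He(\square a)=\square_{\eta_\square}(\sigma_\He(a))$ and $\sigma_\He(\Diamond b)=\Diamond_{\eta_\Diamond}(\sigma_\He(b))$. The only difference is that you make explicit the equivalence $c\in F\iff\mathcal{C}(F)\subseteq\sigma_\He(c)$ (via $F$ being the intersection of the prime filters containing it), which the paper uses silently; this is a welcome point of extra rigor but not a different argument.
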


\begin{proof}
    Since $\langle \He,\square,\Diamond\rangle$ is a modal Heyting algebra, we obtain that $\langle \X{\He},\tau_{\He},\subseteq,\eta_{\square},\eta_{\Diamond}\rangle$ is a modal Esakia space. Moreover, $F$ is a boolean filter, thus the closed set satisfies $\mathcal{C}(F)\subseteq \max(\X{\He})$. Suppose that $\mathcal{C}(F) \subseteq U \cup V$ and $U \cap V = \emptyset$ for $U,V\in\mathcal{U}(\X{\He})$. Then, there exist $a,b\in H$ such that $U=\sigma_{\He}(a)$ and $V=\sigma_{\He}(b)$. It follows that $\mathcal{C}(F)\subseteq \sigma_{\He}(a\vee b)$ and $\sigma_{\He}(a\wedge b)=\emptyset$. Thus, $a\vee b\in F$ and $a\wedge b=\bot$. By Condition \eqref{F}, $\square a\vee\Diamond b\in F$. Therefore, we obtain $\mathcal{C}(F)\subseteq \sigma_{\He}(\square a\vee \Diamond b)=\sigma_{\He}(\square a)\cup \sigma_{\He}( \Diamond b)=\square_{\eta_{\square}}(\sigma_{\He}(a))\cup \Diamond_{\eta_{\Diamond}}(\sigma_{\He}(a))=\square_{\eta_{\square}}(U)\cup \Diamond_{\eta_{\Diamond}}(V)$.
\end{proof}

\begin{corollary}\label{asMEN-space}
    Let $\A=\langle A,\blacksquare\rangle$ be a modal Nelson lattice. Then, the structure $\langle \X{\He_\mathbf{A}^*},\tau_{\He_{\mathbf{A}}^*},\subseteq,\eta_{\square^*},\eta_{\Diamond^*},\mathcal{C}(F^*)\rangle$ where $\mathcal{C}(F^*)=\{P\in \X{\He_\mathbf{A}^*}:F^*\subseteq P\}$ is a modal NE-space. 
\end{corollary}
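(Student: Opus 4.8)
The plan is to obtain the corollary as a direct instantiation of the Proposition immediately preceding it, which establishes that for any modal Heyting algebra together with a Boolean filter satisfying Condition \eqref{F}, the associated topological structure is an MNE-space. Concretely, I would apply that Proposition to the modal Heyting algebra $\mathbf{M}_\mathbf{N}^* = \langle \He_\mathbf{A}^*, \square^*, \Diamond^* \rangle$ and the filter $F^*$, so that the structure $\langle \X{\He_\mathbf{A}^*}, \tau_{\He_{\mathbf{A}}^*}, \subseteq, \eta_{\square^*}, \eta_{\Diamond^*}, \mathcal{C}(F^*) \rangle$ appearing in the statement is exactly the one produced by the Proposition. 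Thus the task reduces to checking that the hypotheses of that Proposition are met for this particular choice.

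To justify the instantiation I first recall, from Lemma \ref{lemma Heytingfilter}, that $\mathbf{M}_\mathbf{N}^*$ is a modal Heyting algebra, so the hypothesis concerning the Heyting reduct and the operators $\square^*, \Diamond^*$ is satisfied. Next I would verify that $F^*$ is a Boolean filter of $\He_\mathbf{A}^*$ satisfying \eqref{F}. The Boolean property is immediate, since $F^*$ contains the dense elements, as already noted in Theorem \ref{isomorfismo}. For Condition \eqref{F} I would appeal to the same theorem: the MN-lattice $\mathbf{N}(\mathbf{M}_\mathbf{N}^*, F^*)$ is constructed there by means of Lemma \ref{ModalHeyting-MN3}, whose hypothesis is precisely that $F^*$ satisfies \eqref{F}; hence this condition holds for $F^*$.

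With both hypotheses in place, the preceding Proposition applies verbatim and yields the claim. The only point that genuinely requires attention is the validity of Condition \eqref{F} for $F^*$, everything else being a matter of matching notation. If one prefers a self-contained verification rather than citing Theorem \ref{isomorfismo}, I would unfold \eqref{F} inside $\He_\mathbf{A}^*$: given $a, b \in H^*$ with $a \wedge^* b = \bot$ and $a \vee^* b \in F^*$, I would translate these into statements about representatives in $\A$ modulo the congruence $\equiv$ and invoke Condition \eqref{mN3'} of the modal Nelson lattice to witness $\square^* a \vee^* \Diamond^* b \in F^*$. This is the main technical obstacle, although it is already subsumed in the development leading to Theorem \ref{isomorfismo}, so in the proof itself a one-line appeal to the Proposition and to Theorem \ref{isomorfismo} suffices.
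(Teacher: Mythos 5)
Your overall strategy coincides with the paper's: the corollary is meant to be read off by instantiating the immediately preceding Proposition with the pair $(\mathbf{M}_\mathbf{N}^*,F^*)$, where Lemma \ref{lemma Heytingfilter} supplies the modal Heyting algebra and the development around Theorem \ref{isomorfismo} supplies the Boolean filter. The decomposition and the matching of notation are right; the only substantive point, as you yourself note, is Condition \eqref{F} for $F^*$, and it is precisely there that both of your justifications are defective.

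First, your primary argument is circular as stated: from the fact that the proof of Theorem \ref{isomorfismo} \emph{applies} Lemma \ref{ModalHeyting-MN3}, you infer that the hypothesis \eqref{F} of that lemma must hold for $F^*$. One cannot deduce a hypothesis from the fact that the paper invoked a lemma requiring it. The non-circular way to extract \eqref{F} from Theorem \ref{isomorfismo} is to use its \emph{conclusion}: $h(x)=(x^2,(\sim x)^2)$ is a surjective homomorphism of modal Nelson lattices onto $\mathbf{N}(\mathbf{M}_\mathbf{N}^*,F^*)$, so the set $\RN{\He^*}{F^*}$ is closed under $\blacksquare$; for $(a,b)\in \RN{\He^*}{F^*}$ the membership $\blacksquare(a,b)=(\Box^* a,\Diamond^* b)\in \RN{\He^*}{F^*}$ unpacks exactly to $\Box^* a\wedge^*\Diamond^* b=\bot$ together with $\Box^* a\vee^*\Diamond^* b\in F^*$, i.e.\ to \eqref{mH'} and \eqref{F}. (Equivalently, the definition of the functor $\mathbf{F}$ in the categorical equivalence already asserts $(\mathbf{M}_\mathbf{N}^*,F_\mathbf{N}^*)\in\mathbb{TW}$.) Second, your fallback self-contained verification would not go through: \eqref{mN3'} only yields the meet condition $\Box^* a\wedge^*\Diamond^* b=\bot$ and gives no information about the join lying in $F^*$; indeed, \eqref{F} is not a consequence of the algebra axioms for an arbitrary Boolean filter, which is exactly why Lemma \ref{ModalHeyting-MN3} must assume it. The correct direct argument runs through Lemma \ref{SendlewskiBoolanResult}(2): from $a\wedge^* b=\bot$ and $a\vee^* b\in F^*$ (with $a,b\in H^*$) one obtains $c\in A$ with $c^2=a$ and $(\sim c)^2=b$; then \eqref{mN2} and \eqref{mN1} give $\Box^* a=(\blacksquare c)^2$ and $\Diamond^* b=(\blacklozenge \sim c)^2=(\sim\blacksquare c)^2$, whence $\Box^* a\vee^*\Diamond^* b=(\blacksquare c\vee\sim\blacksquare c)^2$, which lies in $F^*$ by the very definition $F^*=\{(d\vee\sim d)^2:d\in A\}$.
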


\begin{proposition}
    Let $\mathcal{X} = \langle X, \tau, \leq,\eta_1, \eta_2, C\rangle$ be an MNE-space. Then, $F_\mathcal{C}=\{U\in\mathcal{U}(X):\mathcal{C}\subseteq U\}$ is a boolean filter of $\mathcal{U}(X)$ that satisfies Condition \eqref{F}. Thus, we obtain that the pair satisfies $(\langle\mathcal{U}(X),\square_{\eta_1},\Diamond_{\eta_2}\rangle,F_\mathcal{C})\in \mathbb{TW}$.
\end{proposition}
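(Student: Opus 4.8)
The plan is to reduce membership in $\mathbb{TW}$ to three checks on $F_\mathcal{C}$: that it is a filter, that it is Boolean, and that it satisfies \ref{F}; the modal-Heyting structure on the carrier is already in hand. Indeed, since the underlying $\langle X,\tau,\leq,\eta_1,\eta_2\rangle$ is an ME-space, the preceding proposition guarantees that $\langle \mathcal{U}(X),\square_{\eta_1},\Diamond_{\eta_2}\rangle$ is a modal Heyting algebra, where the lattice operations of $\mathcal{U}(X)$ are $\cap$ (meet) and $\cup$ (join), with top $X$ and bottom $\emptyset$. That $F_\mathcal{C}$ is a filter is then immediate: it contains $X$; it is upward closed because $C\subseteq U\subseteq V$ forces $C\subseteq V$; and it is closed under meets because $C\subseteq U$ and $C\subseteq V$ give $C\subseteq U\cap V$.

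The crux is showing that $F_\mathcal{C}$ is Boolean, i.e. $D(\mathcal{U}(X))\subseteq F_\mathcal{C}$. Here I would compute pseudocomplements in $\mathcal{U}(X)$ via Esakia duality: for a clopen up-set $U$ one has $-U = X\setminus\da{U}$, whence $U$ is dense exactly when $\da{U}=X$. If $U$ is dense and $x\in\max(X)$, then $x\in\da{U}$ yields some $u\in U$ with $x\leq u$, and maximality forces $x=u\in U$; thus every dense $U$ satisfies $\max(X)\subseteq U$. Since $C\subseteq\max(X)$ by the definition of an MNE-space, this gives $C\subseteq U$, i.e. $U\in F_\mathcal{C}$. (Equivalently, one may appeal to the characterization recalled just before the definition of MNE-spaces: $\max(X)$ is dual to $D(\mathcal{U}(X))$, and $C$ is a closed up-set with $C\subseteq\max(X)$, so the inclusion-reversing correspondence between closed up-sets and filters returns $D(\mathcal{U}(X))\subseteq F_\mathcal{C}$.) This density step is the only genuinely non-formal point, and it is exactly where the hypothesis $C\subseteq\max(X)$ is essential.

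Finally, condition \ref{F} for $F_\mathcal{C}$ unwinds to precisely \eqref{F*}. Translating through the isomorphism with $\mathcal{U}(X)$, the premises $a\wedge b=\bot$ and $a\vee b\in F$ become $U\cap V=\emptyset$ and $C\subseteq U\cup V$, while the conclusion $\square a\vee\Diamond b\in F$ becomes $C\subseteq\square_{\eta_1}U\cup\Diamond_{\eta_2}V$; this is verbatim the implication \eqref{F*} assumed of $C$. Having verified that $\langle\mathcal{U}(X),\square_{\eta_1},\Diamond_{\eta_2}\rangle$ is a modal Heyting algebra and that $F_\mathcal{C}$ is a Boolean filter satisfying \ref{F}, we conclude $(\langle\mathcal{U}(X),\square_{\eta_1},\Diamond_{\eta_2}\rangle,F_\mathcal{C})\in\mathbb{TW}$. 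The main obstacle, such as it is, lies in bookkeeping the duality identities for the negation $-$ and the operator $\da{\,\cdot\,}$ so that the density argument is airtight; the verifications of the filter laws and of \ref{F} are direct transcriptions of the MNE-space axioms.
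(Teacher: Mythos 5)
Your proof is correct and follows essentially the same route as the paper: the key step in both is that a dense clopen up-set $U$ satisfies $\da{U}=X$ (since $-U=X\setminus\da{U}$), so maximality of points of $C\subseteq\max(X)$ forces $C\subseteq U$, giving $D(\mathcal{U}(X))\subseteq F_\mathcal{C}$, while condition \eqref{F} is a direct transcription of \eqref{F*}. The only difference is that you spell out the routine filter axioms and the \eqref{F*}-to-\eqref{F} translation, which the paper leaves implicit.
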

\begin{proof}
    Suppose that $U\in\mathcal{U}(X)$ is a dense element. Then, $X\setminus\da{U}=\emptyset$, i.e., $\da{U}=X$. We will show that $\mathcal{C}\subseteq U$. Let $x\in \mathcal{C}$. Then, there exists $u\in U$ such that $x\leq u$. Since $x\in\max(X)$, $x=u$ and thus $x\in U$. From Condition \ref{F*}, it is immediate that $F_\mathcal{C}$ satisfies Condition \eqref{F}.
\end{proof}


\begin{theorem}
    Let $\mathcal{X} = \langle X, \tau, \leq,\eta_1, \eta_2, C\rangle$ be an MNE-space. Let $\epsilon_X\colon X\to \X{\mathcal{U}(X)}$ be the map defined by $\epsilon_X(x)=\{U\in\mathcal{U}(X):x\in U\}$. Then, $\epsilon_X$ is an ME-homeomorphism that satisfies:
    \[\epsilon_X[C]=\mathcal{C}(F_C).\]
\end{theorem}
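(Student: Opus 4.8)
The plan is to split the claim into its topological half and its set-theoretic half. The assertion that $\epsilon_X$ is an ME-homeomorphism is in fact already contained in Theorem \ref{natep}: there it is shown that $\epsilon_X$ is an Esakia-homeomorphism satisfying $\eta_{\square_{\eta_1}}(\epsilon_X(x))=\{\epsilon_X[U]:U\in\eta_1(x)\}$ and $\eta_{\Diamond_{\eta_2}}(\epsilon_X(x))=\{\epsilon_X[D]:D\in\eta_2(x)\}$, which by the equivalences recorded just before Definition \ref{MEmorphism} are exactly the two neighbourhood conditions required of an ME-morphism. Hence the only genuinely new content is the identity $\epsilon_X[C]=\mathcal{C}(F_C)$, and this is what I would concentrate on.

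The inclusion $\epsilon_X[C]\subseteq\mathcal{C}(F_C)$ is immediate: if $x\in C$ and $U\in F_C$, then $C\subseteq U$ forces $x\in U$, so $U\in\epsilon_X(x)$; thus $F_C\subseteq\epsilon_X(x)$, i.e. $\epsilon_X(x)\in\mathcal{C}(F_C)$. For the reverse inclusion I would use that $\epsilon_X$ is a bijection (part of being a homeomorphism), so any $Q\in\mathcal{C}(F_C)$ equals $\epsilon_X(x)$ for a unique $x\in X$; the task then reduces to showing that $F_C\subseteq\epsilon_X(x)$ implies $x\in C$. Unwinding the definitions, $F_C\subseteq\epsilon_X(x)$ says precisely that $x$ lies in every clopen up-set containing $C$, i.e. $x\in\bigcap F_C$. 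So everything comes down to the identity $C=\bigcap F_C=\bigcap\{U\in\mathcal{U}(X):C\subseteq U\}$.

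The key observation that makes this work is that $C$ is a \emph{closed up-set}: since $C\subseteq\max(X)$, every element of $C$ is maximal, so $c\leq y$ with $c\in C$ forces $c=y$, whence $\ua{C}=C$. Granting this, $C=\bigcap F_C$ is the standard Priestley-duality fact that a closed up-set is the intersection of the clopen up-sets above it. I expect this step to be the main obstacle, and I would establish it directly: if $x\notin C$, then for every $c\in C$ we have $c\not\leq x$ (otherwise $x\in\ua{C}=C$), so the Priestley separation axiom yields, for each $c\in C$, a clopen up-set $U_c$ with $c\in U_c$ and $x\notin U_c$. The family $\{U_c:c\in C\}$ covers the closed --- hence compact --- set $C$, so finitely many of them, say $U_{c_1},\dots,U_{c_n}$, already cover $C$; their union $U=U_{c_1}\cup\cdots\cup U_{c_n}$ is a clopen up-set with $C\subseteq U$ and $x\notin U$, witnessing $x\notin\bigcap F_C$. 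This proves $\bigcap F_C\subseteq C$, and the opposite inclusion is trivial, giving $C=\bigcap F_C$.

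Putting the pieces together: if $Q=\epsilon_X(x)\in\mathcal{C}(F_C)$, then $x\in\bigcap F_C=C$, so $Q\in\epsilon_X[C]$; combined with the easy first inclusion this yields $\epsilon_X[C]=\mathcal{C}(F_C)$, completing the proof.
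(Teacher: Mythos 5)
Your proof is correct, and its skeleton matches the paper's: the ME-homeomorphism claim is delegated to the preceding theorem together with the equivalences recorded before the definition of ME-morphism, the inclusion $\epsilon_X[C]\subseteq\mathcal{C}(F_C)$ is the same one-line argument, and the reverse inclusion is reduced (via surjectivity of $\epsilon_X$) to showing that lying in every clopen up-set containing $C$ forces membership in $C$. The difference is in how that last step is handled. The paper's proof asserts it: it shows $x\in U$ for all $U\in\mathcal{U}(X)$ with $C\subseteq U$ and then concludes ``therefore $x\in C$'', tacitly invoking the standard Priestley-duality fact that a closed up-set is the intersection of the clopen up-sets containing it, and, even more tacitly, the observation that $C$ is an up-set because $C\subseteq\max(X)$. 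You identify this as the crux and prove it from scratch: maximality gives $\ua{C}=C$, so $x\notin C$ implies $c\not\leq x$ for every $c\in C$; Priestley separation then yields clopen up-sets separating each $c$ from $x$, and compactness of the closed set $C$ produces a single clopen up-set $U\supseteq C$ with $x\notin U$, i.e.\ $C=\bigcap F_C$. So your proposal is not merely a rephrasing: it supplies the justification for the one step the paper leaves implicit, which is precisely where the hypotheses that $C$ is closed and contained in $\max(X)$ are actually used.
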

\begin{proof}
    Let $x\in C$. We will prove $F_C\subseteq \epsilon_X(x)$. Let $U\in F_C$. It follows that $x\in C\subseteq U$ and thus $U\in \epsilon_X(x)$. Therefore $\epsilon_X(x)\in \mathcal{C}(F_C)$.

    On the other hand, let $\epsilon_X(x)\in \mathcal{C}(F_C)$. Then, $F_C\subseteq \epsilon_X(x)$. To prove that $x\in C$, we will show that $x\in U$ for all $U\in\mathcal{U}(X)$ such that $C\subseteq U$. Let $C\subseteq U$. Then, $U\in F_C$ and by assumption, $U\in \epsilon_X(x)$, i.e., $x\in U$. Therefore $x\in C$ and $\epsilon_X(x)\in \epsilon_X[C]$.
\end{proof}

\begin{definition}
A map $f \colon X_1\to X_2$	 between two MNE-spaces $\langle X_1,\tau_1,\leq_1,\eta_1,\eta_2,C_1\rangle$ and $\langle X_2,\tau_2,\leq_2,\eta'_1,\eta'_2,C_2\rangle$ is an MNE-morphism iff is an ME-morphism that additionally satisﬁes: $f[C_1]\subseteq C_2$.

\end{definition}

\begin{proposition}
    Let $(\mathbf{M}_1,F_1)\in\mathbb{TW}$ and  $(\mathbf{M}_2,F_2)\in\mathbb{TW}$. Let $h\colon H_1\to H_2$ be a homomorphism of modal Heyting algebras that satisfies $h[F_1]\subseteq F_2$. If $f\colon \X{\He_2}\to\X{\He_1}$ is the function defined by $f(P)=h^{-1}[P]$, then $f$ is a MNE-morphism.
\end{proposition}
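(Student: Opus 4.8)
The plan is to verify the two clauses in the definition of an MNE-morphism: that $f$ is an ME-morphism, and that it carries the distinguished closed set of its domain into that of its codomain. The first clause requires no new work: since $f(P)=h^{-1}[P]$ is exactly the map $X(h)$ and $h$ is a homomorphism of modal Heyting algebras, the earlier proposition establishing that $X(h)$ is an ME-morphism already guarantees that $f\colon \X{\He_2}\to\X{\He_1}$ is an ME-morphism. Thus only the condition on the distinguished closed subsets remains to be checked.

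For the second clause I would first pin down which inclusion is actually required. By Corollary \ref{asMEN-space}, the MNE-space attached to $(\mathbf{M}_2,F_2)$ has distinguished closed set $\mathcal{C}(F_2)$ and the one attached to $(\mathbf{M}_1,F_1)$ has $\mathcal{C}(F_1)$. Because the functor is contravariant and $f$ runs from $\X{\He_2}$ to $\X{\He_1}$, the role of the source closed set (the ``$C_1$'' of the morphism definition) is played by $\mathcal{C}(F_2)$ and that of the target (``$C_2$'') by $\mathcal{C}(F_1)$; hence I must show $f[\mathcal{C}(F_2)]\subseteq \mathcal{C}(F_1)$.

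This last inclusion is a direct computation from the hypothesis $h[F_1]\subseteq F_2$. I would take $P\in\mathcal{C}(F_2)$, i.e. $F_2\subseteq P$, and let $a\in F_1$. Then $h(a)\in h[F_1]\subseteq F_2\subseteq P$, so $a\in h^{-1}[P]=f(P)$. As $a\in F_1$ was arbitrary, $F_1\subseteq f(P)$, that is $f(P)\in\mathcal{C}(F_1)$. Therefore $f[\mathcal{C}(F_2)]\subseteq\mathcal{C}(F_1)$, and combined with the ME-morphism property this shows that $f$ is an MNE-morphism.

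There is essentially no hard obstacle here: the neighbourhood and order compatibility is inherited wholesale from the ME-morphism proposition, and the only genuinely new point is the filter inclusion, which reduces to unwinding the definitions of $\mathcal{C}(\cdot)$ and of $h^{-1}[\cdot]$ together with $h[F_1]\subseteq F_2$. The one thing to state carefully is the contravariant bookkeeping of which closed set plays the source and which the target, so that the required inclusion is written in the correct direction.
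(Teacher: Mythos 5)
Your proposal is correct and follows essentially the same route as the paper: both defer the ME-morphism part to the earlier proposition on $X(h)$ and reduce the claim to the single inclusion $f[\mathcal{C}(F_2)]\subseteq\mathcal{C}(F_1)$, which is obtained by unwinding $h[F_1]\subseteq F_2\subseteq P$ to get $F_1\subseteq h^{-1}[P]$. The only cosmetic difference is that you verify the inclusion element-wise via an arbitrary $a\in F_1$, whereas the paper states the containment of sets directly; your explicit remark about the contravariant direction of the closed-set condition is a nice touch but not a mathematical divergence.
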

\begin{proof}
    We only need to prove that $f[\mathcal{C}(F_2)]\subseteq \mathcal{C}(F_1)$. Let $Q\in f[\mathcal{C}(F_2)]$. Then, there exists $P\in \mathcal{C}(F_2)$ such that $Q=h^{-1}[P]$. By definition, $F_2\subseteq P$, and by assumption, $h[F_1]\subseteq P$. Thus, $F_1\subseteq h^{-1}[P]=Q$ and $Q\in \mathcal{C}(F_1)$. Therefore, $f[\mathcal{C}(F_2)]\subseteq\mathcal{C}(F_1)$.
\end{proof}

\begin{proposition}
    Let $f \colon X_1\to X_2$ be a MNE-morphism	between two MNE-spaces $\langle X_1,\tau_1,\leq_1,\eta_1,\eta_2,C_1\rangle$ and $\langle X_2,\tau_2,\leq_2,\eta'_1,\eta'_2,C_2\rangle$. Then, the map $h\colon \mathcal{U}(X_2)\to\mathcal{U}(X_1)$ defined by $h(U)=f^{-1}[U]$ is a homomorphism of modal Heyting algebras that satisfies $h[F_{C_2}]\subseteq F_{C_1}$.
\end{proposition}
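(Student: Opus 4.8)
The plan is to reduce the bulk of the statement to results already established and then verify only the single genuinely new condition on the filters. First I would observe that, by definition, an MNE-morphism is in particular an ME-morphism; hence the preceding proposition applies verbatim and immediately yields that $h(U) = f^{-1}[U]$ is a well-defined homomorphism of modal Heyting algebras from $\mathcal{U}(X_2)$ to $\mathcal{U}(X_1)$. This disposes of the entire algebraic content of the claim — well-definedness (each $f^{-1}[U]$ is again a clopen up-set, since $f$ is an Esakia function), preservation of the lattice operations and the Heyting implication, and preservation of both modal operators $\square_{\eta_1}$ and $\Diamond_{\eta_2}$ — without any further computation.

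It then remains only to check the filter inclusion $h[F_{C_2}] \subseteq F_{C_1}$. Unwinding the definitions, a typical element $U \in F_{C_2}$ is a clopen up-set of $X_2$ with $C_2 \subseteq U$, and the target condition $h(U) \in F_{C_1}$ amounts to $C_1 \subseteq f^{-1}[U]$. So I would fix an arbitrary $x \in C_1$ and show $f(x) \in U$. Here the defining extra property of an MNE-morphism, namely $f[C_1] \subseteq C_2$, does all the work: it gives $f(x) \in C_2$, and since $C_2 \subseteq U$ we conclude $f(x) \in U$, that is $x \in f^{-1}[U]$. As $x$ was arbitrary this yields $C_1 \subseteq f^{-1}[U]$, i.e.\ $h(U) \in F_{C_1}$, as required.

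I do not expect a genuine obstacle here; the argument is essentially a one-line set-theoretic chain $C_2 \subseteq U \Rightarrow f[C_1] \subseteq U \Rightarrow C_1 \subseteq f^{-1}[U]$. The only points demanding care are bookkeeping ones: one must keep in mind that $h$ is contravariant, so that an up-set living over $X_2$ is pulled back to an up-set over $X_1$, and that each filter $F_{C_i}$ is defined precisely by containment of its distinguished closed set $C_i \subseteq \max(X_i)$. Once the inclusion $f[C_1] \subseteq C_2$ supplied by the MNE-morphism definition is invoked, the verification closes immediately.
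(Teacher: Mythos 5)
Your proposal is correct and matches the paper's own proof: the paper likewise takes the homomorphism part as already established (by the preceding proposition on ME-morphisms) and verifies only the filter inclusion via the same chain $f[C_1]\subseteq C_2\subseteq U$, hence $C_1\subseteq f^{-1}[U]$.
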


\begin{proof}
    Let $V\in h[F_{C_2}]$. Then, there exists $U\in \mathcal{U}(X_2)$ such that $C_2\subseteq U$ and $V=f^{-1}[U]$. By assumptio, $f[C_1]\subseteq C_2\subseteq U$ and it follows that $C_1\subseteq f^{-1}[U]=V$. Thus, $V\in F_{C_1}$.
\end{proof}

\begin{proposition}
    Let $(\mathbf{M},F)\in\mathbb{TW}$. Then, $\sigma_{\He}(a)=\{P\in\X{\He}:a\in P\}$ is a homomorphism of modal Heyting algebras that satisfies $\sigma_\He[F]=F_{\mathcal{C}(F)}$.
\end{proposition}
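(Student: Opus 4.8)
The plan is to split the statement into its two parts: that $\sigma_\He$ is a homomorphism of modal Heyting algebras, and that $\sigma_\He[F] = F_{\mathcal{C}(F)}$. The first part requires no fresh argument, since Theorem~\ref{natis} already shows that $\sigma_\He$ is an isomorphism of modal Heyting algebras from $\langle \He,\square,\Diamond\rangle$ onto $\langle \mathcal{U}(\X{\He}),\square_{\eta_\square},\Diamond_{\eta_\Diamond}\rangle$. In particular it is a homomorphism, and---what will be needed below---it is a bijection onto $\mathcal{U}(\X{\He})$.

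I would establish the equality $\sigma_\He[F] = F_{\mathcal{C}(F)}$ by proving both inclusions. For $\sigma_\He[F] \subseteq F_{\mathcal{C}(F)}$, fix $a \in F$ and an arbitrary $P \in \mathcal{C}(F)$. By definition $F \subseteq P$, so $a \in P$ and hence $P \in \sigma_\He(a)$; as $P$ was arbitrary this yields $\mathcal{C}(F) \subseteq \sigma_\He(a)$, which is precisely the assertion that $\sigma_\He(a) \in F_{\mathcal{C}(F)}$. This direction is a direct unwinding of the definitions of $\mathcal{C}(F)$ and $F_{\mathcal{C}(F)}$.

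For the reverse inclusion $F_{\mathcal{C}(F)} \subseteq \sigma_\He[F]$, take $U \in F_{\mathcal{C}(F)}$ and use surjectivity of $\sigma_\He$ to write $U = \sigma_\He(a)$ for some $a \in H$. The hypothesis $\mathcal{C}(F) \subseteq \sigma_\He(a)$ says exactly that $a$ lies in every prime filter $P$ with $F \subseteq P$. At this point I would invoke the standard fact, underlying the correspondence between filters and closed up-sets recalled at the start of Section~\ref{sec6}, that in a Heyting algebra every filter is the intersection of the prime filters containing it, i.e.\ $F = \bigcap\{P \in \X{\He} : F \subseteq P\}$. It then follows immediately that $a \in F$, so $U = \sigma_\He(a) \in \sigma_\He[F]$.

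The only nontrivial ingredient is the reverse inclusion, which rests entirely on the representation of a filter as the intersection of the prime filters above it; this is a consequence of the prime filter separation theorem (if $a \notin F$ there is a prime filter containing $F$ but omitting $a$). I would emphasize that the extra data carried by an object $(\mathbf{M},F)$ of $\mathbb{TW}$---that $F$ is Boolean and satisfies Condition~\eqref{F}---plays no role in this proof: the identity $\sigma_\He[F] = F_{\mathcal{C}(F)}$ holds for any filter $F$ of $\He$, and the modal operators enter only through Theorem~\ref{natis} to guarantee that $\sigma_\He$ is a modal homomorphism.
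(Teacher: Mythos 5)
Your proof is correct and follows essentially the same route as the paper: both directions are handled by the same unwinding of the definitions of $\mathcal{C}(F)$ and $F_{\mathcal{C}(F)}$, with the homomorphism part delegated to Theorem~\ref{natis}. The only difference is that you make explicit the key fact behind the reverse inclusion (that $F=\bigcap\{P\in\X{\He}:F\subseteq P\}$, by prime filter separation), which the paper compresses into the unjustified step ``It follows that $a\in F$''; your version is the more complete write-up.
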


\begin{proof}
   We only need to prove $\sigma_\He[F]=F_{\mathcal{C}(F)}$. Let $U\in \sigma_\He[F]$. Then, there exists $a\in F$ such that $U=\sigma_\He(a)$. It follows that $\mathcal{C}(F)\subseteq\sigma_\He(a)$ and therefore $U=\sigma_\He(a)\in F_{\mathcal{C}(F)}$.
    On the other hand, let $U=\sigma_\He(a)\in F_{\mathcal{C}(F)}$. Then, $\mathcal{C}(F)\subseteq \sigma_\He(a)$. It follows that $a\in F$ and therefore $U\in \sigma_\He[F]$.
\end{proof}

Let $\mathbb{MNE}$ be the category of MNE-spaces with MNE-functions.
Let $\mathbf{L}\colon \mathbb{TW}\to \mathbb{MNE}$ be the contravariant functor that for each pair $P=(\mathbf{M},F)\in\mathbb{TW}$, $\mathbf{L}(P):=\langle \X{\He},\tau_\He,\subseteq,\eta_\square,\eta_\Diamond,\mathcal{C}(F)\rangle$ and that for each homomorphism of modal Heyting algebras $h\colon H_1\to H_2$ that satisfies $h[F_1]\subseteq F_2$, $\mathbf{L}(h)\colon \X{\He_2}\to\X{\He_1}$ is the MNE-morphism $\mathbf{L}(h):=X(h)$. 

Let $\mathbf{K}\colon \mathbb{MNE}\to \mathbb{TW}$ be the contravariant functor that for each MNE-space $\mathcal{X}=\langle X,\tau,\leq,\eta_1,\eta_2,\mathcal{C}\rangle$, $\mathbf{K}(\mathcal{X}):=(\langle \mathcal{U}(X),\square_{\eta_1},\Diamond_{\eta_2}\rangle,F_\mathcal{C})$ and that for each MNE-morphism $f\colon X_1\to X_2$, $\mathbf{K}(f)\colon \mathcal{U}(X_2)\to \mathcal{U}(X_1)$ is the modal Heyting algebra homomorphism $\mathbf{K}(f):=h(f)$. 
From all the theorems above, we can conclude:

\begin{theorem}
    The categories $\mathbb{MNE}$ and $\mathbb{TW}$ are dually equivalent.
\end{theorem}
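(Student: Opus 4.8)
The plan is to promote the already-established dual equivalence between $\MH$ and $\mathbb{ME}$ to a dual equivalence between $\mathbb{TW}$ and $\mathbb{MNE}$, by checking that the two natural isomorphisms witnessing the former are compatible with the extra data (the Boolean filter $F$ on the algebraic side, the distinguished closed set $C$ on the spatial side). Concretely, I would exhibit the families $\sigma_\He\colon \He\to\mathcal{U}(\X{\He})$ and $\epsilon_X\colon X\to\X{\mathcal{U}(X)}$ as the components of natural isomorphisms $\Id_{\mathbb{TW}}\To \mathbf{K}\circ\mathbf{L}$ and $\Id_{\mathbb{MNE}}\To \mathbf{L}\circ\mathbf{K}$, respectively.

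First I would treat the algebraic side. For each $(\mathbf{M},F)\in\mathbb{TW}$, Theorem \ref{natis} already gives that $\sigma_\He$ is an isomorphism of modal Heyting algebras onto $\langle\mathcal{U}(\X{\He}),\square_{\eta_\square},\Diamond_{\eta_\Diamond}\rangle$, which is precisely the modal Heyting reduct of $\mathbf{K}(\mathbf{L}(\mathbf{M},F))$. The proposition establishing $\sigma_\He[F]=F_{\mathcal{C}(F)}$ then shows that $\sigma_\He$ carries the distinguished filter $F$ exactly onto the filter $F_{\mathcal{C}(F)}$ of $\mathbf{K}(\mathbf{L}(\mathbf{M},F))$; hence $\sigma_\He$ is an isomorphism in $\mathbb{TW}$. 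Dually, for each MNE-space $\mathcal{X}$, Theorem \ref{natep} yields that $\epsilon_X$ is an ME-homeomorphism onto $\X{\mathcal{U}(X)}$, and the preceding theorem shows $\epsilon_X[C]=\mathcal{C}(F_C)$, so $\epsilon_X$ is an isomorphism of MNE-spaces from $\mathcal{X}$ onto $\mathbf{L}(\mathbf{K}(\mathcal{X}))$.

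It then remains to verify naturality. Since $\mathbf{L}(h)=X(h)$ and $\mathbf{K}(f)=h(f)$ are defined on morphisms by exactly the same formulas as in the underlying Esakia–Heyting and $\MH$–$\mathbb{ME}$ dualities, the naturality squares for $\sigma$ and $\epsilon$ coincide with those already known to commute there; the additional filter/closed-set components impose no new commutativity condition, because the morphisms are the identical maps with only an extra inclusion constraint. Thus $\sigma$ and $\epsilon$ are natural isomorphisms, and the composites $\mathbf{K}\circ\mathbf{L}$ and $\mathbf{L}\circ\mathbf{K}$ are naturally isomorphic to the respective identity functors, establishing the dual equivalence.

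The genuinely new content is not this final assembly but rather the four compatibility facts already isolated in the preceding propositions: that $\sigma_\He[F]=F_{\mathcal{C}(F)}$ and $\epsilon_X[C]=\mathcal{C}(F_C)$, and, so that $\mathbf{L}$ and $\mathbf{K}$ are well defined on morphisms, that $X(h)$ carries $\mathcal{C}(F_2)$ into $\mathcal{C}(F_1)$ and $h(f)$ carries $F_{C_2}$ into $F_{C_1}$. I would therefore expect the main (though by now discharged) obstacle to have been the correct translation of Condition \eqref{F} into the spatial Condition \eqref{F*} and the verification that maximal prime filters capture Boolean filters; once these are in place, the theorem follows by gluing the compatibility statements to the known $\MH$–$\mathbb{ME}$ dual equivalence.
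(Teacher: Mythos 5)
Your proposal is correct and follows essentially the same route as the paper, which likewise obtains the dual equivalence by assembling the preceding propositions: well-definedness of $\mathbf{L}$ and $\mathbf{K}$ on objects and morphisms, plus the compatibility facts $\sigma_\He[F]=F_{\mathcal{C}(F)}$ and $\epsilon_X[C]=\mathcal{C}(F_C)$ that upgrade the natural isomorphisms of the $\MH$--$\mathbb{ME}$ duality to the categories with extra structure. In fact the paper states the theorem with no further argument beyond citing those results, so your explicit treatment of the isomorphism conditions in $\mathbb{TW}$ and $\mathbb{MNE}$ and of why no new naturality conditions arise is, if anything, more complete than the paper's own.
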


And finally, we obtain a topological duality for modal Nelson lattices:
\begin{corollary}
    The category $\MN$ is dually equivalent to the category $\mathbb{MNE}$ via the functors $\mathbf{L} \circ\mathbf{F}\colon \mathbb{MN}\to \mathbb{MNE}$ and $\mathbf{E}\circ \mathbf{K}: \mathbb{MNE}\to \MN$.
\end{corollary}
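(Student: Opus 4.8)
The plan is to obtain the stated duality purely formally, by composing the (covariant) equivalence $\MN\simeq\mathbb{TW}$ carried by $\mathbf{F}$ and $\mathbf{E}$ with the dual equivalence $\mathbb{TW}\simeq\mathbb{MNE}$ carried by $\mathbf{L}$ and $\mathbf{K}$. Since $\mathbf{F}$ and $\mathbf{E}$ are covariant while $\mathbf{L}$ and $\mathbf{K}$ are contravariant, each of the composites $\mathbf{L}\circ\mathbf{F}$ and $\mathbf{E}\circ\mathbf{K}$ is contravariant, so the statement to be established is exactly a \emph{dual} equivalence between $\MN$ and $\mathbb{MNE}$.

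First I would collect the natural isomorphisms already produced by the two preceding equivalence theorems: from $\MN\simeq\mathbb{TW}$ the isomorphisms $\mathbf{E}\circ\mathbf{F}\cong\Id_{\MN}$ and $\mathbf{F}\circ\mathbf{E}\cong\Id_{\mathbb{TW}}$, witnessed componentwise by $\alpha_\mathbf{N}(a)=(a^2,(\sim a)^2)$ and $\beta_P(a)=(a,-a)$; and from $\mathbb{TW}\simeq\mathbb{MNE}$ the isomorphisms $\mathbf{K}\circ\mathbf{L}\cong\Id_{\mathbb{TW}}$ and $\mathbf{L}\circ\mathbf{K}\cong\Id_{\mathbb{MNE}}$, witnessed by the maps $\sigma_\He$ (recall $\sigma_\He[F]=F_{\mathcal{C}(F)}$) and $\epsilon_X$ (recall $\epsilon_X[C]=\mathcal{C}(F_C)$). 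These four are exactly the ingredients needed.

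Then I would assemble the two required natural isomorphisms by whiskering. Using associativity of functor composition,
\[
(\mathbf{E}\circ\mathbf{K})\circ(\mathbf{L}\circ\mathbf{F})=\mathbf{E}\circ(\mathbf{K}\circ\mathbf{L})\circ\mathbf{F},
\]
so whiskering $\mathbf{K}\circ\mathbf{L}\cong\Id_{\mathbb{TW}}$ on the left by $\mathbf{E}$ and on the right by $\mathbf{F}$ gives $\mathbf{E}\circ(\mathbf{K}\circ\mathbf{L})\circ\mathbf{F}\cong\mathbf{E}\circ\mathbf{F}$, and composing with $\alpha$ yields a natural isomorphism onto $\Id_{\MN}$; symmetrically,
\[
(\mathbf{L}\circ\mathbf{F})\circ(\mathbf{E}\circ\mathbf{K})=\mathbf{L}\circ(\mathbf{F}\circ\mathbf{E})\circ\mathbf{K},
\]
and whiskering $\mathbf{F}\circ\mathbf{E}\cong\Id_{\mathbb{TW}}$ by $\mathbf{L}$ and $\mathbf{K}$, followed by $\mathbf{L}\circ\mathbf{K}\cong\Id_{\mathbb{MNE}}$, yields a natural isomorphism onto $\Id_{\mathbb{MNE}}$. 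The argument is entirely formal, so there is no substantive obstacle beyond bookkeeping; the one point deserving care is the variance, since $\mathbf{L}$ and $\mathbf{K}$ reverse arrows, one must check that the whiskered transformations are natural in the correct contravariant sense and that the orientations of the four component isomorphisms align so that the two composites are mutually inverse up to coherent natural isomorphism. As all the genuine content was discharged in the two equivalence theorems, the corollary follows.
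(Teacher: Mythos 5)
Your proposal is correct and is exactly the argument the paper intends: the corollary is stated without proof as an immediate consequence of composing the equivalence $\MN\simeq\mathbb{TW}$ (via $\mathbf{F}$, $\mathbf{E}$ with the natural isomorphisms $\alpha$, $\beta$) with the dual equivalence between $\mathbb{TW}$ and $\mathbb{MNE}$ (via $\mathbf{L}$, $\mathbf{K}$ with $\sigma_\He$, $\epsilon_X$). Your explicit whiskering bookkeeping, including the check that contravariance of $\mathbf{L}$ and $\mathbf{K}$ only reverses the direction of the whiskered isomorphisms (harmless since they are isomorphisms), simply makes the paper's implicit reasoning precise.
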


\section{Conclusion and future work}\label{sec10}
In this work, we have extended the representation theorem for Nelson algebras in terms of Heyting algebras to the setting of modal operators. We have identified the minimal condition that a modal Heyting algebra must satisfy in order for the twist construction to be preserved in the modal setting. Moreover, we have shown how every equational extension on the Heyting side can be transferred to the Nelson side by means of $2$-potency. 

We believe that our results can be extended to a more general setting, since the approach is essentially independent of the underlying non-modal algebras. For instance, in \cite{Busaniche2}, the authors establish a representation theorem for a generalization of Nelson residuated lattices in terms of Nelson conucleus algebras. We believe that our results can be adapted to the setting of modal residuated lattices with relatively minor modifications. 

In addition, we would like to apply our twist contruction to a subvariety of Nelson lattices known as nilpotent minimum algebras (for short NM-algebras) which, in addition, satisfy the following equation:
\begin{equation*}
(a * b \to \bot) \vee (a \wedge b \to a * b) = \top 
\end{equation*}
The fact that nilpotent minimum algebras are Nelson residuated lattices has as a consequence that they are representable as a twist structure over G\"odel algebras (see Section 6.3 in \cite{Busaniche}), that are Heyting algebras that satisfy the prelinearity equation:
\[(x\rightarrow y)\vee (y\rightarrow x)=\top.
\]

Taking advantage of this representation, we want to reproduce our results in our current paper to modal NM-algebras and to study the connection with the caracterization given in \cite{CaiRod2015} of modal G\"odel algebras.

\backmatter

\bmhead{Acknowledgements} This research was funded by the National Science Center (Poland), grant number 2020/39/B/HS1/00216 and MOSAIC project from the European Union’s Horizon 2020 research and innovation programme under the Marie Skłodowska-Curie grant agreement No 101007627. The research was also partially supported by the Argentinean projects PIP 112-20200101301CO (CONICET) and PICT-2019-2019-00882 (ANPCyT). Finally, it acknowledges the partial support of Argentinean projects PIP 112-20150100412CO (CONICET) and UBA-CyT-20020190100021BA.

\end{document}